\documentclass{scrartcl}
\usepackage[top=2cm, bottom=2.5cm, left=2.5cm, right=2.5cm]{geometry}

\usepackage[T1]{fontenc}
\usepackage[utf8]{inputenc}
\usepackage{amsmath, amsthm, amssymb}
\usepackage{mathtools}
\usepackage{dsfont}
\usepackage{graphicx}
\usepackage[format=plain]{caption}
\usepackage{capt-of}
\usepackage{subcaption}
\usepackage{bm}
\usepackage{csquotes}
\usepackage{xcolor}
\usepackage{dlfltxbcodetips} 
\usepackage[hidelinks]{hyperref}
\usepackage[section]{algorithm}
\usepackage{algpseudocode}
\usepackage{nicefrac}
\usepackage{array, makecell} 
\usepackage{array, booktabs, tabularx} 
\newcolumntype{C}{>{\centering\arraybackslash}X}
\usepackage{siunitx}
\DeclareSIUnit\calorie{cal}
\DeclareSIUnit\kcal{kcal}

\renewcommand{\d}{\,\mathrm{d}} 
\newcommand{\e}{\mathrm{e}} 
\newcommand{\N}{\mathbb{N}}  

\newcommand{\R}{\mathbb{R}}

\newcommand{\diag}{\operatorname{diag}}
\newcommand{\supp}{\operatorname{supp}}

\newcommand{\diam}{\operatorname{diam}}
\newcommand{\dom}{\operatorname{dom}}

\newcommand{\sgn}{\operatorname{sgn}}
\newcommand{\conv}{\operatorname{conv}}

\newcommand{\eps}{\varepsilon}
\renewcommand{\phi}{\varphi}
\newcommand{\ul}{\underline}
\newcommand{\ol}{\overline}
\newcommand{\tr}{\operatorname{tr}}
\newcommand{\X}{\mathcal{X}}

\newcommand{\Iineq}{I_{\mathrm{ineq}}}
\newcommand{\Ieq}{I_{\mathrm{eq}}}
\newcommand{\Iineqc}{I_{\mathrm{ineq,1}}}
\newcommand{\Iineqnc}{I_{\mathrm{ineq,2}}}
\newcommand{\Ic}{I^{\mathrm{sensi}}_{\mathrm{c}}}
\newcommand{\Inc}{I^{\mathrm{sensi}}_{\mathrm{nc}}}
\newcommand{\Xifin}{\Xi_{\mathrm{fin}}}
\newcommand{\Xifeas}{\Xi_{\mathrm{feas}}}

\newcommand{\Vc}{V_{\mathrm{c}}}
\newcommand{\Vnc}{V_{\mathrm{nc}}}
\newcommand{\fc}{f_{\mathrm{c}}}
\newcommand{\fnc}{f_{\mathrm{nc}}}
\newcommand{\Psiineq}{\Psi_{\mathrm{ineq}}}
\newcommand{\Psieq}{\Psi_{\mathrm{eq}}}
\newcommand{\gequ}{g_{\mathrm{eq}}}
\newcommand{\CD}{\mathrm{CD}}
\newcommand{\B}{\mathcal{B}}
\newcommand{\dimx}{d_\mathrm{x}}
\newcommand{\dimtheta}{d_\mathrm{\theta}}
\newcommand{\dimy}{d_\mathrm{y}}
\newcommand{\psd}{\mathrm{psd}}
\newcommand{\pd}{\mathrm{pd}}
\newcommand{\indfct}{\mathrm{I}}
\newcommand{\signedmsre}{\Xi(X,\R)}
\newcommand{\nonnegmsre}{\Xi(X,[0,\infty))}
\newcommand{\St}{\mathcal{S}}
\newcommand{\T}{\mathcal{T}}
\newcommand{\tfin}{t_{\mathrm{f}}}
\newcommand{\tm}{t_{\mathrm{m}}}
\newcommand{\constr}{\mathrm{constr}}
\newcommand{\ret}{\mathrm{roi}}

\newcommand{\footremember}[2]{%
   \footnote{#2}
    \newcounter{#1}
    \setcounter{#1}{\value{footnote}}%
}
\newcommand{\footrecall}[1]{%
    \footnotemark[\value{#1}]%
} 

\numberwithin{equation}{section}

\newtheorem{thm}{Theorem}[section]
\newtheorem{cor}[thm]{Corollary}

\newtheorem{lm}[thm]{Lemma}
\newtheorem{cond}[thm]{Condition}

\theoremstyle{definition} \newtheorem{algo}[thm]{Algorithm}
\theoremstyle{definition} \newtheorem{ex}[thm]{Example}
\theoremstyle{definition}

\title{An adaptive discretization algorithm for locally optimal experimental design with constraints}

\author{Jochen Schmid\footremember{contribution}{J.~Schmid and P.~Seufert contributed equally to this work.}, Philipp Seufert\footrecall{contribution} , Jan Schwientek, Tobias Seidel,\\ Karl-Heinz K\"ufer
\\\small Fraunhofer Institute for Industrial Mathematics (ITWM), 67663 Kaiserslautern, Germany\\
\small jochen.schmid@itwm.fraunhofer.de}  

\date{}

\begin{document}

\maketitle

\begin{abstract}
\small{ \noindent 
We develop a novel iterative algorithm for locally optimal experimental design under constraints, like budget or performance constraints. It is an adaptive discretization algorithm. In every iteration, a discretized version of the constrained-design problem is solved and then the discretization is adaptively refined by adding an approximate violator of a suitable sufficient $\eps$-optimality condition for the current design. 
We prove that 
with $\eps = 0$, our algorithm converges to an optimal design and that with $\eps > 0$, our algorithm  finitely terminates at an $\eps$-optimal design. 
Compared to the 
existing algorithms on constrained experimental design, our algorithm comes with considerably less computational effort 
because the nonlinear subproblems in our algorithm have a smaller dimension and have to be solved only approximately and only in selected iterations (typically the last few). 
Additionally, our algorithm covers a considerably larger class of constraints. 
We demonstrate the good convergence properties of the  algorithm on experimental design problems from chemical engineering that feature time and yield constraints. 
}
\end{abstract}


\section{Introduction}

A large variety of scientific and industrial applications nowadays rely heavily on 
mathematical models for prediction and optimization. In particular, this is true for chemical process engineering. It is crucial, of course, that the employed models are sufficiently 
precise. In order to obtain precise models, one typically performs experiments and collects data, to which the model is then fitted. Yet, performing experiments is usually time- and cost-intensive. It is therefore important to find experimental plans, or designs, which are maximally informative, that is, lead to the highest possible model precision. 
Coming up with such maximally informative experimental designs is the core task of unconstrained optimal experimental design, and there is a tremendous amount of literature on such unconstrained design problems. See the textbooks~\cite{Silvey, Pu, AtDoTo, FeLe, PrPa} and the survey paper~\cite{DuAt25} for a general overview of the theory, algorithms, and applications of unconstrained design and~\cite{Fe, Wy70, At73, SiTiTo78, Bo86, Yu10, Yu11, YaBiTa13, HaFiRi20, ScSeBo} for 
selected algorithms to solve unconstrained design problems. In mathematical terms, unconstrained locally optimal design problems can be cast in the form 
\begin{align} \label{eq:unconstrained-design-problem-intro}
	\min_{\xi\in\Xi(X)} \Psi_0(\xi).
\end{align}
In this optimization problem, $X$ is the so-called design space, that is, the set of possible individual experiments, $\Xi(X)$ is the set of 
approximate (as opposed to exact) experimental designs on $X$, that is, probability measures on the Borel sigma-algebra $\B_X$ of $X$, and $\Psi_0: \Xi(X) \to \R \cup \{\infty\}$ is the so-called design criterion, that is, a function that measures the inverse information content of the design $\xi$ about the model parameters~\cite{FeLe, PrPa, Pu, ScSeBo, Silvey}. 
Virtually all relevant design criteria are convex functions of the design and therefore the unconstrained design problems~\eqref{eq:unconstrained-design-problem-intro} are 
infinite-dimensional convex optimization problems. 
\smallskip

In many applications, however, finding experimental designs that are just maximally informative is not enough. Instead, one often has to additionally ensure that the proposed experimental design satisfies certain budget or performance constraints. Some typical examples are that the time or material costs required for carrying out the experiments in the proposed experimental design have to be less than some given thresholds or that the purity of a substance produced in the experiments of the proposed experimental design is high enough such that it can be re-used after the experiment. Coming up with experimental designs that are maximally informative and at the same time respect the relevant constraints is the task of constrained optimal experimental design. As is demonstrated in~\cite{CoFe95, FeLe}, the constraints in many applications are convex or even affine functions of the design and therefore a large variety of 
constrained locally optimal design problems can be cast in the form
\begin{align}
	\label{eq:constrained-design-problem-intro}
	\min_{\xi\in\Xi(X)} \Psi_0(\xi)
	\quad \text{s.t.} \quad 
	\Psi_i(\xi) \le 0 \text{ for all } i \in \Iineq \text{ and } \Psi_i(\xi) = 0 \text{ for all } i \in \Ieq
\end{align}
with convex inequality constraint functions $\Psi_i: \Xi(X) \to \R \cup \{\infty\}$ for $i \in \Iineq$ and affine equality constraint functions $\Psi_i: \Xi(X) \to \R$ for $i \in \Ieq$. 
Compared to unconstrained design, however, constrained design problems~\eqref{eq:constrained-design-problem-intro} -- and especially dedicated algorithms for their numerical solution -- are much less studied in the literature. 
Specifically, there are some few papers proposing algorithms for constrained design problems~\eqref{eq:constrained-design-problem-intro} with relatively general, finitely many constraints~\cite{CoFe95, Ga86, MoZu02}. 
Additionally, there are a few papers investigating constrained design problems~\eqref{eq:constrained-design-problem-intro} with so-called bound constraints~\cite{Fe89, MoZu04, Pr04, HeLe20} (that is, infinitely many affine inequality constraints of the specific form $\xi(E) \le \omega(E)$ for all $E \in \B_X$ with a reference design $\omega$) or with so-called marginal constraints~\cite{CoTh80, HuHs93, CoFe95}  (that is, infinitely many affine equality constraints of the specific form $\xi(X \times F) = \omega(F)$ for all $F \in \B_Y$, where $Y$ is the set of variables that are not controllable in experiments and $\omega$ is a reference design).
\smallskip

In the present paper, we develop a novel algorithm to solve constrained design problems of the form~\eqref{eq:constrained-design-problem-intro} with finitely many constraints. It comes with considerably less computational effort and, at the same time, covers a considerably broader setting than the algorithms from~\cite{CoFe95, Ga86, MoZu02}. 
%
Similarly to the 
unconstrained-design algorithms from~\cite{YaBiTa13, ScSeBo}, the algorithm proposed here is an adaptive discretization algorithm. In every iteration, it proceeds in the following two steps. 
In the first step, a saddle point of a discretized version of the constrained design problem~\eqref{eq:constrained-design-problem-intro} is computed, namely, a saddle point $(\xi^k,\lambda^k)$ of the finite-dimensional 
convex optimization problem 
\begin{align}
	\label{eq:discretized-constrained-design-problem-intro}
	\min_{\xi\in\Xi(X^k)} \Psi_0(\xi)
	\quad \text{s.t.} \quad 
	\Psi_i(\xi) \le 0 \text{ for all } i \in \Iineq \text{ and } \Psi_i(\xi) = 0 \text{ for all } i \in \Ieq
\end{align}
that arises  from~\eqref{eq:constrained-design-problem-intro} by replacing the infinite design space $X$ by a suitable discretization $X^k$. 
In the second step, the discretization $X^k$ 
is adaptively refined to 
\begin{align}
	X^{k+1} = X^k \cup \{x^k\}
\end{align}
by adding one point, namely an appropriate approximate violator $x^k$ of a suitable sufficient $\eps$-optimality condition at $(\xi^k,\lambda^k)$. In essence, 
this sufficient $\eps$-optimality condition consists in a lower-bound condition on the sensitivity function $\psi^L$ of the Lagrangian $L$  of~\eqref{eq:constrained-design-problem-intro}, namely
\begin{align}
	\label{eq:eps-optimality-condition-intro}
	\psi^L(\xi^k,\lambda^k,x) \ge -\eps \qquad (x \in X).
\end{align}
In analogy to unconstrained design, this sensitivity function, or more precisely, the function $x \mapsto \psi^L(\xi^k,\lambda^k,x)$, is the integral kernel of the directional derivatives of $L$ at $(\xi^k,\lambda^k)$ in the directions $\eta - \xi^k$. An approximate violator of~\eqref{eq:eps-optimality-condition-intro}, in turn, is a point $x^k \in X$ with
\begin{align}
	\label{eq:approximate-violator-intro}
	\psi^L(\xi^k,\lambda^k,x^k) < -\eps + \ul{\delta}_k 
\end{align} 
for some $\ul{\delta}_k \ge 0$. 
%
In the first version of our algorithm -- called the special algorithm in the following -- the point $x^k$ is computed 
as a $\ul{\delta}_k$-approximate solution of the sensitivity minimization problem
\begin{align}
	\label{eq:sensi-minimization-problem-intro}
	\min_{x \in X} \psi^L(\xi^k,\lambda^k,x)
\end{align}
in every iteration. It should be noticed that this typically 
requires global-optimization techniques because $x \mapsto \psi^L(\xi^k,\lambda^k,x)$ is non-convex and multimodal in most applications. 
%
In the second version of our algorithm -- called the general algorithm in the following -- we therefore restrict 
the computation of $\ul{\delta}_k$-approximate solutions
of the sensitivity minimization problems~\eqref{eq:sensi-minimization-problem-intro} to as few iterations as possible. 
Specifically, the general algorithm first performs a local search for an arbitrary violator $x^k$ of~\eqref{eq:eps-optimality-condition-intro} in every iteration, and 
only in iterations where the search finds no violator 
is 
the point $x^k$ computed as before as a $\ul{\delta}_k$-approximate solution of~\eqref{eq:sensi-minimization-problem-intro}. 
\smallskip

We will show that the special algorithm with optimality tolerance $\eps = 0$ in~\eqref{eq:eps-optimality-condition-intro} converges to an optimal design for~\eqref{eq:constrained-design-problem-intro}. We will further show that the general algorithm with optimality tolerance $\eps > 0$ in~\eqref{eq:eps-optimality-condition-intro} terminates after finitely many iterations at an $\eps$-optimal design for~\eqref{eq:constrained-design-problem-intro}.
%
All we need to establish these convergence and termination results are standard lower semicontinuity and standard directional differentiability assumptions on the objective and constraint functions, plus some mild constraint qualification in terms of the initial discretization $X^0$ of $X$. Specifically, we need that the initial discretized design problem, that is, problem~\eqref{eq:discretized-constrained-design-problem-intro} with $k = 0$, is strictly feasible and that the image of the initial design-measure space $\Xi(X^0)$ under the equality constraints is not one-sided w.r.t.~$0$ (in the sense that it does not lie completely on one side of a hyperplane in $\R^{\Ieq}$ through $0$).
\smallskip

Compared to the algorithms developed in~\cite{CoFe95, Ga86, MoZu02}, the algorithmic approach proposed here offers several clear 
advantages.
%
A first advantage of our algorithmic approach over~\cite{CoFe95, Ga86, MoZu02} is its generality.  Indeed, it covers general convex constraints and also allows for non-continuous sensitivity functions.
In contrast, the algorithms 
formulated in~\cite{CoFe95, MoZu02} cover only affine (inequality) constraints~\cite{CoFe95} or, respectively, only affine equality constraints~\cite{MoZu02}. Additionally, while the algorithm from~\cite{Ga86} in principle covers also convex inequality constraints, the boundedness of the multiplier sequence that is necessary for the convergence of the algorithm is established, again, only in the special case of affine inequality constraints. And finally, the sensitivity functions of the affine constraints in~\cite{CoFe95, Ga86, MoZu02} are always assumed to be continuous, while in our approach the affine inequality constraints 
are allowed to be non-continuous. 
%
A second important advantage of our algorithms over~\cite{CoFe95, Ga86, MoZu02} is that they require substantially less computational effort in connection with their nonlinear subproblems.
Indeed, the nonlinear sensitivity minimization problems~\eqref{eq:sensi-minimization-problem-intro} of our algorithms have a considerably smaller dimension  than the nonlinear subproblems of~\cite{CoFe95} and~\cite{MoZu02} (namely, the dimension $\dimx$ of $X$ versus $(m+1) (\dimx + 1)$ and $(m+1) \dimx$, respectively, with $m$ being the number of the considered affine constraints in~\cite{CoFe95} or \cite{MoZu02}, respectively).
Additionally, the algorithms from~\cite{CoFe95, Ga86, MoZu02} all require exact solutions of their nonlinear subproblems in every iteration, whereas our algorithms require only approximate solutions of their nonlinear subproblems~\eqref{eq:sensi-minimization-problem-intro} and, in the case of our general algorithm, only in iterations where the local search for violators of~\eqref{eq:eps-optimality-condition-intro} fails. 
%
A third advantage of our algorithms over~\cite{CoFe95, Ga86, MoZu02} are their good convergence properties.
Indeed, our algorithms typically progress quickly towards the optimal criterion value (in the case of the special algorithm) or an $\eps$-optimal criterion value (in the case of the general algorithm). 
In essence, this can be explained by the fact that in our algorithms the design is re-optimized in every iteration. In fact, this is in complete analogy to the unconstrained-design algorithms from~\cite{YaBiTa13, ScSeBo} which converge at least linearly by virtue of~\cite{ScSeBo}. In contrast, the algorithms from~\cite{CoFe95, Ga86, MoZu02} are generally prone to slow convergence because in the unconstrained special case they all reduce to the classical vertex-direction algorithm~\cite{Fe, Wy70}. And the latter algorithm, in turn, generally converges slowly~\cite{PrPa} -- in fact, at most sublinearly~\cite{CaCu68, Ja13} in general -- and is prone to zig-zagging vertex-direction 
changes near the optimum~\cite{LaJa15}. 
\smallskip


We illustrate the good convergence properties and the wide applicability of our algorithm on  constrained design problems with two kinds of models. In the first set of design problems, the underlying model is an explicitly defined stationary model, namely an exponential growth model, while in the second set of design problems, the underlying model is an implicitly defined dynamic model from chemical engineering, namely a reaction kinetics model. As we will see, our  algorithm converges fast in all these examples, requiring only few iterations until termination at an $\eps$-optimal design with $\eps = 10^{-3}$. 
Also, we will see that the concrete verification of the regularity conditions of our convergence and termination results is straightforward.

\section{Setting and preliminaries}

In this section, we formally introduce the constrained  design problems considered in this paper, establish their solvability as well as sufficient and necessary optimality conditions for their solutions.
%
%
In this paper, we consider constrained locally optimal experimental design problems of the form
\begin{align} \label{eq:COED(X)}
	\min_{\xi \in \Xi(X)} \Psi_0(\xi) \quad \text{s.t.} \quad \Psi_i(\xi) \le 0 \text{ for all } i \in \Iineq \text{ and } \Psi_i(\xi) = 0 \text{ for all } i \in \Ieq,
\end{align}
where $\Psi_0: \Xi(X) \to \R \cup \{\infty\}$ is an appropriate design criterion and $\Psi_i: \Xi(X) \to \R \cup \{\infty\}$ for $i \in \Iineq$ and $\Psi_i: \Xi(X) \to \R$ for $i \in \Ieq$ are finitely many inequality or equality constraint functions. See~\cite{BuSc24, FeLe, PrPa, ScSeBo} for detailed motivations of this formulation of the experimental-design objective and constraints. 
With the abbreviation
\begin{align} \label{eq:Xifeas(X)-definition}
	\Xifeas(X) := 
	\{\xi \in \Xi(X): \Psi_i(\xi) \le 0 \text{ for all } i \in \Iineq \text{ and } \Psi_i(\xi) = 0 \text{ for all } i \in \Ieq \},
\end{align}
the considered constrained experimental design problem~\eqref{eq:COED(X)} can be recast in the following more concise form:
\begin{align} \label{eq:COED(X)-short}
\min_{\xi \in \Xifeas (X)} \Psi_0(\xi).
\end{align}
In the following, we will sometimes refer to this design problem on $X$ by the symbol $\CD(X)$ for the sake of brevity.
A design $\xi^* \in \Xi(X)$ is called an \emph{$\eps$-optimal solution of~\eqref{eq:COED(X)}} or an \emph{$\eps$-optimal design for~\eqref{eq:COED(X)}}  for some given $\eps \in [0,\infty)$ iff 
\begin{align}
	\xi^* \in \Xifeas(X)
	\qquad \text{and} \qquad
	\Psi_0(\xi^*) \le \inf_{\xi\in\Xifeas(X)} \Psi_0(\xi) + \eps.
\end{align} 
Clearly, a design $\xi^* \in \Xi(X)$ is an \emph{optimal solution of~\eqref{eq:COED(X)}} iff it is an $\eps$-optimal solution with $\eps := 0$. 
As usual, the domain of an extended real-valued function $\Psi: \Xi(X) \to \R \cup \{\infty\}$ is defined to be the 
set 
\begin{align}
\dom \Psi := \{\xi \in \Xi(X): \Psi(\xi) < \infty\}
\end{align}
of input points for which $\Psi$ takes a finite value. In the entire paper, the sets of positive semidefinite or, respectively, positive definite $d \times d$ matrices will be denoted by
\begin{align}
	\R_\psd^{d \times d}
	\qquad \text{and} \qquad
	\R_\pd^{d \times d}.
\end{align}

\subsection{Solvability of the constrained design problem}

In this section, we establish the existence of optimal solutions for~\eqref{eq:COED(X)} and, in particular, of optimal solutions with finitely many support points. In order to do so, we need to impose some very basic continuity and semicontinuity assumptions. In the entire paper, (semi)continuity of a function $\Psi: \Xi(X) \to \R \cup \{\infty\}$ is always understood w.r.t.~the weak topology on $\Xi(X)$, that is, the topology induced by convergence in distribution. It is well-known by Prohorov's theorem (Theorem~II.6.4 in~\cite{Parthasarathy}) that for every compact metric space $X$, the space $\Xi(X)$ endowed with the weak topology is a compact metrizable space itself. In particular, $\Xi(X)$ is sequentially compact then. See, for instance, Lemma 2.10 of~\cite{ScSeBo} for details. 

\begin{cond} \label{cond:objective-and-constraint-fcts}
$X$ is a non-empty compact metric space, $\Iineq$, $\Ieq$ are disjoint finite sets, $I := \Iineq \cup \Ieq \not\ni 0$, and the design criterion and the constraint functions satisfy the following properties: 
\begin{itemize}
\item[(i)] $\Psi_0: \Xi(X) \to \R \cup \{\infty\}$ is a lower semicontinuous mapping 
\item[(ii)] $\Psi_i: \Xi(X) \to \R \cup \{\infty\}$ for every $i \in \Iineq$ is a lower semicontinuous mapping 
\item[(iii)] $\Psi_i: \Xi(X) \to \R$ for every $i \in \Ieq$ is a continuous mapping
\end{itemize}
\end{cond}

With these continuity and lower semicontinuity assumptions and the compactness of $\Xi(X)$, the existence of optimal solutions for~\eqref{eq:COED(X)} follows by standard arguments. 

\begin{thm} \label{thm:solvability}
If Condition~\ref{cond:objective-and-constraint-fcts} is satisfied and $\Xifeas(X) \ne \emptyset$, then $\Xifeas(X)$ is compact and the design problem~\eqref{eq:COED(X)} has a solution. 
\end{thm}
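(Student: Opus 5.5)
The plan is to prove the two claims in turn: first that $\Xifeas(X)$ is a closed subset of the compact space $\Xi(X)$, and then that $\Psi_0$ attains its infimum on it. Throughout we use that $\Xi(X)$ with the weak topology is compact and metrizable (Prohorov's theorem, as recalled above). In a metrizable space, closedness and compactness are equivalent to their sequential versions, so every step can be done with sequences.

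\emph{Closedness.} We write
\begin{align}
\Xifeas(X) = \bigcap_{i \in \Iineq} \{\xi \in \Xi(X): \Psi_i(\xi) \le 0\} \cap \bigcap_{i \in \Ieq} \Psi_i^{-1}(\{0\}).
\end{align}
For $i \in \Iineq$, the sublevel set $\{\Psi_i \le 0\}$ is closed because $\Psi_i$ is lower semicontinuous by Condition~\ref{cond:objective-and-constraint-fcts}(ii). Sequentially: if $\xi_n \to \xi$ and $\Psi_i(\xi_n) \le 0$, then $\Psi_i(\xi) \le \liminf_n \Psi_i(\xi_n) \le 0$. For $i \in \Ieq$, the set $\Psi_i^{-1}(\{0\})$ is the preimage of a closed set under the continuous real-valued map $\Psi_i$ from Condition~\ref{cond:objective-and-constraint-fcts}(iii), so it is closed. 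A finite intersection of closed sets is closed, and since $\Iineq$ and $\Ieq$ are finite, $\Xifeas(X)$ is closed. A closed subset of a compact space is compact, which proves the first claim.

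\emph{Existence of a minimizer.} Set $m := \inf_{\xi \in \Xifeas(X)} \Psi_0(\xi) \in [-\infty,\infty]$; the infimum is over a non-empty set by assumption. If $m = \infty$, every feasible design is optimal and we are done. Otherwise, we pick a minimizing sequence $(\xi_n)$ in $\Xifeas(X)$ with $\Psi_0(\xi_n) \to m$. By sequential compactness of $\Xifeas(X)$, some subsequence converges to a limit $\xi^* \in \Xifeas(X)$. Lower semicontinuity of $\Psi_0$ (Condition~\ref{cond:objective-and-constraint-fcts}(i)) then gives $\Psi_0(\xi^*) \le \liminf \Psi_0(\xi_{n_k}) = m$. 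Hence $\Psi_0(\xi^*) = m$, and in particular $m > -\infty$ because $\Psi_0$ takes values in $\R \cup \{\infty\}$. So $\xi^*$ solves~\eqref{eq:COED(X)}.

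There is no substantial obstacle here. The only points that need care are that the extended value $\infty$ is handled correctly in the lower semicontinuity arguments, and that metrizability justifies using sequences in place of nets.
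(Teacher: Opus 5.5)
Your proposal is correct and follows the paper's proof essentially step for step. Closedness of $\Xifeas(X)$ comes from lower semicontinuity of the inequality constraints and continuity of the equality constraints, which gives compactness; a minimizing sequence then has a convergent subsequence, and lower semicontinuity of $\Psi_0$ makes its limit a solution. Your separate treatment of the cases $m = \infty$ and $m = -\infty$ spells out details the paper leaves implicit.
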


\begin{proof}
It immediately follows by the lower semicontinuity of the inequality constraint functions and the continuity of the equality constraint functions that $\Xifeas(X)$ is a closed subset of the compact metric space $\Xi(X)$ and hence $\Xifeas(X)$ is compact as well. It further follows by the lower semicontinuity of the objective function $\Psi_0$ and by the compactness of $\Xifeas(X)$ that every minimizing sequence $(\xi^n)$ for~\eqref{eq:COED(X)} accumulates at a solution $\xi^*$ of~\eqref{eq:COED(X)}. In particular, \eqref{eq:COED(X)} does have a solution, as desired.
\end{proof}

Apart from the continuity and lower semicontinuity assumptions above, we will also need some convexity, affinity and  directional differentiability assumptions. In the spirit of~\cite{FeLe}, 
we call the restriction $\Psi|_{\dom\Psi}$ of a function $\Psi: \Xi(X) \to \R \cup \{\infty\}$ \emph{directionally differentiable w.r.t.~$\Xi(X)$} iff there exists a function $\psi: \dom \Psi \times X \to \R$ such that for every $\xi \in \dom \Psi$ the function $\psi(\xi,\cdot)$ is $\eta$-integrable for every $\eta \in \Xi(X)$ and 
\begin{align} \label{eq:directional-differentiability-def}
\frac{\Psi(\xi+\alpha(\eta-\xi)) - \Psi(\xi)}{\alpha} \longrightarrow \int_X \psi(\xi,x)\d\eta(x) \qquad (\alpha \searrow 0).
\end{align} 
In other words, $\Psi|_{\dom\Psi}$ is directionally differentiable w.r.t.~$\Xi(X)$ iff for every point $\xi \in \dom \Psi$ and every $\eta \in \Xi(X)$, the directional derivative $\partial_{\eta-\xi}\Psi(\xi)$ of $\Psi$ at $\xi$ in the direction $\eta-\xi$ exists in $\R$ in the usual sense~\cite{Jahn} and is of the specific integral form
\begin{align} \label{eq:directional-derivative-integral-form}
\partial_{\eta-\xi}\Psi(\xi) = \int_X \psi(\xi,x)\d\eta(x).
\end{align}
It should be noted that if $\Psi|_{\dom\Psi}$ is directionally differentiable w.r.t.~$\Xi(X)$, then there is only one function~$\psi$ satisfying the requirements from the above definition. (In order to see this, just consider \eqref{eq:directional-differentiability-def} with the point measures $\eta := \delta_x$ for $x \in X$.) Similarly to~\cite{FeLe}, we then call this function $\psi$ the \emph{sensitivity function of $\Psi|_{\dom\Psi}$ w.r.t.~$\Xi(X)$}. At first glance, the additional requirement that the directional derivatives $\Psi|_{\dom\Psi}$ be of the specific integral form~\eqref{eq:directional-derivative-integral-form} might seem restrictive. Yet, for almost all standard design criteria $\Psi_0$ and constraint functions $\Psi_i$, this additional requirement is satisfied. See Lemma~\ref{lm:sufficient-conditions-for-standard-assumptions} below, in conjunction with Proposition~2.12 and Example~2.23  of~\cite{ScSeBo}, for instance. 

\begin{cond} \label{cond:sensi-fcts}
\begin{itemize}
\item[(i)] $\Psi_0$ is convex and $\Psi_0|_{\dom \Psi_0}$ is directionally differentiable w.r.t.~$\Xi(X)$ with a continuous sensitivity function $\psi_0: \dom \Psi_0 \times X \to \R$
\item[(ii)] $\Psi_i$ is convex for every $i \in \Iineq$ and $\Psi_i|_{\dom \Psi_i}$ 
is directionally differentiable w.r.t.~$\Xi(X)$ with a sensitivity function $\psi_i: \dom \Psi_i \times X \to \R$ satisfying one of the following properties:
\begin{itemize}
\item[(a)] $\psi_i$ is continuous, or
\item[(b)] there exists a bounded measurable function $g_i: X \to \R$ such that $\psi_i(\xi,x) = g_i(x)-\Psi_i(\xi)$ for all $(\xi,x) \in \dom \Psi_i \times X$ 
\end{itemize}
\item[(iii)] $\Psi_i$ is affine for every $i \in \Ieq$.
\end{itemize}
\end{cond}

We now give simple sufficient conditions for the Conditions~\ref{cond:objective-and-constraint-fcts} and \ref{cond:sensi-fcts} to be satisfied. In all our applications, the design criterion $\Psi_0$ will be of the particular form $\Psi_0(\xi) = \Phi(M(\xi))$ with $M(\xi) = \int_X m(x)\d\xi(x)$ 
and with a suitable design criterion $\Phi$ on the set $\R^{d \times d}_{\mathrm{psd}}$ of positive semidefinite matrices. 

\begin{cond} \label{cond:sufficient-conditions-for-standard-assumptions}
$X$ is a compact metric space and $\Iineqc, \Iineqnc, \Ieq$ are pairwise disjoint finite sets and $I := \Iineqc \cup \Iineqnc \cup \Ieq\not\ni 0$. Also, $\Psi_i: \Xi(X) \to \R \cup \{\infty\}$ for $i \in \{0\} \cup I$ are mappings satisfying the following properties: 
\begin{itemize}
\item[(i)] $\Psi_i(\xi) = \Phi_i(M_i(\xi))$ for $i \in \{0\} \cup \Iineqc$ and $\xi \in \Xi(X)$, where $\Phi_i: \R^{d_i \times d_i}_{\mathrm{psd}} \to \R \cup \{\infty\}$ is a lower semicontinuous convex mapping and
\begin{align}
M_i(\xi) := \int_X m_i(x) \d\xi(x) \qquad (\xi \in \Xi(X))
\end{align}
with some $m_i \in C(X,\R^{d_i \times d_i}_{\mathrm{psd}})$ and $d_i \in \N$
\item[(ii)] $\Psi_i(\xi) = \int_X g_i(x)\d\xi(x)$ for $i \in \Iineqnc$ and $\xi \in \Xi(X)$, where $g_i:X \to \R$ is a bounded and lower semicontinuous function
\item[(iii)] $\Psi_i(\xi) = \int_X g_i(x)\d\xi(x)$ for $i \in \Ieq$ and $\xi \in \Xi(X)$, where $g_i:X \to \R$ is a continuous function.
\end{itemize}
\end{cond}

\begin{lm} \label{lm:sufficient-conditions-for-standard-assumptions}
If Condition~\ref{cond:sufficient-conditions-for-standard-assumptions} is satisfied, then Condition~\ref{cond:objective-and-constraint-fcts} is satisfied with $\Iineq := \Iineqc \cup \Iineqnc$. If, in addition, $\Phi_i|_{\dom\Phi_i}$ is continuously differentiable and $\dom\Phi_i$ is open in $\R^{d_i\times d_i}$ for every $i \in \{0\} \cup \Iineqc$, then Condition~\ref{cond:sensi-fcts} is satisfied as well and the sensitivity functions $\psi_i$ are given by
\begin{gather}
\psi_i(\xi,x) = D\Phi_i(M_i(\xi))(m_i(x) - M_i(\xi)) \qquad (i \in \{0\} \cup \Iineqc)
\label{eq:sensi-fct-formula-for-Iineqc}\\
\psi_i(\xi,x) = g_i(x) - \Psi_i(\xi) \qquad (i \in \Iineqnc \cup \Ieq)
\end{gather}
for all $(\xi,x) \in \dom\Psi_i \times X$, where $D\Phi_i(M)$ denotes the derivative of $\Phi_i|_{\dom\Phi_i}$ at $M$. Additionally, $\dom\Psi_i = \Xi(X)$ for all $i \in \Iineqnc \cup \Ieq$. 
\end{lm}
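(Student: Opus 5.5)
The plan is to verify the two parts separately: first the (semi)continuity needed for Condition~\ref{cond:objective-and-constraint-fcts}, then convexity and directional differentiability for Condition~\ref{cond:sensi-fcts}.

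\textbf{Condition~\ref{cond:objective-and-constraint-fcts}.} For $i \in \{0\}\cup\Iineqc$, the map $\xi \mapsto M_i(\xi)$ is weakly continuous, since every entry of $m_i$ is a bounded continuous function on $X$. Moreover $M_i(\xi)$ is positive semidefinite, as an integral of psd matrices. Hence $\Psi_i = \Phi_i\circ M_i$ is lower semicontinuous as the composition of an lsc map with a continuous one. For $i\in\Ieq$, continuity of $\xi\mapsto\int g_i\d\xi$ is the definition of weak convergence. For $i\in\Iineqnc$, lower semicontinuity of $\xi\mapsto\int g_i\d\xi$ for bounded lsc $g_i$ is the portmanteau theorem. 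Concretely, write $g_i$ as the increasing pointwise limit of continuous functions $g_i^n$ (Baire's theorem on metric spaces; the $g_i^n$ can be taken bounded below by $\inf g_i$). Then $\int g_i\d\xi = \sup_n\int g_i^n\d\xi$ by monotone convergence, and a supremum of continuous functions is lsc. Boundedness of the $g_i$ also gives finiteness, so $\dom\Psi_i = \Xi(X)$ for $i\in\Iineqnc\cup\Ieq$.

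\textbf{Condition~\ref{cond:sensi-fcts}: convexity and affinity.} $M_i$ is affine on the convex set $\Xi(X)$ and $\Phi_i$ is convex, so $\Psi_i$ is convex. The integral functionals are affine, which covers (iii) and the convexity part of (ii)(b).

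\textbf{Directional derivatives of the linear functionals.} For $\Psi(\xi)=\int g\d\xi$, the difference quotient is exactly $\int g\d\eta - \int g\d\xi$ for every $\alpha$. This equals $\int_X (g(x)-\Psi(\xi))\d\eta(x)$ because $\eta$ is a probability measure, which gives the stated formula and (ii)(b).

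\textbf{Directional derivatives of the nonlinear functionals.} Let $i\in\{0\}\cup\Iineqc$ and $\xi\in\dom\Psi_i$, so $M_i(\xi)\in\dom\Phi_i$. Since $\dom\Phi_i$ is open, $M_i(\xi+\alpha(\eta-\xi)) = M_i(\xi)+\alpha(M_i(\eta)-M_i(\xi))$ stays in $\dom\Phi_i$ for small $\alpha$. Differentiability of $\Phi_i$ then gives the limit $D\Phi_i(M_i(\xi))(M_i(\eta)-M_i(\xi))$. By linearity of $D\Phi_i(M)$ and of the integral, this equals $\int_X D\Phi_i(M_i(\xi))(m_i(x)-M_i(\xi))\d\eta(x)$. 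The integrand is continuous in $x$, hence $\eta$-integrable, which is \eqref{eq:sensi-fct-formula-for-Iineqc}.

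\textbf{Main obstacle: joint continuity of $\psi_i$.} It remains to show that $\psi_i$ is continuous on $\dom\Psi_i\times X$ with the product of the weak topology and the metric topology. Take $(\xi^n,x^n)\to(\xi,x)$. Then $M_i(\xi^n)\to M_i(\xi)$, and $D\Phi_i(M_i(\xi^n))\to D\Phi_i(M_i(\xi))$ by continuous differentiability on the open set $\dom\Phi_i$ (for large $n$ the arguments lie in it). Also $m_i(x^n)\to m_i(x)$. The evaluation $(L,A)\mapsto L(A)$ is continuous in finite dimensions, so $\psi_i(\xi^n,x^n)\to\psi_i(\xi,x)$. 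Since $\Xi(X)$ is metrizable, sequential continuity suffices.
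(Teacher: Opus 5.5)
Your proof is correct and takes the same route as the paper. The paper cites the two nontrivial pieces and calls the rest straightforward: Lemma~2.22 of \cite{ScSeBo} for directional differentiability with a continuous sensitivity function when $i\in\{0\}\cup\Iineqc$, and the portmanteau theorem for lower semicontinuity when $i\in\Iineqnc$. Your segment-wise chain-rule computation, the joint-continuity argument for $\psi_i$, and the Baire monotone-approximation proof simply prove those cited steps directly.
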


\begin{proof}
In order to see the directional differentiability of $\Psi_i$ for $i \in \Iineqc$ with a continuous sensitivity function of the form~\eqref{eq:sensi-fct-formula-for-Iineqc}, we can invoke Lemma~2.22 of~\cite{ScSeBo}. In order to see the lower semicontinuity of $\Psi_i$ for $i \in \Iineqnc$, we can use Alexandrov's portmanteau theorem 
in the version of~\cite{DupuisEllis} (Theorem~A.3.12), for instance. All other assertions are straightforward to verify.
\end{proof}

While the design criterion $\Psi_0$ and the inequality constraint functions $\Psi_i$ need not necessarily be of the form from Condition~\ref{cond:sufficient-conditions-for-standard-assumptions}~(i) and~(ii), the equality constraint functions $\Psi_i$ are necessarily of the form from Condition~\ref{cond:sufficient-conditions-for-standard-assumptions}~(iii). 

\begin{lm} \label{lm:equality-constr-fcts-have-continuous-sensi-fct}
If Conditions~\ref{cond:objective-and-constraint-fcts} and~\ref{cond:sensi-fcts} are satisfied, then for every $i \in \Ieq$ there exists a $g_i \in C(X,\R)$ such that
\begin{align} \label{eq:equality-constr-fcts-have-continuous-sensi-fct-1}
\Psi_i(\xi) = \int_X g_i(x) \d\xi(x) \qquad (\xi \in \Xi(X)).
\end{align}
In particular, $\Psi_i$ for every $i \in \Ieq$ is directionally differentiable w.r.t.~$\Xi(X)$ with a continuous sensitivity function $\psi_i: \Xi(X) \times X \to \R$ given by
\begin{align} \label{eq:equality-constr-fcts-have-continuous-sensi-fct-2}
\psi_i(\xi,x) = g_i(x) - \Psi_i(\xi) \qquad ((\xi,x) \in \Xi(X) \times X).
\end{align}
\end{lm}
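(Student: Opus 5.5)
The natural candidate is to define $g_i(x) := \Psi_i(\delta_x)$ for $x \in X$. I would then show that this function is continuous and represents $\Psi_i$ on all of $\Xi(X)$.

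\textbf{Continuity of $g_i$.} Continuity of $g_i$ on $X$ follows from Condition~\ref{cond:objective-and-constraint-fcts}~(iii). Indeed, $x_n \to x$ in the metric space $X$ implies $\delta_{x_n} \to \delta_x$ weakly, since $\int f \d\delta_{x_n} = f(x_n) \to f(x) = \int f \d\delta_x$ for every $f \in C(X,\R)$. Hence $g_i(x_n) = \Psi_i(\delta_{x_n}) \to \Psi_i(\delta_x) = g_i(x)$. Being continuous on a compact space, $g_i$ is bounded, so $\xi \mapsto \int_X g_i \d\xi$ is well defined and weakly continuous.

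\textbf{Agreement on finitely supported designs.} By affinity (Condition~\ref{cond:sensi-fcts}~(iii)), $\Psi_i(\alpha\xi + (1-\alpha)\eta) = \alpha\Psi_i(\xi) + (1-\alpha)\Psi_i(\eta)$ for $\alpha \in [0,1]$. By induction over the number of support points, this extends to finite convex combinations. Hence $\Psi_i(\xi) = \sum_j w_j g_i(x_j) = \int_X g_i \d\xi$ for every finitely supported $\xi = \sum_j w_j \delta_{x_j}$.

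\textbf{Extension to all designs.} This is the main step. Finitely supported probability measures are weakly dense in $\Xi(X)$ for compact metric $X$. To see this, cover $X$ by finitely many Borel sets of diameter less than $1/n$, partition it into such sets $A_{n,1},\dots,A_{n,m_n}$, pick $x_{n,j} \in A_{n,j}$, and set $\xi_n := \sum_j \xi(A_{n,j})\delta_{x_{n,j}}$. Then $|\int f\d\xi_n - \int f \d\xi| \le \omega_f(1/n) \to 0$ for $f \in C(X,\R)$ by uniform continuity. Both $\Psi_i$ and $\xi\mapsto\int g_i\d\xi$ are weakly continuous and agree on this dense set, so they agree everywhere. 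This gives~\eqref{eq:equality-constr-fcts-have-continuous-sensi-fct-1}.

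\textbf{Sensitivity function.} Finally, for $\xi,\eta \in \Xi(X)$ and $\alpha \in (0,1]$, linearity of the integral gives
\begin{align*}
\frac{\Psi_i(\xi+\alpha(\eta-\xi)) - \Psi_i(\xi)}{\alpha} = \int_X g_i \d\eta - \Psi_i(\xi) = \int_X \bigl(g_i(x) - \Psi_i(\xi)\bigr)\d\eta(x),
\end{align*}
using that $\eta$ is a probability measure. So the difference quotient is constant in $\alpha$. This is exactly~\eqref{eq:directional-differentiability-def} with $\psi_i$ as in~\eqref{eq:equality-constr-fcts-have-continuous-sensi-fct-2}, and $\dom \Psi_i = \Xi(X)$ since $\Psi_i$ is real-valued.

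It remains to check continuity of $\psi_i$ on $\Xi(X)\times X$. This holds because $(\xi,x)\mapsto g_i(x)-\Psi_i(\xi)$ is the difference of two functions, each continuous in one factor of the product topology.
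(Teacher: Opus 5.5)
Your proof is correct and follows the paper's argument: define $g_i(x) := \Psi_i(\delta_x)$, get its continuity from Condition~\ref{cond:objective-and-constraint-fcts}~(iii), use affinity on finitely supported designs, and extend to all of $\Xi(X)$ by density and continuity. The only differences are that you prove the weak density of finitely supported designs yourself, whereas the paper cites Theorem~II.6.3 of~\cite{Parthasarathy}, and that you write out the sensitivity-function computation, which the paper calls trivial.
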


\begin{proof}
Suppose that Conditions~\ref{cond:objective-and-constraint-fcts} and~\ref{cond:sensi-fcts} are satisfied and define the functions $g_i$ by
\begin{align} \label{eq:equality-constr-fcts-have-continuous-sensi-fct-def}
g_i(x) := \Psi_i(\delta_x) \qquad (x \in X)
\end{align}
for $i \in \Ieq$. 
It is then straightforward to verify from the assumed continuity of $\Psi_i: \Xi(X) \to \R$ (Condition~\ref{cond:objective-and-constraint-fcts}~(iii)) that $g_i \in C(X,\R)$ for every $i \in \Ieq$. It is further clear by the assumed affinity of $\Psi_i$ (Condition~\ref{cond:sensi-fcts}~(iii)) that 
\begin{align} \label{eq:equality-constr-fcts-have-continuous-sensi-fct-finite-support}
\Psi_i(\xi) = \Psi_i\bigg(\sum_{x\in\supp\xi} \lambda_x \delta_x\bigg) = \sum_{x\in\supp\xi} \lambda_x \Psi_i(\delta_x) = \int_X g_i(x)\d\xi(x)
\end{align} 
holds true for every $\xi = \sum_{x\in\supp\xi} \lambda_x \delta_x \in \Xi(X)$ with finite support. 
Since the probability measures of finite support are dense in $\Xi(X)$ (Theorem~II.6.3 of~\cite{Parthasarathy}), the equality~\eqref{eq:equality-constr-fcts-have-continuous-sensi-fct-finite-support} extends to arbitrary $\xi \in \Xi(X)$ by continuity of $\Psi_i$. So, \eqref{eq:equality-constr-fcts-have-continuous-sensi-fct-1} is proven, and with \eqref{eq:equality-constr-fcts-have-continuous-sensi-fct-1} at hand, \eqref{eq:equality-constr-fcts-have-continuous-sensi-fct-2} and  the continuity of $\psi_i$ are trivial to prove.
\end{proof}

We now show that for design criteria and constraint functions of the form from Condition~\ref{cond:sufficient-conditions-for-standard-assumptions} (with the non-continuous $g_i$ additionally assumed to take only finitely many values), there exists an optimal solution to~\eqref{eq:COED(X)} with finite support. We also give an upper bound $d$ on the number of support points, see~\eqref{eq:opt-design-with-finite-support} below.

\begin{lm} \label{lm:opt-design-with-finite-support}
Suppose that $f_1: X \to V_1$ is a continuous function and $f_2: X \to V_2$ is a (componentwise) lower semicontinuous function with finitely many values, where $X$ is a compact metric space and $V_1$ is a $d_1$-dimensional vector space over $\R$ and $V_2 = \R^{d_2}$ with some $d_1, d_2 \in \N$. Then for every $\xi^* \in \Xi(X)$, there exists a design $\eta^* \in \Xi(X)$ with at most $d_1 + d_2 + 1$ support points such that
\begin{align} \label{eq:opt-design-with-finite-support-lm}
\int_X f_1 \d\eta^* = \int_X f_1 \d\xi^* 
\qquad \text{and} \qquad
\int_X f_2 \d\eta^* \le \int_X f_2 \d\xi^*. 
\end{align}
\end{lm}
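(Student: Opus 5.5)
Since $f_2$ takes only finitely many values, we split $X$ into the finitely many level sets of $f_2$ and reduce to a Carathéodory argument on each piece. Let $v_1,\dots,v_r \in \R^{d_2}$ be the distinct values of $f_2$, and set $A_j := f_2^{-1}(v_j)$. These sets are Borel and form a partition of $X$. We may not be able to use the sets $A_j$ themselves directly, because they need not be closed; lower semicontinuity is what lets us replace them by compact sets. The plan is to build $\eta^*$ by moving the mass of $\xi^*$ onto points that preserve $\int f_1$ exactly and do not increase $\int f_2$.

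\textbf{Step 1 (reduction to moment points).} Consider the vector $z := \bigl(\int_X f_1\d\xi^*, \int_X f_2\d\xi^*\bigr) \in V_1 \times \R^{d_2}$, a space of dimension $d_1+d_2$. We claim that $z$ lies in $\conv\{(f_1(x), f_2(x)) : x \in X\} + (\{0\}\times[0,\infty)^{d_2})$. This is enough: the set $C := \{(f_1(x),f_2(x)) + (0,t): x\in X, t \ge 0\}$ allows a Carathéodory-type representation of $z$ as a convex combination of at most $d_1+d_2+1$ points of the form $(f_1(x_k), f_2(x_k)) + (0,t_k)$. Setting $\eta^* := \sum_k \lambda_k \delta_{x_k}$ then gives $\int f_1\d\eta^* = \int f_1\d\xi^*$ and $\int f_2\d\eta^* = z_2 - \sum\lambda_k t_k \le z_2$. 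To keep the count at $d_1+d_2+1$ and not more, we apply Carathéodory in the $(d_1+d_2)$-dimensional space to the convex hull of the set $G := \{(f_1(x),f_2(x))\}$ itself, after first finding a point $z' \in \conv G$ with $z' \le z$ componentwise in the $f_2$ part and equal in the $f_1$ part.

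\textbf{Step 2 (the point $z'$; the main obstacle).} The obstacle is that $\int f\d\xi^*$ lies in the closed convex hull of $G$, whereas Carathéodory needs the convex hull. We handle this on each level set. Write $\xi^* = \sum_j \xi^*(A_j)\,\mu_j$, where $\mu_j$ is the normalized restriction of $\xi^*$ to $A_j$ (for $\xi^*(A_j)>0$). On the level set, $\int f_2\d\mu_j = v_j$, and $\int f_1\d\mu_j$ lies in the closed convex hull of $f_1(A_j)$, which is contained in $\conv f_1(\ol{A_j})$. That last set is compact because $\ol{A_j}$ is compact and $f_1$ is continuous, and in finite dimensions the convex hull of a compact set is compact.

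By lower semicontinuity of $f_2$ and finiteness of its range, $f_2^{-1}(\{v: v\le v_j\})$ is closed: it is a finite intersection over the components of the closed sets $\{f_{2,l}\le v_{j,l}\}$. Hence for $x \in \ol{A_j}$ we have $f_2(x) \le v_j$ componentwise. Therefore $\int f_1\d\mu_j = \sum_k \beta_k f_1(y_k)$ with $y_k \in \ol{A_j}$ and $\sum_k \beta_k f_2(y_k) \le v_j$. Combining these representations over $j$ with the weights $\xi^*(A_j)$ yields a finitely supported measure $\zeta$ with $\int f_1\d\zeta = \int f_1\d\xi^*$ and $\int f_2\d\zeta \le \int f_2\d\xi^*$. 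The vector $z' := \int (f_1,f_2)\d\zeta$ therefore lies in $\conv G$.

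\textbf{Step 3 (reducing the support).} Apply Carathéodory's theorem in the $(d_1+d_2)$-dimensional space $V_1\times\R^{d_2}$ to $z' \in \conv G$. This writes $z'$ as a convex combination of at most $d_1+d_2+1$ points $(f_1(x_k),f_2(x_k))$. Setting $\eta^* := \sum_k\lambda_k\delta_{x_k}$ gives
\begin{align*}
\int_X f_1\d\eta^* = \int_X f_1\d\xi^*, \qquad \int_X f_2\d\eta^* = \int_X f_2\d\zeta \le \int_X f_2\d\xi^*,
\end{align*}
which is~\eqref{eq:opt-design-with-finite-support-lm}.
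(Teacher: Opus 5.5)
Your argument is correct, but it handles the central difficulty differently from the paper. That difficulty is that $\int_X f\d\xi^*$ is a priori only in the \emph{closed} convex hull of $f(X)$.

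The paper does not decompose $\xi^*$. Instead:
\begin{itemize}
\item It partitions the range $f(X)$ into finitely many cells $C_1\times\{c_2\}$ with $\diam C_1\to 0$.
\item It replaces $f$ by simple functions $f^n$ with values in $f(X)$ and $f_2^n=f_2$, so that $\int_X f^n\d\xi^*$ is literally a finite convex combination of points of $f(X)$.
\item It applies Carath\'eodory for each $n$ to get points $x_i^n$ and weights $\lambda_i^n$, then passes to a convergent subsequence in $X^{d_1+d_2+1}$ times the simplex.
\item Continuity of $f_1$ and dominated convergence give the equality. Lower semicontinuity of $f_2$ is used only at this limit stage, where the limit points $x_i^*$ may leave their level sets but $f_2$ can only drop.
\end{itemize}

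You instead work level set by level set. The barycenter of $\mu_j$ lies in the closed convex hull of $f_1(A_j)$ (a standard separation argument, which you use without proof), hence in the compact set $\conv f_1(\overline{A_j})$. Lower semicontinuity then enters only through the closedness of $\{f_2\le v_j\}\supset\overline{A_j}$.

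This route avoids the construction of the covers, the dominated-convergence step and the subsequence extraction, and it makes transparent where each hypothesis is used. The price is the barycenter fact and a second Carath\'eodory reduction, since your intermediate measure $\zeta$ may have up to $r(d_1+1)$ atoms.

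Two minor remarks:
\begin{itemize}
\item Closedness of $\{f_2\le v_j\}$ needs only lower semicontinuity. The finite range is used solely to get finitely many level sets.
\item The detour announced in Step~1 is unnecessary. Carath\'eodory applied to $C=G+(\{0\}\times[0,\infty)^{d_2})$, which lies in the same $(d_1+d_2)$-dimensional space, also yields at most $d_1+d_2+1$ points once you know $z\in\conv C$, and that is exactly what Step~2 shows.
\end{itemize}
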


\begin{proof}
In the entire proof, we write $f(x) := (f_1(x),f_2(x))$ and $V := V_1 \times V_2$. Since $f_1$ is bounded and $f_2$ takes only finitely many values, for every $n \in \N$ there exists a finite cover $\mathcal{C}_n$ of the range $f(X)$ that is of the form
\begin{align} \label{eq:opt-design-with-finite-support-lm-step1}
\mathcal{C}_n = \{C_1 \times \{c_2\}: C_1 \in \mathcal{C}_{1n} \text{ and } c_2 \in f_2(X) \}
\end{align}
and that consists of pairwise disjoint measurable sets whose diameters go to $0$ as $n \to \infty$, that is, 
\begin{align} \label{eq:opt-design-with-finite-support-lm-step2}
\sup_{C_1 \in \mathcal{C}_{1n}} \diam(C_1) \longrightarrow 0 \qquad (n\to\infty).
\end{align}
In order to see this, we can take an arbitrary coordinate mapping (linear isomorphism) $\kappa: V_1 \to \R^{d_1}$, choose $R \in (0,\infty)$ so large that $\kappa(f_1(X)) \subset [-R,R)^{d_1}$ and define $\mathcal{C}_{1n}$ to be the following finite cover of $f_1(X)$:
\begin{align*}
\mathcal{C}_{1n} := 
\Big\{\kappa^{-1}(Q): Q = \bigtimes_{i=1}^{d_1} [-R+\frac{k_i}{2^n} 2R, -R+\frac{k_i+1}{2^n} 2R) \text{ with } k_1, \dots, k_{d_1} \in \{0,\dots,2^n-1\} \Big\}.
\end{align*}
With the covers $\mathcal{C}_n$ at hand, we now define the functions $f^n: X \to V$ by
\begin{align} \label{eq:opt-design-with-finite-support-lm-step3}
f^n := \sum_{C \in \mathcal{C}_n \text{ s.t. } f^{-1}(C) \ne \emptyset} f(x^n_C) \cdot \mathrm{I}_{f^{-1}(C)},
\end{align}
where $x^n_C$ for every $C \in \mathcal{C}_n$ with $f^{-1}(C) \ne \emptyset$ is chosen such that
\begin{align} \label{eq:opt-design-with-finite-support-lm-step4}
f(x_C^n) \in C. 
\end{align}
Since $f$ is measurable by assumption and $\mathcal{C}_n$ consists of finitely many measurable sets by construction, $f^n$ is a measurable function with finitely many values. Since, moreover, the sets in $\mathcal{C}_n$ are pairwise disjoint and cover $f(X)$, for every $x \in X$ and every $n \in \N$ there exists a unique element $C := C_{f(x)}^n \in \mathcal{C}_n$ such that $C \ni f(x)$, in short:
\begin{align} \label{eq:opt-design-with-finite-support-lm-step5}
C_{f(x)}^n \in \mathcal{C}_n \qquad \text{and} \qquad C_{f(x)}^n \ni f(x).
\end{align}
Consequently, we have
\begin{align} \label{eq:opt-design-with-finite-support-lm-step6}
f^n(x) = f(x^n_{C_{f(x)}^n}) \qquad (x \in X, n \in \N)
\end{align}
and, in particular,
\begin{align} \label{eq:opt-design-with-finite-support-lm-step7}
f^n(X) \subset f(X) \qquad (n\in\N) 
\qquad \text{and} \qquad
\sup_{x\in X,n\in \N} |f_1^n(x)| \le \sup_{x\in X}|f_1(x)| < \infty.
\end{align}
Combining \eqref{eq:opt-design-with-finite-support-lm-step4} \eqref{eq:opt-design-with-finite-support-lm-step5} and \eqref{eq:opt-design-with-finite-support-lm-step6}, we further conclude that for given $x \in X$ and $n \in \N$ the values $f^n(x)$ and $f(x)$ both belong to $C_{f(x)}^n = C_{1f(x)}^n \times \{c_{2f(x)}^n\}$. And therefore, for every $x \in X$ we have
\begin{align}
|f_1^n(x)-f_1(x)| &\le \diam C_{1f(x)}^n \longrightarrow 0 \qquad (n\to\infty)
\label{eq:opt-design-with-finite-support-lm-step8}\\
f_2^n(x) &= c_{2f(x)}^n = f_2(x) \qquad (n \in \N)
\label{eq:opt-design-with-finite-support-lm-step9}
\end{align} 
by virtue of~\eqref{eq:opt-design-with-finite-support-lm-step2}.
With these preparations at hand, we can now conclude the proof of the lemma. As an immediate consequence of the finiteness of $f^n(X)$ and of~(\ref{eq:opt-design-with-finite-support-lm-step7}.a), we see that
\begin{align} \label{eq:opt-design-with-finite-support-lm-step10}
\int_X f^n \d\xi^* \in \conv f(X) \qquad (n\in\N).
\end{align}
And so, by Carath\'{e}odory's theorem, we conclude that $\int_X f^n \d\xi^*$ for every $n \in \N$ can be written as a convex combination of at most $d := \dim V + 1 = d_1 + d_2 + 1$ elements from $f(X)$, that is, there are $x_1^n,\dots, x_d^n \in X$ and $\lambda_1^n, \dots, \lambda_d^n \in [0,1]$ with
\begin{align} \label{eq:opt-design-with-finite-support-lm-step11}
\int_X f^n \d\xi^* = \sum_{i=1}^{d} \lambda_i^n f(x_i^n)
\qquad \text{and} \qquad 
\sum_{i=1}^n \lambda_i^n = 1 \qquad (n\in\N). 
\end{align}
As a consequence of~(\ref{eq:opt-design-with-finite-support-lm-step7}.b) and \eqref{eq:opt-design-with-finite-support-lm-step8} and \eqref{eq:opt-design-with-finite-support-lm-step9}, we further see by the dominated convergence theorem that
\begin{align} \label{eq:opt-design-with-finite-support-lm-step12}
\int_X f_1 \d\xi^* = \lim_{n\to\infty} \int_X f_1^n \d\xi^*
\qquad \text{and} \qquad
\int_X f_2 \d\xi^* = \int_X f_2^n \d\xi^* \qquad (n\in\N).
\end{align}
Combining now~\eqref{eq:opt-design-with-finite-support-lm-step11} with~\eqref{eq:opt-design-with-finite-support-lm-step12} and passing to a subsequence $(n_k)$ such that the sequences $(x_i^{n_k})$ and $(\lambda_i^{n_k})$ become convergent, we conclude by the continuity of $f_1$ and the (componentwise) lower semicontinuity of $f_2$ that
\begin{align} \label{eq:opt-design-with-finite-support-lm-step13}
\int_X f_1 \d\xi^* = \sum_{i=1}^{d} \lambda_i^* f_1(x_i^*)
\qquad \text{and} \qquad
\int_X f_2 \d\xi^* \ge \sum_{i=1}^{d} \lambda_i^* f_2(x_i^*),
\end{align}
where $\lambda_i^*$ and $x_i^*$ denote the limits of $(x_i^{n_k})$ and $(\lambda_i^{n_k})$, of course. 
Setting $\eta^* := \sum_{i=1}^d \lambda_i^* \delta_{x_i^*}$, the assertion of the lemma is then clear by~(\ref{eq:opt-design-with-finite-support-lm-step11}.b) and~\eqref{eq:opt-design-with-finite-support-lm-step13}. 
\end{proof}

\begin{cor} \label{cor:opt-design-with-finite-support}
Suppose that Condition~\ref{cond:sufficient-conditions-for-standard-assumptions} is satisfied and that, in addition, the functions $g_i$ for $i \in \Iineqnc$ take only finitely many values. Suppose further that $\Xifeas(X) \ne \emptyset$ and let $\eps \in [0,\infty)$. Then the design problem~\eqref{eq:COED(X)} has an $\eps$-optimal  solution with at most 
\begin{align} \label{eq:opt-design-with-finite-support}
d := \sum_{i\in I_1} \frac{d_i(d_i+1)}{2} + |I_2| + |I_3| + 1
\end{align} 
support points, where $I_1$ is a minimal subset of $\{0\} \cup \Iineqc$ with $\{m_i: i \in I_1\} = \{m_i: i \in \{0\} \cup \Iineqc\}$ and where $I_2 := \Iineqnc$ and $I_3 := \Ieq$. 
\end{cor}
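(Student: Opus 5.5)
Start from an optimal design: Lemma~\ref{lm:sufficient-conditions-for-standard-assumptions} shows that Condition~\ref{cond:objective-and-constraint-fcts} holds, so Theorem~\ref{thm:solvability} gives a solution $\xi^* \in \Xifeas(X)$ of~\eqref{eq:COED(X)}. The plan is to replace $\xi^*$ by a finitely supported design with the same criterion and equality-constraint values and with no larger values of the non-continuous inequality constraints. Such a design is again feasible and optimal, hence $\eps$-optimal for every $\eps \ge 0$. This is exactly what Lemma~\ref{lm:opt-design-with-finite-support} provides once $f_1$ and $f_2$ are chosen suitably.

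Define $V_1 := \prod_{i\in I_1} \mathbb{S}^{d_i} \times \R^{I_3}$, where $\mathbb{S}^{d_i}$ is the space of symmetric $d_i\times d_i$ matrices, of dimension $d_i(d_i+1)/2$. Set $f_1(x) := ((m_i(x))_{i\in I_1}, (g_i(x))_{i\in I_3})$. This is continuous because $m_i \in C(X,\R^{d_i\times d_i}_\psd)$ and $g_i \in C(X,\R)$ for $i\in\Ieq$. Next set $V_2 := \R^{I_2}$ and $f_2(x) := (g_i(x))_{i\in I_2}$, which is componentwise lower semicontinuous with finitely many values by assumption. Then $\dim V_1 + \dim V_2 + 1$ equals the $d$ in~\eqref{eq:opt-design-with-finite-support}. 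Lemma~\ref{lm:opt-design-with-finite-support} yields $\eta^*$ with at most $d$ support points satisfying the following:
\begin{itemize}
\item $M_i(\eta^*) = M_i(\xi^*)$ for all $i \in I_1$;
\item $\int g_i \d\eta^* = \int g_i\d\xi^*$ for $i\in\Ieq$;
\item $\int g_i\d\eta^* \le \int g_i\d\xi^*$ for $i \in \Iineqnc$.
\end{itemize}

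Now verify feasibility and optimality. By minimality of $I_1$, every $i \in \{0\}\cup\Iineqc$ has some $j\in I_1$ with $m_i = m_j$, so $M_i(\eta^*) = M_j(\eta^*) = M_j(\xi^*) = M_i(\xi^*)$. Hence $\Psi_i(\eta^*) = \Phi_i(M_i(\eta^*)) = \Psi_i(\xi^*)$ for all $i\in\{0\}\cup\Iineqc$. In particular $\Psi_0(\eta^*) = \Psi_0(\xi^*)$ and the convex inequality constraints remain satisfied. The equality constraints are preserved exactly, and for $i\in\Iineqnc$ we get $\Psi_i(\eta^*) \le \Psi_i(\xi^*) \le 0$. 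Thus $\eta^* \in \Xifeas(X)$ and $\Psi_0(\eta^*) = \inf_{\Xifeas(X)}\Psi_0$, as required.

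The only real subtlety is the dimension count. One must take values of $m_i$ in the symmetric matrices, a $d_i(d_i+1)/2$-dimensional subspace, rather than in all of $\R^{d_i\times d_i}$, which is why the lemma was stated for a general finite-dimensional $V_1$. One must also merge identical $m_i$ through $I_1$, so that no coordinates are double counted.
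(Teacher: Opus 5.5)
Your proposal is correct and is essentially the paper's proof. It uses the same $f_1=\fc$ and $f_2=\fnc$ on $\Vc$ and $\Vnc$, the same application of Lemma~\ref{lm:opt-design-with-finite-support}, and the same check of feasibility and equal criterion value; the only difference is that you start from an exactly optimal rather than an arbitrary $\eps$-optimal $\xi^*$, which even yields a finitely supported optimal design. One small slip: the existence of $j\in I_1$ with $m_i=m_j$ follows from the set equality $\{m_i: i\in I_1\}=\{m_i: i\in\{0\}\cup\Iineqc\}$, not from minimality, which only serves to keep the count at $d$.
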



\begin{proof}
Since $\Xifeas(X) \ne \emptyset$, there exists an $\eps$-optimal solution $\xi^* \in \Xifeas(X)$ to~\eqref{eq:COED(X)} by virtue of Theorem~\ref{thm:solvability} and Lemma~\ref{lm:sufficient-conditions-for-standard-assumptions}. 
We now define $\Vc := \bigtimes_{i\in I_1} \R^{d_i \times d_i}_{\mathrm{sym}} \times \R^{I_3}$ and $\Vnc := \R^{I_2}$ and let 
\begin{align} \label{eq:opt-design-with-finite-support-step1}
\fc(x) := \big( (m_i(x))_{i\in I_1}, (g_i(x))_{i\in I_3})\big) \in \Vc
\qquad \text{and} \qquad
\fnc(x) := (g_i(x))_{i\in I_2} \in \Vnc
\end{align}
for $x \in X$. It is then clear by our assumptions 
that $\fc: X \to \Vc$ is a continuous function and that $\fnc: X \to \Vnc$ is a lower semicontinuous function with finitely many values. So, by Lemma~\ref{lm:opt-design-with-finite-support}, there exists a design $\eta^* \in \Xi(X)$ with at most
\begin{align} \label{eq:opt-design-with-finite-support-step2}
\dim \Vc + \dim \Vnc + 1 = \sum_{i\in I_1} \frac{d_i(d_i+1)}{2} + |I_2| + |I_3| + 1 = d
\end{align}
support points and with
\begin{align} \label{eq:opt-design-with-finite-support-step3}
\int_X \fc \d\eta^* = \int_X \fc \d\xi^*
\qquad \text{and} \qquad 
\int_X \fnc \d\eta^* \le \int_X \fnc \d\xi^*.
\end{align}
As a consequence of~\eqref{eq:opt-design-with-finite-support-step1} and~\eqref{eq:opt-design-with-finite-support-step3}, we conclude that
\begin{gather}
M_i(\eta^*) = M_i(\xi^*) \qquad (i \in \{0\} \cup \Iineqc),
\qquad
\int_X g_i \d\eta^* \le \int_X g_i \d\xi^* \qquad (i \in \Iineqnc), 
\label{eq:opt-design-with-finite-support-step4}\\
\int_X g_i \d\eta^* = \int_X g_i \d\xi^* \qquad (i \in \Ieq),
\label{eq:opt-design-with-finite-support-step5}
\end{gather}
where for the first equality we used that for every $i \in \{0\} \cup \Iineqc$ there is an $i_0 \in I_1$ with $m_i = m_{i_0}$. Since $\xi^* \in \Xifeas(X)$ as an $\eps$-optimal solution is in particular feasible for ~\eqref{eq:COED(X)}, it follows from \eqref{eq:opt-design-with-finite-support-step4} and~\eqref{eq:opt-design-with-finite-support-step5} that
\begin{align} \label{eq:opt-design-with-finite-support-step6}
\eta^* \in \Xifeas(X) \qquad \text{and} \qquad \Psi_0(\eta^*) = \Psi_0(\xi^*).
\end{align}  
And therefore $\eta^*$ is an $\eps$-optimal solution to~\eqref{eq:COED(X)} which by~\eqref{eq:opt-design-with-finite-support-step2} has at most $d$ support points, as desired. 
\end{proof}

\subsection{Characterization of optimal designs}

In this section, we characterize the solutions of~\eqref{eq:COED(X)} in terms of saddle points of~\eqref{eq:COED(X)}, that is, saddle points of the Lagrange function of~\eqref{eq:COED(X)}. 

\subsubsection{Sufficient optimality conditions}

In this paragraph, we show that every saddle point $(\xi^*,\lambda^*)$ of~\eqref{eq:COED(X)} gives rise to a solution $\xi^*$ of~\eqref{eq:COED(X)} (Lemma~\ref{lm:saddle-points-yield-solutions}). 
Additionally, we establish the sufficient $\eps$-optimality condition indicated in the introduction (Corollary~\ref{cor:minimum-of-sensi-fct-determines-optimality-gap}). 
Whenever Condition~\ref{cond:objective-and-constraint-fcts} is satisfied, we will denote the Lagrange function of~\eqref{eq:COED(X)} by $L$. In other words, the function $L: \Xifin(X) \times \Lambda \to \R$ is defined by
\begin{gather} 
	L(\xi,\lambda) := \Psi_0(\xi) + \sum_{i\in I} \lambda_i \Psi_i(\xi) 
	\qquad ((\xi,\lambda) \in \Xifin(X) \times \Lambda),
	\label{eq:Lagrange-fct-def}\\
	\Xifin(X) := \dom \Psi_0 \cap \bigcap_{i\in I} \dom\Psi_i 
	\qquad \text{and} \qquad
	\Lambda := [0,\infty)^{\Iineq} \times \R^{\Ieq}.
	\label{eq:Xifin(X)-and-Lambda-def}
\end{gather} 
In view of the definition~\eqref{eq:Xifeas(X)-definition}  of $\Xifeas(X)$, it is clear that
\begin{align}
	\Xifin(X) \cap \Xifeas(X) = \dom \Psi_0 \cap \Xifeas(X).
\end{align} 
As usual, a saddle point of~\eqref{eq:COED(X)} (or of $L$) is defined to be a point $(\xi^*,\lambda^*) \in \Xifin(X) \times \Lambda$ such that
\begin{align}
L(\xi^*,\lambda) \le L(\xi^*,\lambda^*) \le L(\xi,\lambda^*) 
\end{align}
for every $\xi \in \Xifin(X)$ and every $\lambda \in \Lambda$. It is straightforward to show and well-known that every saddle point of~\eqref{eq:COED(X)} gives rise to a solution of~\eqref{eq:COED(X)}. 

\begin{lm} \label{lm:saddle-points-yield-solutions}
Suppose that Condition~\ref{cond:objective-and-constraint-fcts} is satisfied and that $(\xi^*,\lambda^*) \in \Xifin(X) \times \Lambda$ is a saddle point of~\eqref{eq:COED(X)}. Then $\xi^*$ is a solution of~\eqref{eq:COED(X)} satisfying the complementarity condition
\begin{align}
\lambda_i^* \Psi_i(\xi^*) = 0 \qquad (i \in \Iineq). 
\end{align}
\end{lm}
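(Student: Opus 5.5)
The plan is the standard saddle-point argument; the only care needed is with the extended-real values and with the domain $\Xifin(X)$ of $L$. First I will extract feasibility of $\xi^*$ from the left saddle inequality $L(\xi^*,\lambda) \le L(\xi^*,\lambda^*)$ for all $\lambda \in \Lambda$. Since $\xi^* \in \Xifin(X)$, all the values $\Psi_i(\xi^*)$ are finite. So $\lambda \mapsto L(\xi^*,\lambda) = \Psi_0(\xi^*) + \sum_{i\in I}\lambda_i\Psi_i(\xi^*)$ is an affine function on $\Lambda$ that is bounded above by the finite number $L(\xi^*,\lambda^*)$.

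The feasibility argument goes as follows.
\begin{itemize}
\item Suppose $\Psi_i(\xi^*) > 0$ for some $i \in \Iineq$. Choose $\lambda$ equal to $\lambda^*$ except $\lambda_i \to \infty$. This violates the upper bound, so $\Psi_i(\xi^*) \le 0$.
\item Suppose $\Psi_i(\xi^*) \neq 0$ for some $i \in \Ieq$. Send $\lambda_i \to \pm\infty$ with the sign of $\Psi_i(\xi^*)$. This gives the same contradiction, so $\Psi_i(\xi^*) = 0$.
\end{itemize}
Hence $\xi^* \in \Xifeas(X)$.

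Next comes complementarity. Take $\lambda := (0, (\lambda^*_i)_{i\in\Ieq})$, which lies in $\Lambda$, in the left inequality. The equality terms vanish because $\Psi_i(\xi^*)=0$ for $i\in\Ieq$. This yields $0 \le \sum_{i\in\Iineq}\lambda_i^*\Psi_i(\xi^*)$. Every summand is $\le 0$, since $\lambda_i^* \ge 0$ and $\Psi_i(\xi^*) \le 0$. So each summand vanishes, and $\lambda_i^*\Psi_i(\xi^*) = 0$ for $i \in \Iineq$. In particular $L(\xi^*,\lambda^*) = \Psi_0(\xi^*)$.

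For optimality, let $\xi \in \Xifeas(X)$ be arbitrary. If $\Psi_0(\xi) = \infty$, there is nothing to show. The subtle point is the remaining case, because $L(\cdot,\lambda^*)$ is only defined on $\Xifin(X)$. So assume $\Psi_0(\xi) < \infty$. Feasibility gives $\Psi_i(\xi) \le 0 < \infty$ for $i \in \Iineq$, and the $\Psi_i$ with $i \in \Ieq$ are real-valued. Hence $\xi \in \dom\Psi_0\cap\Xifeas(X) = \Xifin(X)\cap\Xifeas(X)$, exactly as in the identity noted before the lemma.

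The right saddle inequality then applies and gives
\begin{align*}
\Psi_0(\xi^*) = L(\xi^*,\lambda^*) \le L(\xi,\lambda^*) = \Psi_0(\xi) + \sum_{i\in\Iineq}\lambda_i^*\Psi_i(\xi) + \sum_{i\in\Ieq}\lambda_i^*\Psi_i(\xi) \le \Psi_0(\xi).
\end{align*}
The last step holds because the inequality terms are nonpositive and the equality terms vanish. Thus $\xi^*$ is a solution of~\eqref{eq:COED(X)}.

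None of these steps is a real obstacle. The one place to be careful is this domain issue: feasible designs with finite objective value automatically lie in $\Xifin(X)$, so the saddle inequality may be applied to them. Condition~\ref{cond:objective-and-constraint-fcts} is used only to guarantee that the equality constraints are real-valued and that the setting is well defined.
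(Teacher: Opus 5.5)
Your proposal is correct and is essentially the paper's approach: the paper simply refers to the standard saddle-point argument of Mangasarian (Theorems 5.3.1 and 5.3.2) or Donato, which derives feasibility and complementarity from the left saddle inequality and optimality from the right one. Your check that feasible designs with finite criterion value automatically lie in $\Xifin(X)$ is the right way to handle the restricted domain of $L$.
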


\begin{proof}
We can argue in exactly the same way as in \cite{Mangasarian} (Theorem~5.3.1 and 5.3.2) or in~\cite{Do11} (Theorem~2), for instance.
\end{proof}

Whenever both Condition~\ref{cond:objective-and-constraint-fcts} and Condition~\ref{cond:sensi-fcts} are satisfied, we write $\psi^L$ for the function $\psi^L: \Xifin(X) \times \Lambda \times X \to \R$ with
\begin{align} \label{eq:psiL-def}
	\psi^L(\xi, \lambda, x) := \psi_0(\xi,x) + \sum_{i \in I} \lambda_i \psi_i(\xi,x)
	\qquad ((\xi,\lambda,x) \in \Xifin(X) \times \Lambda \times X).
\end{align}
It is straightforward to verify that $\Xifin(X) \times X \ni (\xi,x) \mapsto \psi^L(\xi,\lambda,x)$ is the sensitivity function of $L(\cdot,\lambda)|_{\Xifin(X)}$ w.r.t.~$\Xi(X)$ for every $\lambda \in \Lambda$. In this context, it should be noticed, however, that $X \ni x \mapsto \psi^L(\xi,\lambda,x)$ is not continuous in general. After all, Condition~\ref{cond:sensi-fcts} allows the sensitivity functions of the inequality constraint functions to be discontinuous. Specifically, writing 
\begin{align} \label{eq:Ic-and-Inc-definition}
	\Ic := \{i \in I: \psi_i \text{ is continuous}\} 
	\qquad \text{and} \qquad
	\Inc := \{i \in I: \psi_i \text{ is not continuous}\}
\end{align}
for the subsets of $I$ for which the sensitivity function $\psi_i$ of $\Psi_i|_{\dom\Psi_i}$ is continuous or discontinuous, respectively, we see that $I = \Ic \cup \Inc$ and, by Condition~\ref{cond:sensi-fcts} and Lemma~\ref{lm:equality-constr-fcts-have-continuous-sensi-fct}, we also see that
\begin{gather} 
	\Ieq \subset \Ic 
	\qquad \text{and} \qquad
	\Inc \subset \Iineq,
	\label{eq:Ieq-subset-of-Ic-and-Inc-subset-of-Iineq}\\
	\psi_i(\xi,x) = g_i(x) - \Psi_i(\xi) \qquad ((\xi,x) \in \dom\Psi_i \times X \text{ and } i \in \Inc).
	\label{eq:psi_i-for-i-in-Inc}
\end{gather}

\begin{lm} \label{lm:bound-on-differences-of-Psiis-in-terms-of-sensi-fcts}
Suppose that Conditions~\ref{cond:objective-and-constraint-fcts} and~\ref{cond:sensi-fcts} are satisfied. Then
\begin{align} \label{eq:bound-on-differences-of-Psiis-in-terms-of-sensi-fcts-1}
	\Psi_i(\eta) - \Psi_i(\xi) \ge \int_X \psi_i(\xi,x) \d\eta(x) 
\end{align} 
for every $\xi \in \dom \Psi_i$ and $\eta \in \Xi(X)$ for every $i \in \{0\} \cup I$. In particular, 
\begin{align} \label{eq:bound-on-differences-of-Psiis-in-terms-of-sensi-fcts-2}
	L(\eta,\lambda) - L(\xi,\lambda) \ge \int_X \psi^L(\xi,\lambda,x) \d\eta(x)
\end{align}
for every $\xi, \eta \in \Xifin(X)$ and every $\lambda \in \Lambda$. Additionally, $\Xifin(X)$ is directionally open in the following sense: for every $\xi \in \Xifin(X)$ and every $\eta \in \Xi(X)$, there exists an $\alpha^* \in (0,1]$ such that $\xi + \alpha(\eta-\xi) \in \Xifin(X)$ for all $\alpha \in [0,\alpha^*]$. And finally, 
\begin{align} \label{eq:inf-psiL-is-finite}
	\inf_{x \in X} \psi^L(\xi, \lambda, x) \in \R
	\qquad ((\xi,\lambda) \in \Xifin(X) \times \Lambda).
\end{align}
\end{lm}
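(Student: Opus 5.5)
We prove the four assertions in turn.

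For the first inequality~\eqref{eq:bound-on-differences-of-Psiis-in-terms-of-sensi-fcts-1}, fix $i \in \{0\} \cup I$, $\xi \in \dom\Psi_i$ and $\eta \in \Xi(X)$. If $\Psi_i(\eta) = \infty$, the inequality is trivial, because $\int_X \psi_i(\xi,x)\d\eta(x)$ is finite by the integrability requirement in the definition of directional differentiability. Otherwise, $\Psi_i$ is convex by Condition~\ref{cond:sensi-fcts}; for $i \in \Ieq$ it is affine, hence convex. Convexity gives
\begin{align*}
\Psi_i(\xi+\alpha(\eta-\xi)) \le (1-\alpha)\Psi_i(\xi) + \alpha\Psi_i(\eta) \qquad (\alpha \in (0,1]),
\end{align*}
so the difference quotient in~\eqref{eq:directional-differentiability-def} is bounded above by $\Psi_i(\eta)-\Psi_i(\xi)$. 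Letting $\alpha \searrow 0$ and using~\eqref{eq:directional-differentiability-def} yields~\eqref{eq:bound-on-differences-of-Psiis-in-terms-of-sensi-fcts-1}. For $i \in \Ieq$ we have $\dom\Psi_i = \Xi(X)$, and Lemma~\ref{lm:equality-constr-fcts-have-continuous-sensi-fct} supplies the sensitivity function.

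For the Lagrangian inequality~\eqref{eq:bound-on-differences-of-Psiis-in-terms-of-sensi-fcts-2}, let $\xi,\eta \in \Xifin(X)$, so all terms are finite. We multiply~\eqref{eq:bound-on-differences-of-Psiis-in-terms-of-sensi-fcts-1} by $\lambda_i$ and sum. For $i \in \Iineq$ we have $\lambda_i \ge 0$, so the inequality is preserved. For $i \in \Ieq$ the sign of $\lambda_i$ is arbitrary, but the inequality is then an equality: $\Psi_i(\eta)-\Psi_i(\xi) = \int_X (g_i - \Psi_i(\xi))\d\eta$ by Lemma~\ref{lm:equality-constr-fcts-have-continuous-sensi-fct}. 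Linearity of the integral and the definition~\eqref{eq:psiL-def} of $\psi^L$ then give the claim.

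For directional openness, fix $\xi \in \Xifin(X)$ and $\eta \in \Xi(X)$. Note that $\eta$ need not lie in $\Xifin(X)$, so convexity of the domains alone does not suffice. For each $i \in \{0\}\cup I$, the directional derivative of $\Psi_i$ at $\xi$ in direction $\eta-\xi$ exists as a real limit in~\eqref{eq:directional-differentiability-def}. In particular the quotient is finite for all sufficiently small $\alpha$, which gives some $\alpha_i > 0$ with $\Psi_i(\xi+\alpha(\eta-\xi)) < \infty$ for $\alpha \in (0,\alpha_i]$. Taking $\alpha^* := \min(1, \min_i \alpha_i)$ over the finitely many indices, we get $\xi+\alpha(\eta-\xi) \in \Xifin(X)$ for all $\alpha \in [0,\alpha^*]$, the case $\alpha = 0$ being trivial. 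This is the step I regard as the subtle one, since it uses finiteness of the limit rather than convexity. It is, however, immediate once one reads the definition as requiring the quotients to be eventually real.

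For~\eqref{eq:inf-psiL-is-finite}, the upper bound is trivial because $\psi^L$ is real-valued. For the lower bound, the continuous parts $\psi_0(\xi,\cdot)$ and $\psi_i(\xi,\cdot)$, $i \in \Ic$, are continuous on the compact space $X$ and hence bounded. For $i \in \Inc$, \eqref{eq:psi_i-for-i-in-Inc} gives $\psi_i(\xi,\cdot) = g_i - \Psi_i(\xi)$, which is bounded since $g_i$ is bounded and $\Psi_i(\xi)$ is finite. A finite linear combination of bounded functions is bounded, so the infimum is finite.
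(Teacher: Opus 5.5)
Your proof is correct and follows essentially the same route as the paper. Both arguments use convexity (or affinity, for the equality constraints) of $\Psi_i$ to bound the difference quotient and then pass to the limit, which gives the first inequality with equality for $i \in I_{\mathrm{eq}}$; summing with $\lambda_i \ge 0$ on the inequality indices then yields the Lagrangian bound. For directional openness, both use that the directional-derivative limit is real, so the quotients are eventually finite. For the infimum, both use boundedness of the continuous sensitivity functions on compact $X$ and of $g_i - \Psi_i(\xi)$ on the discontinuous indices. Your version is slightly more explicit than the paper's, treating the case $\Psi_i(\eta)=\infty$ separately and choosing $\alpha^*$ as a minimum over the finitely many indices.
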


\begin{proof}
	Since $\Psi_i$ is convex for every $i \in \{0\} \cup \Iineq$ and even affine for every $i \in \Ieq$, we see that
	\begin{align}
		\Psi_i(\xi + \alpha(\eta-\xi)) - \Psi_i(\xi) &\le \alpha \big( \Psi_i(\eta) - \Psi_i(\xi) \big) \qquad (i \in \{0\} \cup \Iineq) \label{eq:bound-on-differences-of-Psiis-in-terms-of-sensi-fcts-step1}\\
		\Psi_i(\xi + \alpha(\eta-\xi)) - \Psi_i(\xi) &= \alpha \big( \Psi_i(\eta) - \Psi_i(\xi) \big) \qquad (i \in \Ieq)
		\label{eq:bound-on-differences-of-Psiis-in-terms-of-sensi-fcts-step2}
	\end{align}
	for all $\xi, \eta \in \Xi(X)$. Since, moreover, $\Psi_i|_{\dom\Psi_i}$ is directionally differentiable w.r.t.~$\Xi(X)$ with sensitivity function $\psi_i$ (Condition~\ref{cond:sensi-fcts} and Lemma~\ref{lm:equality-constr-fcts-have-continuous-sensi-fct}), it follows from~\eqref{eq:bound-on-differences-of-Psiis-in-terms-of-sensi-fcts-step1} and~\eqref{eq:bound-on-differences-of-Psiis-in-terms-of-sensi-fcts-step2} that
	\begin{align} 
		\int_X \psi_i(\xi,x) \d\eta(x) &\le \Psi_i(\eta)-\Psi_i(\xi) \qquad (\xi \in \dom\Psi_i \text{ and } i \in \{0\} \cup \Iineq)
		\label{eq:bound-on-differences-of-Psiis-in-terms-of-sensi-fcts-step3} \\
		\int_X \psi_i(\xi,x) \d\eta(x) &= \Psi_i(\eta)-\Psi_i(\xi) \qquad (\xi \in \dom\Psi_i \text{ and }  i \in \Ieq)
		\label{eq:bound-on-differences-of-Psiis-in-terms-of-sensi-fcts-step4}
	\end{align}
	for 
	every $\eta \in \Xi(X)$. In view of~\eqref{eq:Xifin(X)-and-Lambda-def}, \eqref{eq:psiL-def}, \eqref{eq:bound-on-differences-of-Psiis-in-terms-of-sensi-fcts-step3} and~\eqref{eq:bound-on-differences-of-Psiis-in-terms-of-sensi-fcts-step4}, the assertions~\eqref{eq:bound-on-differences-of-Psiis-in-terms-of-sensi-fcts-1} and~\eqref{eq:bound-on-differences-of-Psiis-in-terms-of-sensi-fcts-2} are now clear. 
	We now move on to prove the directional openness of $\Xifin(X)$. So, let $\xi \in \Xifin(X)$ and let $\eta \in \Xi(X)$ be given. We then know from Condition~\ref{cond:sensi-fcts} and Lemma~\ref{lm:equality-constr-fcts-have-continuous-sensi-fct} that for every $i \in \{0\} \cup I$ the limit
	\begin{align}
		\lim_{\alpha \searrow 0} \frac{\Psi_i(\xi+\alpha(\eta-\xi)) - \Psi_i(\xi)}{\alpha} = \int_X \psi_i(\xi,x)\d\xi(x) \in \R 
	\end{align} 
	exists in $\R$ and therefore $\Psi_i(\xi+\alpha(\eta-\xi))$ must be an element of $\R$ for $\alpha$ close enough to $0$. In other words, $\xi+\alpha(\eta-\xi) \in \Xifin(X)$ for $\alpha$ close enough to $0$, as desired.
	It remains to prove the finiteness assertion~\eqref{eq:inf-psiL-is-finite}. So, let $\xi \in \Xifin(X)$ and $\lambda \in \Lambda$. Since $X$ is compact and non-empty (Condition~\ref{cond:objective-and-constraint-fcts}) and since the sensitivity functions $\psi_i$ are continuous for $i \in \Ic \cup \{0\}$ (Condition~\ref{cond:sensi-fcts}), we obtain the boundedness
	\begin{align} \label{eq:psi-bounded-for-i-in-Ic}
		\sup_{x\in X} |\psi_i(\xi,x)| = \max_{x \in X} |\psi_i(\xi,x)| < \infty 
		\qquad (i \in \Ic \cup \{0\}).
	\end{align}
	Since the sensitivity functions $\psi_i$ for $i \in \Inc$ are of the form~\eqref{eq:psi_i-for-i-in-Inc} with bounded measurable functions $g_i$ (Condition~\ref{cond:sensi-fcts}), we also obtain the boundedness
	\begin{align} \label{eq:psi-bounded-for-i-in-Inc}
		\sup_{x\in X} |\psi_i(\xi,x)| < \infty 
		\qquad (i \in \Inc).
	\end{align}
	In view of~\eqref{eq:psi-bounded-for-i-in-Ic} and~\eqref{eq:psi-bounded-for-i-in-Inc}, it is now clear that $X \ni x \mapsto \psi^L(\xi,\lambda,x)$ is bounded. In particular, it is lower bounded and thus the assertion~\eqref{eq:inf-psiL-is-finite} is proved. 
\end{proof}

With the help of the previous lemma, we can now establish an important sufficient condition for the $\eps$-optimality of a design $\xi^*$, namely in terms of the infimum of the sensitivity function $X \ni x \mapsto \psi^L(\xi^*,\lambda^*,x)$. (As has been pointed out after the definition~\eqref{eq:psiL-def}, the sensitivity function is not continuous in general and therefore the infimum is not a minimum in general.) 

\begin{cor} \label{cor:minimum-of-sensi-fct-determines-optimality-gap}
Suppose that Conditions~\ref{cond:objective-and-constraint-fcts} and~\ref{cond:sensi-fcts} are satisfied. Suppose further that
\begin{align}
	\label{eq:minimum-of-sensi-fct-determines-optimality-gap-1}
	\xi^* \in \Xifin(X) \cap \Xifeas(X) 
	\qquad \text{and} \qquad
	\lambda^* \in \Lambda
\end{align}
and $\eps \in [0,\infty)$ are such that
\begin{align}
	\label{eq:minimum-of-sensi-fct-determines-optimality-gap-2}
	\lambda_i^* \Psi_i(\xi^*) = 0 \qquad (i \in \Iineq)
	\qquad \text{and} \qquad
	\inf_{x \in X} \psi^L(\xi^*, \lambda^*, x) \ge -\eps. 
\end{align}
Then $\xi^*$ is an $\eps$-optimal solution of~\eqref{eq:COED(X)}. In particular, $\xi^*$ is an $\eps^*$-optimal solution of~\eqref{eq:COED(X)} with $\eps^* := |\inf_{x \in X} \psi^L(\xi^*, \lambda^*, x )|$. 
\end{cor}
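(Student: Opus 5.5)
The plan is to compare $\xi^*$ with an arbitrary feasible competitor by means of the Lagrangian inequality~\eqref{eq:bound-on-differences-of-Psiis-in-terms-of-sensi-fcts-2} from Lemma~\ref{lm:bound-on-differences-of-Psiis-in-terms-of-sensi-fcts}. Fix any $\eta \in \Xifeas(X)$. If $\Psi_0(\eta) = \infty$ there is nothing to show. So assume $\eta \in \dom\Psi_0 \cap \Xifeas(X) = \Xifin(X) \cap \Xifeas(X)$; here the equality uses that feasibility forces $\Psi_i(\eta) \le 0 < \infty$ for $i \in \Iineq$, and that $\Psi_i$ is real-valued for $i \in \Ieq$. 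Then both $\xi^*$ and $\eta$ lie in $\Xifin(X)$, and Lemma~\ref{lm:bound-on-differences-of-Psiis-in-terms-of-sensi-fcts} yields
\begin{align*}
L(\eta,\lambda^*) - L(\xi^*,\lambda^*) \ge \int_X \psi^L(\xi^*,\lambda^*,x)\d\eta(x) \ge \inf_{x\in X}\psi^L(\xi^*,\lambda^*,x) \ge -\eps,
\end{align*}
where the middle inequality holds because $\eta$ is a probability measure. The integral is well defined because $x\mapsto\psi^L(\xi^*,\lambda^*,x)$ is bounded, as shown in the proof of that lemma.

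Next I evaluate the two Lagrangian values. By the complementarity assumption in~\eqref{eq:minimum-of-sensi-fct-determines-optimality-gap-2}, together with $\Psi_i(\xi^*) = 0$ for $i \in \Ieq$ (feasibility of $\xi^*$), we get $L(\xi^*,\lambda^*) = \Psi_0(\xi^*)$. For $\eta$, feasibility gives $\lambda_i^*\Psi_i(\eta) \le 0$ for $i\in\Iineq$, since $\lambda_i^* \ge 0$, and $\lambda_i^*\Psi_i(\eta) = 0$ for $i \in \Ieq$. Hence $L(\eta,\lambda^*) \le \Psi_0(\eta)$. Combining these, $\Psi_0(\xi^*) \le \Psi_0(\eta) + \eps$ for every feasible $\eta$. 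Taking the infimum over $\eta$ and recalling $\xi^* \in \Xifeas(X)$ shows that $\xi^*$ is $\eps$-optimal.

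For the last statement, apply the first part with $\eps := \eps^*$. This requires $\inf_x \psi^L(\xi^*,\lambda^*,x) \ge -|\inf_x \psi^L(\xi^*,\lambda^*,x)|$, which always holds; the infimum is finite by~\eqref{eq:inf-psiL-is-finite}, so $\eps^* \in [0,\infty)$. I expect no real obstacle. The only care needed is the domain bookkeeping: $\eta$ must lie in $\Xifin(X)$ for the Lagrangian inequality to apply, which is why the case $\Psi_0(\eta)=\infty$ is set aside first.
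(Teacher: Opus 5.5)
Your proof is correct and follows essentially the same route as the paper: the Lagrangian inequality from Lemma~\ref{lm:bound-on-differences-of-Psiis-in-terms-of-sensi-fcts}, then complementarity to get $L(\xi^*,\lambda^*)=\Psi_0(\xi^*)$, then the sign of $\lambda_i^*\Psi_i(\eta)$ for feasible $\eta$. The only difference is the comparison point. The paper compares $\xi^*$ with an exactly optimal design $\eta^*$ obtained from Theorem~\ref{thm:solvability}, whereas you compare with an arbitrary feasible $\eta$ and take the infimum, which avoids needing a minimizer to exist.
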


\begin{proof}
In view of~(\ref{eq:minimum-of-sensi-fct-determines-optimality-gap-1}.a) and of the solvability theorem (Theorem~\ref{thm:solvability}), there exists an (exactly) optimal design $\eta^*$ with $\eta^* \in \Xifin(X)$. In short, 
\begin{align}
	\label{eq:minimum-of-sensi-fct-determines-optimality-gap-3}
	\eta^* \in \Xifin(X) \cap \Xifeas(X)
	\qquad \text{and} \qquad
	\Psi_0(\eta^*) = \min_{\xi \in \Xifeas(X)} \Psi_0(\xi).
\end{align}
With the help of~\eqref{eq:bound-on-differences-of-Psiis-in-terms-of-sensi-fcts-2}, it now follows by~\eqref{eq:minimum-of-sensi-fct-determines-optimality-gap-1}-\eqref{eq:minimum-of-sensi-fct-determines-optimality-gap-3} that
\begin{align}
	\label{eq:minimum-of-sensi-fct-determines-optimality-gap-4}
	-\eps 
	&\le \int_X \psi^L(\xi^*, \lambda^*, x) \d\eta^*(x)
	\le L(\eta^*, \lambda^*) - L(\xi^*, \lambda^*) \notag\\
	&= \Psi_0(\eta^*) + \sum_{i \in \Iineq} \lambda_i^* \Psi_i(\eta^*) - \Psi_0(\xi^*)
	\le \min_{\xi \in \Xifeas(X)} \Psi_0(\xi) - \Psi_0(\xi^*).
\end{align}
In view of~(\ref{eq:minimum-of-sensi-fct-determines-optimality-gap-1}.a) and~\eqref{eq:minimum-of-sensi-fct-determines-optimality-gap-4}, it is now clear that $\xi^*$ is an $\eps$-optimal solution, as desired. 
Applying the first part of the assertion with $\eps := \eps^* := |\inf_{x \in X} \psi^L(\xi^*, \lambda^*, x )| \in [0,\infty)$ (Lemma~\ref{lm:bound-on-differences-of-Psiis-in-terms-of-sensi-fcts}), we finally see that $\xi^*$ is also an $\eps^*$-optimal solution, as desired.
\end{proof}

\subsubsection{A necessary optimality condition}

In this paragraph, we show that under a suitable regularity condition, the converse implication of Lemma~\ref{lm:saddle-points-yield-solutions} holds true as well, that is, every solution $\xi^*$ gives rise to a saddle point $(\xi^*,\lambda^*)$ of~\eqref{eq:COED(X)} (Theorem~\ref{thm:solutions-yield-saddle-points}). We will use this implication to ensure that all the discretized design problems occurring in our algorithm really have a saddle point. See the proof of Lemma~\ref{lm:sufficient-cond-for-saddle-points-existence-and-reg-conditions} below. Compared to the Robinson-Zowe-Kurcyusz regularity condition~\cite{Ro76, ZoKu79} used in~\cite{MoZu02, MoZu04} and to the Daniele-Giuffr\'{e}-Maugeri regularity condition~\cite{DaGiMa09} used in~\cite{Do11}, our regularity condition  is considerably simpler to verify in the experimental-design setting considered here. 
\smallskip

We call a subset $Z$ of $\R^J$ -- with $J$ a finite index set -- \emph{one-sided w.r.t.~$0$} iff there is a vector $\nu \in \R^J \setminus \{0\}$ such that 
\begin{align}
Z \subset \{w \in \R^J: \nu^\top w \ge 0\}.
\end{align}
In other words, a subset $Z$ of $\R^J$ is one-sided w.r.t.~$0$ iff $Z$ is completely contained in one of the two  closed sides (half-spaces) of a hyperplane in $\R^J$ that passes through $0$. 

\begin{lm} \label{lm:non-one-sidedness}
Suppose $h: Y \to \R^J$ is a mapping from some set $Y$ to $\R^J$ with some finite set $J \ne \emptyset$ and let $Z := h(Y)$ be the range of $h$. 
\begin{itemize}
\item[(i)] If $Z$ is not one-sided w.r.t.~$0$, then the component mappings $\{h_j: j \in J\}$ are linearly independent.
\item[(ii)] If $Z$ has a non-empty intersection with the interior of every orthant of $\R^J$, then $Z$ is not one-sided w.r.t.~$0$. In particular, $Z$ is not one-sided w.r.t.~$0$ if $0$ lies in the interior of $Z$. 
\end{itemize}
\end{lm}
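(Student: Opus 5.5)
Both parts follow directly from the definition of one-sidedness by contraposition and a sign-pattern argument.

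For (i), I argue by contraposition. Suppose the component mappings $h_j$, $j \in J$, are linearly dependent as real-valued functions on $Y$. Then there is a $\nu \in \R^J \setminus \{0\}$ with $\sum_{j\in J} \nu_j h_j(y) = 0$ for all $y \in Y$, that is, $\nu^\top w = 0$ for all $w \in Z = h(Y)$. In particular,
\begin{align*}
Z \subset \{w \in \R^J: \nu^\top w \ge 0\},
\end{align*}
so $Z$ is one-sided w.r.t.~$0$. This contradicts the hypothesis, so the components must be linearly independent.

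For (ii), again by contraposition, suppose $Z$ is one-sided, with some $\nu \ne 0$ such that $\nu^\top w \ge 0$ for all $w \in Z$. I choose a sign vector $s \in \{-1,1\}^J$ with $s_j = -\sgn(\nu_j)$ whenever $\nu_j \ne 0$, and $s_j$ arbitrary otherwise. Consider the open orthant $O_s := \{w: s_j w_j > 0 \text{ for all } j\}$, which is the interior of the corresponding closed orthant. For any $w \in O_s$, each term $\nu_j w_j$ is either $0$ (when $\nu_j = 0$) or strictly negative. At least one term is strictly negative because $\nu \ne 0$, so $\nu^\top w < 0$. Hence $Z \cap O_s = \emptyset$, contradicting the assumption that $Z$ meets the interior of every orthant.

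For the \enquote{in particular} claim, note that if $0$ lies in the interior of $Z$, some ball $B_r(0)$ is contained in $Z$. Each open orthant $O_s$ then contains the point $(r/(2\sqrt{|J|}))\, s \in B_r(0)$, so $Z$ meets the interior of every orthant, and the first part of (ii) applies.

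There is no real obstacle here. The only point needing care is choosing the orthant opposite to $\nu$ so that the hyperplane inequality fails strictly, including the treatment of coordinates where $\nu_j = 0$.
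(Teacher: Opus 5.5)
Your proof is correct and follows essentially the same route as the paper. Part (i) goes by contraposition straight from the definitions, and part (ii) picks the open orthant whose signs are opposite to those of $\nu$ on its support, where $\nu^\top w<0$; you additionally spell out the ball argument for the ``in particular'' clause, which the paper leaves implicit.
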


\begin{proof}
Assertion (i) immediately follows by the definitions of linear dependence and of one-sidedness w.r.t.~$0$.
Assertion (ii) easily follows by the definition as well. Indeed, let $Z$ have a non-empty intersection with the interior of every orthant of $\R^J$ and assume, for the sake of argument, that $Z$ is one-sided w.r.t.~$0$. It then follows that there is a $\nu \in \R^J \setminus \{0\}$ such that $\nu^\top z \ge 0$ for all $z \in Z$. In particular,
\begin{align} \label{eq:non-one-sidedness-induction-1}
J^* := \{j \in J: \nu_j \ne 0\} \ne \emptyset
\qquad \text{and} \qquad
0 \le \nu^\top z = \sum_{j\in J} \nu_j z_j = \sum_{j\in J^*} \nu_j z_j \qquad (z \in Z).
\end{align}
Choose now a $z^* \in Z$ that belongs to the interior of an orthant opposite from $\nu$ or, more precisely, a point $z^*$ such that
\begin{align}
z^* \in Z 
\qquad \text{and} \qquad
z^*_j \ne 0 \quad (j \in J) 
\qquad \text{and} \qquad
\sgn(z^*_j) = -\sgn(\nu_j) \quad (j \in J^*),
\end{align}
where $\sgn(t)$ denotes the sign of the non-zero real number $t$, of course. Inserting then $z^*$ into (\ref{eq:non-one-sidedness-induction-1}.b), we obtain a contradiction. So, our assumption that $Z$ is one-sided w.r.t.~$0$ was false and the proof is complete.
\end{proof}

We can now formulate our regularity condition. Condition~\ref{cond:slater-and-non-one-sidedness}~(i) is a  strict feasibility condition on our design problem $\CD(X)$ and Condition~\ref{cond:slater-and-non-one-sidedness}~(ii) is a non-one-sidedness condition on the range $\Psieq(\Xi(X))$ of the equality constraint functions. If there are no equality constraints in~\eqref{eq:COED(X)}, this non-one-sidedness condition is automatically satisfied because then $\R^{\Ieq} = \R^\emptyset = \{0\}$ by convention. 

\begin{cond} \label{cond:slater-and-non-one-sidedness}
\begin{itemize}
\item[(i)] $\CD(X)$ has a strictly feasible point, that is, a point $\eta^* \in \Xi(X)$ such that
\begin{align} 
\Psi_i(\eta^*) < 0 \qquad (i \in \Iineq) \qquad \text{and} \qquad \Psi_i(\eta^*) = 0 \qquad (i \in \Ieq). 
\end{align}
\item[(ii)] $\Psieq(\Xi(X)) := \{(\Psi_i(\xi))_{i\in\Ieq}: \xi \in \Xi(X)\}$ is not one-sided w.r.t.~$0$.
\end{itemize}
\end{cond}

In the next lemma, we provide simple sufficient conditions for the above Condition~\ref{cond:slater-and-non-one-sidedness} to be satisfied. In essence, this lemma reduces the verification of the strict feasibility and non-one-sidedness conditions from Condition~\ref{cond:slater-and-non-one-sidedness} to their verification on an arbitrary non-empty closed subset $X^*$ of $X$. In this context, it is important to notice that every design on a measurable subset $X^*$ of $X$ can be canonically identified with a design on $X$. Specifically, the design $\xi^* \in \Xi(X^*)$ can be canonically identified with the design $\iota^*(\xi^*) \in \Xi(X)$ via the canonical embedding $\iota^*: \Xi(X^*) \to \Xi(X)$, which is defined by
\begin{align}
	\iota^*(\xi^*)(E) := \xi^*(E \cap X^*) \qquad (E \in \B_X).
\end{align} 
Clearly, this canonical embedding is a continuous and linear mapping. 
In the following, we will always tacitly make this identification for the sake of notational simplicity. Accordingly, we will always consider $\Xi(X^*)$ as a subset of $\Xi(X)$ and we will always apply the objective and constraint functions $\Psi_i$ from Condition~\ref{cond:objective-and-constraint-fcts} as well as their sensitivity functions $\psi_i$ from Condition~\ref{cond:sensi-fcts} also to designs $\xi^* \in \Xi(X^*)$. In particular, we will write
\begin{align}
	\Xi(X^*) \subset \Xi(X), \qquad
	\Psi_i(\xi^*), \qquad
	\psi_i(\xi^*,\cdot)|_{X^*},
\end{align}
although strictly speaking we would have to write $\iota^*(\Xi(X^*)) \subset \Xi(X)$ and $\Psi_i(\iota^*(\xi^*))$ and $\psi_i(\iota^*(\xi^*),\cdot)|_{X^*}$ instead. 

\begin{lm} \label{lm:sufficient-conds-for-lin-slater-and-non-one-sidedness-cond}
Suppose that Conditions~\ref{cond:objective-and-constraint-fcts} and~\ref{cond:sensi-fcts} are satisfied.
\begin{itemize}
\item[(i)] Condition~\ref{cond:slater-and-non-one-sidedness}~(i) is satisfied if, for some closed subset $X^* \ne \emptyset$ of $X$, the design problem $\CD(X^*)$ has a strictly feasible point $\eta^*$, that is, a point $\eta^* \in \Xi(X^*)$ such that
\begin{align} \label{eq:slater-point}
\Psi_i(\eta^*) < 0 \qquad (i \in \Iineq) \qquad \text{and} \qquad \Psi_i(\eta^*) = 0 \qquad (i \in \Ieq). 
\end{align}
\item[(ii)] Condition~\ref{cond:slater-and-non-one-sidedness}~(ii) is satisfied if, for some closed subset $X^* \ne \emptyset$ of $X$, the set $\Psieq(\Xi(X^*)) := \{(\Psi_i(\xi))_{i\in\Ieq}: \xi \in \Xi(X^*)\}$ is not one-sided w.r.t.~$0$. And this, in turn, is satisfied if the range $\gequ(X^*) := \{(g_i(x))_{i\in\Ieq}: x \in X^*\}$ is not one-sided w.r.t.~$0$. 
\end{itemize}
\end{lm}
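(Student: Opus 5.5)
Both parts rest on the canonical embedding $\iota^*: \Xi(X^*) \to \Xi(X)$. Under it, every design on $X^*$ is a design on $X$, and the values $\Psi_i(\xi^*)$ are, by the convention above, just the values of $\Psi_i$ at $\iota^*(\xi^*)$.

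For part~(i), let $\eta^* \in \Xi(X^*)$ be strictly feasible for $\CD(X^*)$. Then $\iota^*(\eta^*) \in \Xi(X)$ satisfies
\begin{align*}
\Psi_i(\iota^*(\eta^*)) < 0 \quad (i \in \Iineq) \qquad \text{and} \qquad \Psi_i(\iota^*(\eta^*)) = 0 \quad (i \in \Ieq),
\end{align*}
so it is a strictly feasible point of $\CD(X)$, which is exactly Condition~\ref{cond:slater-and-non-one-sidedness}~(i). There is nothing more to do here.

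For the first claim of part~(ii), I would use that $\iota^*(\Xi(X^*)) \subset \Xi(X)$, which gives $\Psieq(\Xi(X^*)) \subset \Psieq(\Xi(X))$. One-sidedness passes to subsets: if $\Psieq(\Xi(X)) \subset \{w: \nu^\top w \ge 0\}$ for some $\nu \ne 0$, then the same half-space contains $\Psieq(\Xi(X^*))$. Taking the contrapositive, if $\Psieq(\Xi(X^*))$ is not one-sided w.r.t.~$0$, then neither is $\Psieq(\Xi(X))$.

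For the second claim of part~(ii), I would invoke Lemma~\ref{lm:equality-constr-fcts-have-continuous-sensi-fct}. It gives $\Psi_i(\xi) = \int_X g_i \d\xi$ with $g_i \in C(X,\R)$, and by its proof $g_i(x) = \Psi_i(\delta_x)$. For $x \in X^*$ the Dirac measure $\delta_x$ lies in $\Xi(X^*)$, and $\Psieq(\delta_x) = \gequ(x)$. Hence $\gequ(X^*) \subset \Psieq(\Xi(X^*))$, and the same subset argument applies: if $\gequ(X^*)$ is not one-sided, then $\Psieq(\Xi(X^*))$ is not one-sided, and therefore neither is $\Psieq(\Xi(X))$.

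The only point needing care is the identification of $\delta_x \in \Xi(X^*)$ with $\delta_x \in \Xi(X)$ under $\iota^*$. This holds because $\iota^*(\delta_x)(E) = \delta_x(E \cap X^*) = \delta_x(E)$ for $x \in X^*$. Beyond that, the argument is purely set-theoretic, so I expect no real obstacle.
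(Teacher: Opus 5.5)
Your proof is correct and follows essentially the same route as the paper: you embed the strictly feasible design from $\Xi(X^*)$ into $\Xi(X)$, you use that one-sidedness passes to subsets, and you use $g_i(x) = \Psi_i(\delta_x)$ from Lemma~\ref{lm:equality-constr-fcts-have-continuous-sensi-fct} to get $\gequ(X^*) \subset \Psieq(\Xi(X^*))$. The only difference is that the paper runs the argument with an arbitrary closed superset $X^{**} \supset X^*$ in place of $X$, because that slightly more general form is reused in Lemma~\ref{lm:sufficient-cond-for-saddle-points-existence-and-reg-conditions}; your argument carries over to that setting verbatim.
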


\begin{proof}
(i) Suppose $X^* \ne \emptyset$ is a closed subset of $X$ such that the design problem $\CD(X^*)$ has a strictly feasible point $\eta^*$. Suppose further that $X^{**}$ is another closed subset of $X$ with $X^{**} \supset X^*$. We then show that Condition~\ref{cond:slater-and-non-one-sidedness}~(i) is satisfied with $X^{**}$ instead of $X$. (Setting $X^{**} := X$, we then obtain the assertion of part (i) of the lemma -- the more general claim with a general $X^{**}$ is needed only later on in Lemma~\ref{lm:sufficient-cond-for-saddle-points-existence-and-reg-conditions}.) 
Since $\eta^*$ is a strictly feasible point of $\CD(X^*)$ by assumption and since $\Xi(X^*) \subset \Xi(X^{**})$, it immediately follows that $\eta^*$ is a strictly feasible point of $\CD(X^{**})$ as well. We have thus established that Condition~\ref{cond:slater-and-non-one-sidedness}~(i) is satisfied with $X^{**}$ instead of $X$, as claimed. 
\smallskip

(ii) Suppose that $X^* \ne \emptyset$ is a closed subset of $X$ such that the set $\Psieq(\Xi(X^*)) := \{(\Psi_i(\xi))_{i\in\Ieq}: \xi \in \Xi(X^*)\}$ is not one-sided w.r.t.~$0$. Suppose further that $X^{**}$ is another closed subset of $X$ with $X^{**} \supset X^*$. We then show that Condition~\ref{cond:slater-and-non-one-sidedness}~(ii) is satisfied with $X^{**}$ instead of $X$. (Setting $X^{**} := X$, we then obtain the assertion of part (ii) of the lemma -- the more general claim with a general $X^{**}$ is needed only later on in Lemma~\ref{lm:sufficient-cond-for-saddle-points-existence-and-reg-conditions}.) 
Assume, to the contrary, that $\Psieq(\Xi(X^{**}))$ is one-sided w.r.t.~$0$. We would then have a vector $\nu \in \R^{\Ieq}$ such that
\begin{align} \label{eq:sufficient-conds-for-lin-slater-and-non-one-sidedness-cond-(ii)-1}
\nu \ne 0 
\qquad \text{and} \qquad 
\Psieq(\Xi(X^{**})) \subset \{w \in \R^{\Ieq}: \nu^\top w \ge 0\}.
\end{align} 
Since $\Xi(X^*) \subset \Xi(X^{**})$, the relation~\eqref{eq:sufficient-conds-for-lin-slater-and-non-one-sidedness-cond-(ii)-1} would further imply that $\Psieq(\Xi(X^*))$ is one-sided w.r.t.~$0$. Contradiction! 
It remains to prove that the non-one-sidedness of $\gequ(X^*)$ implies the non-one-sidedness of $\Psieq(\Xi(X^*))$. We prove this by contraposition. So, let $\Psieq(\Xi(X^*))$ be one-sided w.r.t.~$0$. It then follows that there is a vector $\nu \in \R^{\Ieq}$ such that
\begin{align} \label{eq:sufficient-conds-for-lin-slater-and-non-one-sidedness-cond-(ii)-2}
\nu \ne 0 
\qquad \text{and} \qquad
0 \le \nu^\top \Psieq(\delta_x) 
= \sum_{i\in\Ieq} \nu_i g_i(x) = \nu^\top \gequ(x)
\qquad (x \in X^*),
\end{align}
where for the first equality we used Lemma~\ref{lm:equality-constr-fcts-have-continuous-sensi-fct}. In view of~\eqref{eq:sufficient-conds-for-lin-slater-and-non-one-sidedness-cond-(ii)-2}, the set $\gequ(X^*)$ is one-sided w.r.t.~$0$, as desired. 
\end{proof}

With these preliminaries at hand, we can now prove that under our regularity condition (Condition~\ref{cond:slater-and-non-one-sidedness}) every solution of~\eqref{eq:COED(X)} yields a saddle point of~\eqref{eq:COED(X)}.

\begin{thm} \label{thm:solutions-yield-saddle-points}
Suppose that Conditions~\ref{cond:objective-and-constraint-fcts} and~\ref{cond:sensi-fcts} are satisfied and that $\Xifin(X) \cap \Xifeas(X) \ne \emptyset$. Suppose further that $\xi^*$ is a solution of~\eqref{eq:COED(X)} and that Condition~\ref{cond:slater-and-non-one-sidedness} is satisfied. Then there exists a $\lambda^* \in \Lambda$ such that $(\xi^*,\lambda^*)$ is a saddle point of~\eqref{eq:COED(X)}.
\end{thm}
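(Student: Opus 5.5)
Our plan is the classical separating-hyperplane proof of Lagrangian duality for convex programs, carried out in the finite-dimensional image space $\R \times \R^{\Iineq} \times \R^{\Ieq}$. Write $p^* := \Psi_0(\xi^*)$; this is finite because $\Xifin(X)\cap\Xifeas(X) \ne \emptyset$ and $\xi^*$ is optimal. We then consider the set
\begin{align*}
\mathcal{A} := \big\{ (t,u,v) \in \R \times \R^{\Iineq} \times \R^{\Ieq} : \exists\, \xi \in \Xifin(X) \text{ with } \Psi_0(\xi) < t,\ \Psi_i(\xi) \le u_i \ (i \in \Iineq),\ \Psi_i(\xi) = v_i \ (i\in\Ieq) \big\}.
\end{align*}
This set is convex because $\Xifin(X)$ is convex, $\Psi_0$ and $\Psi_i$ ($i\in\Iineq$) are convex, and $\Psi_i$ ($i \in \Ieq$) are affine (Condition~\ref{cond:sensi-fcts}). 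By the optimality of $\xi^*$, the point $(p^*,0,0)$ does not lie in $\mathcal{A}$. The finite-dimensional separation theorem therefore yields $(\mu_0,\mu,\nu) \ne 0$ with
\begin{align*}
\mu_0 t + \mu^\top u + \nu^\top v \ge \mu_0 p^* \qquad ((t,u,v) \in \mathcal{A}).
\end{align*}
Since $t$ and $u$ may be increased freely inside $\mathcal{A}$, we get $\mu_0 \ge 0$ and $\mu \ge 0$. Letting $t \searrow \Psi_0(\xi)$ gives
\begin{align*}
\mu_0 \Psi_0(\xi) + \sum_{i\in\Iineq} \mu_i \Psi_i(\xi) + \sum_{i\in\Ieq} \nu_i \Psi_i(\xi) \ge \mu_0 p^* \qquad (\xi \in \Xifin(X)).
\end{align*}

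The main obstacle is to rule out $\mu_0 = 0$, which is where Condition~\ref{cond:slater-and-non-one-sidedness} enters. We first need the strictly feasible point $\eta^*$ to lie in $\Xifin(X)$. Because $\Psi_i(\eta^*) < 0$ for $i \in \Iineq$, we already have $\eta^* \in \dom\Psi_i$ for those $i$. It remains to handle $\dom \Psi_0$. Here we replace $\eta^*$ by $\eta_\alpha := \xi^* + \alpha(\eta^*-\xi^*)$ with small $\alpha > 0$. By the directional openness in Lemma~\ref{lm:bound-on-differences-of-Psiis-in-terms-of-sensi-fcts}, $\eta_\alpha \in \Xifin(X)$. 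Convexity gives $\Psi_i(\eta_\alpha) \le \alpha \Psi_i(\eta^*) < 0$ for $i\in\Iineq$, using $\Psi_i(\xi^*)\le 0$, and affinity gives $\Psi_i(\eta_\alpha) = 0$ for $i\in\Ieq$. So we may assume $\eta^* \in \Xifin(X)$.

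Now suppose $\mu_0 = 0$. Inserting $\eta^*$ into the last inequality gives $\sum_{i\in\Iineq}\mu_i\Psi_i(\eta^*) \ge 0$, and since every $\Psi_i(\eta^*)$ there is negative, this forces $\mu = 0$. Hence $\nu \ne 0$ and $\nu^\top \Psieq(\xi) \ge 0$ for all $\xi \in \Xifin(X)$. To reach a contradiction with Condition~\ref{cond:slater-and-non-one-sidedness}~(ii), we must extend this inequality to all of $\Xi(X)$. Take an arbitrary $\eta \in \Xi(X)$. By directional openness, $\xi^* + \alpha(\eta - \xi^*) \in \Xifin(X)$ for small $\alpha>0$. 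By affinity and $\Psieq(\xi^*)=0$, we have $\nu^\top\Psieq(\xi^*+\alpha(\eta-\xi^*)) = \alpha\, \nu^\top \Psieq(\eta)$, so $\nu^\top\Psieq(\eta) \ge 0$. Thus $\Psieq(\Xi(X))$ is one-sided w.r.t.~$0$, a contradiction. We conclude $\mu_0 > 0$.

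Finally, we divide by $\mu_0$ and set $\lambda^* := (\mu/\mu_0, \nu/\mu_0) \in \Lambda$. The inequality then reads $L(\xi,\lambda^*) \ge p^*$ for all $\xi \in \Xifin(X)$. Taking $\xi = \xi^*$ gives $\sum_{i\in\Iineq}\lambda^*_i\Psi_i(\xi^*) \ge 0$, and since each term is $\le 0$, we obtain complementarity $\lambda_i^*\Psi_i(\xi^*) = 0$. Consequently $L(\xi^*,\lambda^*) = p^* \le L(\xi,\lambda^*)$ for all $\xi \in \Xifin(X)$, which is the right saddle inequality. The left saddle inequality follows because for every $\lambda\in\Lambda$,
\begin{align*}
L(\xi^*,\lambda) = p^* + \sum_{i\in\Iineq}\lambda_i\Psi_i(\xi^*) \le p^* = L(\xi^*,\lambda^*),
\end{align*}
using feasibility of $\xi^*$.
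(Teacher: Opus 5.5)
Your proof is correct, but it takes a different route from the paper's.

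You separate the point $(\Psi_0(\xi^*),0,0)$ from the value-image set $\mathcal{A}$, which is built from the actual function values on $\Xifin(X)$. This is the textbook Lagrange-duality argument for convex programs. The paper instead linearizes at $\xi^*$:
\begin{itemize}
\item it separates the set $C$ of vectors $(\partial_{\eta-\xi^*}\Psi_0(\xi^*)+\alpha,\ \Psiineq(\xi^*)+\partial_{\eta-\xi^*}\Psiineq(\xi^*)+\beta,\ \Psieq(\eta))$, $\eta\in\Xi(X)$, from $(-\infty,0)\times\{0\}\times\{0\}$;
\item it proves their disjointness by a feasible-direction argument at $\xi^*$;
\item only at the end does it pass from the resulting first-order inequality to the saddle point, via the gradient inequality $\Psi_i(\eta)-\Psi_i(\xi^*)\ge\partial_{\eta-\xi^*}\Psi_i(\xi^*)$.
\end{itemize}

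Your version is more elementary. Directional differentiability enters only through the directional openness of $\Xifin(X)$, and optimality of $\xi^*$ is used directly rather than through a feasible-direction argument. The price is that $\mathcal{A}$ only sees designs in $\Xifin(X)$. You therefore need two extra segment arguments from $\xi^*$: moving the Slater point into $\dom\Psi_0$, and extending $\nu^\top\Psieq\ge 0$ from $\Xifin(X)$ to all of $\Xi(X)$ using $\Psieq(\xi^*)=0$. You carry out both correctly.

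In the paper's linearized set, $\eta$ ranges over all of $\Xi(X)$ from the start, because the directional derivatives at $\xi^*\in\Xifin(X)$ are finite in every direction. So neither extra step is needed there. The separation also yields, as a by-product, the sensitivity inequality $\int_X\psi^L(\xi^*,\lambda^*,x)\d\eta(x)\ge 0$ for all $\eta\in\Xi(X)$, which is the optimality condition the algorithm is built on.
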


\begin{proof}
As a first step, we show that the sets 
\begin{align} \label{eq:solutions-yield-saddle-points-step1-1}
C := \big\{ \big(\partial_{\eta-\xi^*}\Psi_0(\xi^*) + \alpha, \,
&\Psiineq(\xi^*) + \partial_{\eta-\xi^*}\Psiineq(\xi^*) + \beta, \Psieq(\eta) \big): \notag \\ 
&\eta \in \Xi(X), \alpha \in [0,\infty), \beta \in (0,\infty)^{\Iineq} \big\}
\subset \R \times \R^{\Iineq} \times \R^{\Ieq}
\end{align}
and $D := (-\infty,0) \times \{0\} \times \{0\} \subset \R \times \R^{\Iineq} \times \R^{\Ieq}$ have an empty intersection. 
We show this by contradiction. So, assume that $C \cap D \ne \emptyset$ for the sake of argument. Then there exist $\hat{\eta} \in \Xi(X)$, $\hat{\alpha} \in [0,\infty)$, $\hat{\beta} \in (0,\infty)^{\Iineq}$ and $\hat{\eps} \in (0,\infty)$ such that 
\begin{gather}
\partial_{\hat{\eta}-\xi^*}\Psi_0(\xi^*) + \hat{\alpha} = -\hat{\eps},
\label{eq:solutions-yield-saddle-points-step1-2}\\
\Psi_i(\xi^*) + \partial_{\hat{\eta}-\xi^*}\Psi_i(\xi^*) + \hat{\beta}_i = 0 \qquad (i \in \Iineq)
\qquad \text{and} \qquad \Psi_i(\hat{\eta}) = 0 \qquad (i \in \Ieq).
\label{eq:solutions-yield-saddle-points-step1-3}
\end{gather}
Since $\xi^* \in \Xifin(X) \cap \Xifeas(X)$ by assumption, it easily follows from~\eqref{eq:solutions-yield-saddle-points-step1-3} that there exists a $t^* \in (0,1]$ such that
\begin{align} \label{eq:solutions-yield-saddle-points-step1-4}
\xi^* + t (\hat{\eta} - \xi^*) \in \Xifeas(X) \qquad (t \in [0,t^*]).
\end{align}
In order to prove~\eqref{eq:solutions-yield-saddle-points-step1-4}, we use the set $I(\xi^*) := \{i \in \Iineq: \Psi_i(\xi^*) = 0\}$ of active inequality constraints at $\xi^*$. If $i \in I(\xi^*)$, then $\Psi_i(\xi^*) = 0$ and therefore
\begin{align} \label{eq:solutions-yield-saddle-points-step1-4.1}
\lim_{t\searrow 0} \frac{\Psi_i(\xi^*+t(\hat{\eta}-\xi^*))}{t} = \partial_{\hat{\eta}-\xi^*}\Psi_i(\xi^*) = -\hat{\beta}_i < 0
\end{align}
by virtue of~(\ref{eq:solutions-yield-saddle-points-step1-3}.a). If $i \in \Iineq \setminus I(\xi^*)$, then $\Psi_i(\xi^*) < 0$ and therefore
\begin{align} \label{eq:solutions-yield-saddle-points-step1-4.2}
\limsup_{t\searrow 0} \Psi_i(\xi^*+t(\hat{\eta}-\xi^*)) \le \Psi_i(\xi^*) < 0
\end{align}
by the convexity of $\Psi_i$. And finally, if $i \in \Ieq$, then 
\begin{align} \label{eq:solutions-yield-saddle-points-step1-4.3}
\Psi_i(\xi^*+t(\hat{\eta}-\xi^*)) = (1-t) \Psi_i(\xi^*) + t \Psi_i(\hat{\eta}) = 0 \qquad (t \in [0,1])  
\end{align}
by virtue of the affinity of $\Psi_i$ and~(\ref{eq:solutions-yield-saddle-points-step1-3}.b). In view of~\eqref{eq:solutions-yield-saddle-points-step1-4.1}-\eqref{eq:solutions-yield-saddle-points-step1-4.3}, the claimed existence of a $t^* \in (0,1]$ with~\eqref{eq:solutions-yield-saddle-points-step1-4} is now clear. Since $\xi^*$ is a minimizer of~\eqref{eq:COED(X)} by assumption, it follows from~\eqref{eq:solutions-yield-saddle-points-step1-4} that
\begin{align}
0 \le \lim_{t\searrow 0} \frac{\Psi_0(\xi^*+t(\hat{\eta}-\xi^*)) - \Psi_0(\xi^*)}{t} = \partial_{\hat{\eta}-\xi^*}\Psi_0(\xi^*).
\end{align}
Contradiction to~\eqref{eq:solutions-yield-saddle-points-step1-2}!
\smallskip

As a second step, we show that there exists a non-zero vector $(\kappa,\lambda) \in [0,\infty) \times \Lambda$ such that
\begin{gather}
\kappa \partial_{\eta-\xi^*}\Psi_0(\xi^*) + \sum_{i \in \Iineq} \lambda_i \partial_{\eta-\xi^*}\Psi_i(\xi^*) + \sum_{i\in\Ieq} \lambda_i \Psi_i(\eta) \ge 0 \qquad (\eta \in \Xi(X)),
\label{eq:solutions-yield-saddle-points-step2.1}\\
\sum_{i\in\Iineq} \lambda_i \Psi_i(\xi^*) = 0
\label{eq:solutions-yield-saddle-points-step2.2}
\end{gather}
In order to see this, we first observe that $C$ and $D$ from the first step are disjoint convex subsets of $\R \times \R^I$ and can thus be linearly separated. In other words, there exists a vector $\nu = (\kappa,\lambda) \in \R \times \R^I$ such that 
\begin{align} \label{eq:solutions-yield-saddle-points-step2-1}
\nu \ne 0
\qquad \text{and} \qquad
\nu^\top d \le \nu^\top c \qquad (c \in C \text{ and } d \in D). 
\end{align}
Spelled out, the latter inequality means that
\begin{align} \label{eq:solutions-yield-saddle-points-step2-2}
-\kappa \cdot \eps \le \kappa \big(\partial_{\eta-\xi^*}\Psi_0(\xi^*) + \alpha\big) + \sum_{i \in \Iineq} \lambda_i \big( \Psi_i(\xi^*) + \partial_{\eta-\xi^*}\Psi_i(\xi^*) + \beta_i\big) + \sum_{i\in\Ieq} \lambda_i \Psi_i(\eta)
\end{align}
for all $\eta \in \Xi(X)$, $\alpha \in [0,\infty)$, $\beta \in (0,\infty)^{\Iineq}$ and all $\eps \in (0,\infty)$. Setting $\eps := 1$, $\eta := \xi^*$, $\alpha := \alpha_n := n$, $\beta := (1,\dots,1)$ and letting $n \to \infty$, we see from~\eqref{eq:solutions-yield-saddle-points-step2-2} that
\begin{align} \label{eq:solutions-yield-saddle-points-step2-3}
\kappa \ge 0.
\end{align}
Setting $\eps := 1$, $\eta := \xi^*$, $\alpha := 0$, $\beta := \beta_{ni} := (1,\dots,1) + n e_i$ with $e_i$ the $i$th canonical unit vector of $\R^{\Ieq}$ and letting $n \to \infty$, we see from~\eqref{eq:solutions-yield-saddle-points-step2-2} that
\begin{align} \label{eq:solutions-yield-saddle-points-step2-4}
\lambda_i \ge 0 \qquad (i \in \Iineq).
\end{align}
Additionally, by setting $\eps := \eps_n := 1/n$, $\alpha := 0$, $\beta := \beta_n := (1/n,\dots,1/n)$ and by letting $n\to\infty$, we conclude from~\eqref{eq:solutions-yield-saddle-points-step2-2} that
\begin{align} \label{eq:solutions-yield-saddle-points-step2-5}
\kappa \partial_{\eta-\xi^*}\Psi_0(\xi^*) + \sum_{i \in \Iineq} \lambda_i \big( \Psi_i(\xi^*) + \partial_{\eta-\xi^*}\Psi_i(\xi^*)\big) + \sum_{i\in\Ieq} \lambda_i \Psi_i(\eta) \ge 0 \qquad (\eta \in \Xi(X)).
\end{align}
Setting $\eta := \xi^*$ in~\eqref{eq:solutions-yield-saddle-points-step2-5} and using~\eqref{eq:solutions-yield-saddle-points-step2-4}, we immediately obtain the claimed complementarity relation~\eqref{eq:solutions-yield-saddle-points-step2.2}. Inserting~\eqref{eq:solutions-yield-saddle-points-step2.2} into~\eqref{eq:solutions-yield-saddle-points-step2-5}, in turn, we also obtain the claimed relation~\eqref{eq:solutions-yield-saddle-points-step2.1}, as desired.
\smallskip

As a third step, we show that for every non-zero vector $(\kappa,\lambda) \in [0,\infty) \times \Lambda$ as in the second step, we actually have $\kappa > 0$. 
So, let $(\kappa,\lambda) \in [0,\infty) \times \Lambda$ be a non-zero vector such that~\eqref{eq:solutions-yield-saddle-points-step2.1} and~\eqref{eq:solutions-yield-saddle-points-step2.2} are satisfied. We argue by contradiction and therefore assume that $\kappa = 0$. It then follows by~\eqref{eq:solutions-yield-saddle-points-step2-5} that
\begin{align} \label{eq:solutions-yield-saddle-points-step3-1}
\sum_{i \in \Iineq} \lambda_i \big( \Psi_i(\xi^*) + \partial_{\eta-\xi^*}\Psi_i(\xi^*)\big) + \sum_{i\in\Ieq} \lambda_i \Psi_i(\eta) \ge 0
\qquad (\eta \in \Xi(X)).
\end{align}
It further follows by~\eqref{eq:bound-on-differences-of-Psiis-in-terms-of-sensi-fcts-step3} and our strict feasibility condition (Condition~\ref{cond:slater-and-non-one-sidedness}~(i)) that 
\begin{align} \label{eq:solutions-yield-saddle-points-step3-2}
\Psi_i(\xi^*) + \partial_{\eta^*-\xi^*}\Psi_i(\xi^*) \le \Psi_i(\eta^*) < 0 \qquad (i\in\Iineq) 
\qquad \text{and} \qquad
\Psi_i(\eta^*) = 0 \qquad (i\in\Ieq)
\end{align}
for some $\eta^* \in \Xi(X)$. 
With the help of~\eqref{eq:solutions-yield-saddle-points-step2-4} and~\eqref{eq:solutions-yield-saddle-points-step3-2} we then conclude from~\eqref{eq:solutions-yield-saddle-points-step3-1} that we must have
\begin{align} \label{eq:solutions-yield-saddle-points-step3-3}
\lambda_i = 0 \qquad (i\in\Iineq).
\end{align}
Inserting~\eqref{eq:solutions-yield-saddle-points-step3-3} into~\eqref{eq:solutions-yield-saddle-points-step3-1}, we further conclude by our non-one-sidedness condition for $\Psieq(\Xi(X))$ (Condition~\ref{cond:slater-and-non-one-sidedness}~(ii)) that we must also have
\begin{align} \label{eq:solutions-yield-saddle-points-step3-4}
\lambda_i = 0 \qquad (i\in\Ieq).
\end{align}
So, by~\eqref{eq:solutions-yield-saddle-points-step3-3} and~\eqref{eq:solutions-yield-saddle-points-step3-4}, we obtain $(\kappa,\lambda) = (0,0)$. Contradiction to the non-zeroness of $(\kappa,\lambda)$!
\smallskip

As a fourth step, we finally show that there exists a multiplier $\lambda^* \in \Lambda$ such that $(\xi^*,\lambda^*)$ is a saddle point of~\eqref{eq:COED(X)}. 
In order to see this, we invoke the second and the third step to choose a non-zero vector $(\kappa,\lambda) \in (0,\infty) \times \Lambda$ such that~\eqref{eq:solutions-yield-saddle-points-step2.1} and~\eqref{eq:solutions-yield-saddle-points-step2.2} are satisfied, and define
\begin{align} \label{eq:solutions-yield-saddle-points-step4-1}
\lambda^* := \lambda/\kappa \in \Lambda.
\end{align}  
It then follows by~\eqref{eq:bound-on-differences-of-Psiis-in-terms-of-sensi-fcts-step3} and~\eqref{eq:solutions-yield-saddle-points-step2.1} that
\begin{align} \label{eq:solutions-yield-saddle-points-step4-2}
L(\eta,\lambda^*) - L(\xi^*,\lambda^*) 
&\ge \partial_{\eta-\xi^*} \Psi_0(\xi^*) + \sum_{i\in\Iineq} \lambda_i^* \partial_{\eta-\xi^*} \Psi_i(\xi^*) + \sum_{i\in\Ieq} \lambda_i^* \Psi_i(\eta) \notag\\
&\ge 0 \qquad (\eta \in \Xifin(X)).
\end{align}
It further follows by \eqref{eq:solutions-yield-saddle-points-step2.2} that
\begin{align} \label{eq:solutions-yield-saddle-points-step4-3}
L(\xi^*,\lambda^*) - L(\xi^*,\mu) = -\sum_{i\in\Iineq} \mu_i \Psi_i(\xi^*) \ge 0 \qquad (\mu \in \Lambda).
\end{align}
In view of~\eqref{eq:solutions-yield-saddle-points-step4-2} and~\eqref{eq:solutions-yield-saddle-points-step4-3}, it is now clear that $(\xi^*,\lambda^*)$ is indeed a saddle point of~\eqref{eq:COED(X)}, as desired.
\end{proof}

\section{Adaptive discretization algorithm}\label{sec:adaptive-disc-algorithm}

In this section, we formally introduce our algorithms to compute optimal experimental designs for~\eqref{eq:COED(X)}. 
As already indicated in the introduction, our algorithms are adaptive discretization algorithms which in every iteration proceed in two steps.
In the first step, a saddle point $(\xi^k,\lambda^k)$ of the discretized design problem 
\begin{align} 
	\label{eq:COED(X^k)-short}
	\min_{\xi \in \Xifeas (X^k)} \Psi_0(\xi)
\end{align} 
is computed, where $X^k$ is the current discretization of the design space $X$. 
In view of the finiteness of $X^k$, every design $\xi \in \Xifeas(X^k)$ is of the form $\xi = \sum_{x \in X} w_x \delta_x$ and thus, by the convexity and affinity of the objective and constraint functions, the discretized design problem~\eqref{eq:COED(X^k)-short}  is a finite-dimensional (namely, $|X^k|$-dimensional) convex optimization problem, which is equivalent to the convex weight optimization problem
\begin{align}
	\label{eq:weight-optimization-reformulation-of-the-discretized-design-problem}
	\min_{w \in \Delta(X^k)} \tilde{\Psi}_0^k(w)
	\quad \text{s.t.} \quad 
	\tilde{\Psi}_i^k(w) \le 0 \text{ for all } i \in \Iineq \text{ and } \tilde{\Psi}_i^k(w) = 0 \text{ for all } i \in \Ieq
\end{align} 
with $\Delta(X^k) := \{w \in [0,\infty)^{X^k}: \sum_{x \in X^k} w_x = 1\}$ and $\tilde{\Psi}_i^k(w) := \Psi_i(\sum_{x \in X^k} w_x \delta_x)$ for $i \in I \cup \{0\}$. 
In the second step, the discretization is updated in an adaptive manner. Specifically, the new discretization
\begin{align}
	\label{eq:new-discretization}
	X^{k+1} = X^k \cup \{x^k\}
\end{align} 
arises from the old one by adding one point, namely an  approximate violator $x^k$ of the sufficient $\eps$-optimality condition~(\ref{eq:minimum-of-sensi-fct-determines-optimality-gap-2}.b) in the sense of~\eqref{eq:approximate-violator-intro}.
In the first version of our algorithm -- the special algorithm -- we set the optimality tolerance $\eps = 0$  in~(\ref{eq:minimum-of-sensi-fct-determines-optimality-gap-2}.b) and compute $x^k$ as a $\ul{\delta}_k$-approximate solution of the sensitivity minimization problem~\eqref{eq:sensi-minimization-problem-intro}. 
As usual, this means that 
\begin{align}
	x^k \in X 
	\qquad \text{and} \qquad
	\psi^L(\xi^k,\lambda^k,x^k) \le \inf_{x \in X} \psi^L(\xi^k,\lambda^k,x) + \ul{\delta}_k
\end{align}
or, in other words, that $\psi^L(\xi^k,\lambda^k,x^k) - \ul{\delta}_k$ is a lower bound for the sensitivity function $X \ni x \mapsto \psi^L(\xi^k,\lambda^k,x)$ at $(\xi^k,\lambda^k)$. 
In the case where this lower bound is larger than or equal to $0$, the current iterate $\xi^k$ already is an optimal design for~\eqref{eq:COED(X)} (Corollary~\ref{cor:minimum-of-sensi-fct-determines-optimality-gap}) and we can terminate the iteration. In the opposite case, we continue the iteration with the new discretization~\eqref{eq:new-discretization}. 

\begin{algo} \label{algo:COED-special}
	Input: a non-empty finite subset $X^0$ of $X$ and tolerances $\ul{\delta}_k \in (0,\infty)$. Initialize $k=0$ and proceed in the following steps.
	\begin{itemize}
		\item[1.] Compute a saddle point $(\xi^k,\lambda^k) \in \Xi(X^k) \times \Lambda$ of the discretized optimal design problem~\eqref{eq:COED(X^k)-short}
		
		\item[2.] Compute a $\ul{\delta}_k$-approximate solution $x^k \in X$ of the sensitivity minimization problem
		\begin{align} \label{eq:SM(xi^k,lambda^k)-special}
		\min_{x\in X} \psi^L(\xi^k,\lambda^k,x).
		\end{align}
		If $\psi^L(\xi^k,\lambda^k,x^k) - \ul{\delta}_k < 0$, then set $X^{k+1} := X^k \cup \{x^k\}$ and return to Step~1 with $k$ replaced by $k+1$. If $\psi^L(\xi^k,\lambda^k,x^k) - \ul{\delta}_k \ge 0$, then terminate. 
	\end{itemize}
\end{algo}

In almost all applications, the sensitivity minimization problem~\eqref{eq:SM(xi^k,lambda^k)-special} is a nonlinear 
optimization problem so that its approximate solution is computationally expensive, in general. Also, the special algorithm above is not guaranteed to terminate after finitely many iterations. 
In the second version of our algorithm -- the general algorithm -- we therefore confine the approximate solution of the sensitivity minimization problems to as few iterations as possible and, moreover, we work with  positive optimality tolerances $\eps > 0$ to guarantee finite termination. 
Specifically, in every iteration we first search for an arbitrary violator of the sufficient $\eps$-optimality condition~(\ref{eq:minimum-of-sensi-fct-determines-optimality-gap-2}.b) at $(\xi^k,\lambda^k)$, that is, for a point $x^k \in X$ with 
\begin{align}
	\psi^L(\xi^k,\lambda^k,x^k) < -\eps.
\end{align}
We can use an arbitrary finite search routine for this purpose, typically a multistart local search. And only if this search routine finds no violator, do we have to  compute $x^k$ as a $\ul{\delta}_k$-approximate solution of~\eqref{eq:SM(xi^k,lambda^k)-special} as before (to be sure that at termination we have an $\eps$-optimal design for~\eqref{eq:COED(X)}).

\begin{algo} \label{algo:COED-general}
	Input: a non-empty finite subset $X^0$ of $X$, tolerances $\ul{\delta}_k \in (0,\infty)$, a tolerance $\eps \in [0,\infty)$. Initialize $k = 0$ and proceed in the following steps.
	\begin{itemize}
		\item[1.] Compute a saddle point $(\xi^k,\lambda^k) \in \Xi(X^k) \times \Lambda$ of the discretized optimal design problem~\eqref{eq:COED(X^k)-short}
		
		\item[2.] Search for a violator of the $\eps$-optimality condition~(\ref{eq:minimum-of-sensi-fct-determines-optimality-gap-2}.b) at $(\xi^k,\lambda^k)$, that is, for a point in the set $V^k := \{x \in X: \psi^L(\xi^k,\lambda^k,x) < -\eps\}$, using an arbitrary finite search routine. 
		\begin{itemize}
			\item If the search routine finds a point $x^k \in V^k$, then set $X^{k+1} := X^k \cup \{x^k\}$ and return to Step 1 with $k$ replaced by $k+1$.
			
			\item If the search routine finds no point in $V^k$, then compute a $\ul{\delta}_k$-approximate solution $x^k$ of the sensitivity minimization problem
			\begin{align} \label{eq:SM(xi^k,lambda^k)-general}
				\min_{x \in X} \psi^L(\xi^k,\lambda^k,x).
			\end{align}
			If $\psi^L(\xi^k,\lambda^k,x^k) - \ul{\delta}_k < -\eps$, then set $X^{k+1} := X^k \cup \{x^k\}$ and return to Step~1 with $k$ replaced by $k+1$. If $\psi^L(\xi^k,\lambda^k,x^k) - \ul{\delta}_k \ge -\eps$, then terminate.
		\end{itemize}
	\end{itemize} 
\end{algo}

It should be noticed that Algorithm~\ref{algo:COED-special} is a special case of Algorithm~\ref{algo:COED-general}. In order to see this, simply set $\eps = 0$ and choose the search routine to be empty (that is, to return nothing). In particular, our terminology 
of calling Algorithms~\ref{algo:COED-special} and~\ref{algo:COED-general} the special or, respectively, the general algorithm is justified. Additionally, this shows that all results formulated for Algorithm~\ref{algo:COED-general} with general $\eps \in [0,\infty)$ are valid especially for Algorithm~\ref{algo:COED-special}. 
\smallskip

It should also be noticed that in the special case with no constraints in~\eqref{eq:COED(X)} (that is, with $\Iineq = \emptyset$ and $\Ieq = \emptyset$), Algorithm~\ref{algo:COED-general} reduces to the unconstrained-design algorithm without exchange from~\cite{ScSeBo} (Algorithm~3.6 of~\cite{ScSeBo} with $\ol{\delta}_k = 0$ for all $k \in \N_0$).

\subsection{Well-definedness}

In order for our algorithms to be well-defined, we need to impose Conditions~\ref{cond:objective-and-constraint-fcts} and~\ref{cond:sensi-fcts}, of course, because otherwise the sensitivity function $\psi^L$ used in the second step of our algorithms would not even be defined. In addition to that, we need to ensure 
\begin{itemize}
	\item the existence of a saddle point $(\xi^k,\lambda^k)$ for every discretized design problem 
	
	\item the existence of a $\ul{\delta}_k$-approximate solution $x^k$ for every sensitivity minimization problem 
\end{itemize}
occurring in the course of the algorithms, because otherwise the first step and the second step of our algorithms would not be well-defined. 
%
%
%
As we will see in the next lemma, there is a simple sufficient condition for the above requirements, formulated solely in terms of the initial discretized design problem. Indeed, all we need for the above requirements to be satisfied is a strictly feasible point $\eta^0$ for the initial discretized design problem $\CD(X^0)$ with $\eta^0 \in \dom \Psi_0$ and the non-one-sidedness of $\Psieq(\Xi(X^0))$. As has been shown above (Lemma~\ref{lm:sufficient-conds-for-lin-slater-and-non-one-sidedness-cond}), this non-one-sidedness of $\Psieq(\Xi(X^0))$ is implied by the non-one-sidedness of $\gequ(X^0)$ w.r.t.~$0$.

\begin{cond} \label{cond:reg-conditions-on-X^0}
	\begin{itemize}
		\item[(i)] $\CD(X^0)$ has a strictly feasible point $\eta^0$ with $\Psi_0(\eta^0) < \infty$. In other words, there exists an
		\begin{align} \label{eq:strictly-feasible-design-on-X^0}
			\eta^0 \in \Xifin(X^0) 
			\quad \text{with} \quad
			\Psi_i(\eta^0) < 0 \quad (i \in \Iineq) \quad \text{and} \quad
			\Psi_i(\eta^0) = 0 \quad (i \in \Ieq).
		\end{align} 
		
		\item[(ii)] $\Psieq(\Xi(X^0))$ is not one-sided w.r.t.~$0$. 
	\end{itemize}
\end{cond}

It should be noticed that if the original problem~\eqref{eq:COED(X)} has a strictly feasible point $\xi^*$ with $\Psi_0(\xi^*) < \infty$, 
then -- in the situation of Corollary~\ref{cor:opt-design-with-finite-support} -- there also is a finite subset $X^0$ such that the discretized version $\CD(X^0)$ has a strictly feasible point $\eta^0$ with $\Psi_0(\eta^0) < \infty$. 
Indeed, this immediately follows by the proof of Corollary~\ref{cor:opt-design-with-finite-support}. 

\begin{lm} \label{lm:sufficient-cond-for-saddle-points-existence-and-reg-conditions}
Suppose that Conditions~\ref{cond:objective-and-constraint-fcts} and~\ref{cond:sensi-fcts} are satisfied. Suppose further that $\ul{\delta}_k \in (0,\infty)$ and $\eps \in [0,\infty)$ are tolerances and $X^0 \ne \emptyset$ is a finite subset of $X$ such that Condition~\ref{cond:reg-conditions-on-X^0} is satisfied. If Algorithm~\ref{algo:COED-general} is initialized with such tolerances $\ul{\delta}_k, \eps$ and such an initial discretization $X^0$, then the algorithm is well-defined, that is, for every iteration index $k$, 
\begin{itemize}
	\item[(i)] the discretized design problem~\eqref{eq:COED(X^k)-short} has a saddle point, and
	
	\item[(ii)] the sensitivity minimization problem~\eqref{eq:SM(xi^k,lambda^k)-general} has a $\ul{\delta}_k$-approximate solution.
\end{itemize} 
\end{lm}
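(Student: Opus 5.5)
We argue by induction over the iteration index $k$ and treat both assertions for each $k$ as follows. Throughout, $X^k$ is a finite, hence closed, non-empty subset of $X$ with $X^k \supset X^0$, because by construction $X^{k+1} = X^k \cup \{x^k\}$.

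For (i), fix $k$. The first task is to place the discretized problem $\CD(X^k)$ within the scope of the earlier results. Conditions~\ref{cond:objective-and-constraint-fcts} and~\ref{cond:sensi-fcts} carry over to the compact metric space $X^k$ in place of $X$. This is because the canonical embedding $\Xi(X^k) \hookrightarrow \Xi(X)$ is continuous and affine, so (semi)continuity, convexity and affinity are inherited. Directional differentiability with sensitivity functions $\psi_i(\xi,\cdot)|_{X^k}$ is also inherited: directions $\eta \in \Xi(X^k)$ are special directions in $\Xi(X)$, and the integral over $X$ of $\psi_i(\xi,\cdot)$ against such $\eta$ equals the integral over $X^k$. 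Continuity of the restricted sensitivity functions is automatic on the finite discrete space $X^k$, and case (b) of Condition~\ref{cond:sensi-fcts}~(ii) persists trivially.

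Next, Condition~\ref{cond:slater-and-non-one-sidedness} holds for $\CD(X^k)$. This follows from the proof of Lemma~\ref{lm:sufficient-conds-for-lin-slater-and-non-one-sidedness-cond}, applied with $X^* := X^0$ and $X^{**} := X^k$; this is exactly why that proof was carried out for general $X^{**} \supset X^*$. Moreover, $\eta^0 \in \Xifin(X^0) \subset \Xifin(X^k) \cap \Xifeas(X^k)$, so $\Xifin(X^k) \cap \Xifeas(X^k) \neq \emptyset$. By Theorem~\ref{thm:solvability}, $\CD(X^k)$ then has a solution $\xi^k$. Since $\Psi_0(\xi^k) \le \Psi_0(\eta^0) < \infty$ and $\xi^k$ is feasible, we get $\xi^k \in \Xifin(X^k)$, using that the equality functions are real-valued and that feasibility gives $\Psi_i(\xi^k) \le 0 < \infty$ for the inequality functions. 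Theorem~\ref{thm:solutions-yield-saddle-points} applied on $X^k$ then yields $\lambda^k$ such that $(\xi^k,\lambda^k)$ is a saddle point.

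One subtlety needs checking here. A saddle point of $\CD(X^k)$ is defined via $\Xifin(X^k)$, and this is the notion required in Step~1 of the algorithm, so the conclusion matches what Step~1 needs. We also need $\xi^k \in \Xifin(X)$, which holds by the identification $\Xi(X^k) \subset \Xi(X)$. Consequently $\psi^L(\xi^k,\lambda^k,\cdot)$ is defined on all of $X$.

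For (ii), we use Lemma~\ref{lm:bound-on-differences-of-Psiis-in-terms-of-sensi-fcts}, equation~\eqref{eq:inf-psiL-is-finite}, which gives that $m_k := \inf_{x\in X}\psi^L(\xi^k,\lambda^k,x)$ is a real number. Since $\ul{\delta}_k > 0$, the definition of the infimum provides some $x^k \in X$ with $\psi^L(\xi^k,\lambda^k,x^k) < m_k + \ul{\delta}_k$, and such an $x^k$ is a $\ul{\delta}_k$-approximate solution of~\eqref{eq:SM(xi^k,lambda^k)-general}. The regardless-of-branch new discretization $X^{k+1}$ is again a finite superset of $X^0$, which closes the induction.

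The main obstacle I expect is the bookkeeping in (i): verifying rigorously that the earlier theorems, stated for a general compact $X$, apply verbatim to $X^k$ with the restricted sensitivity functions. In particular, directional differentiability of $\Psi_i|_{\dom \Psi_i \cap \Xi(X^k)}$ with respect to $\Xi(X^k)$ must be checked, and it follows directly from the definition by restricting the admissible directions.
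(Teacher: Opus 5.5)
Your proposal is correct and follows essentially the same route as the paper: you transfer Conditions~\ref{cond:objective-and-constraint-fcts} and~\ref{cond:sensi-fcts} to $X^k$, obtain Condition~\ref{cond:slater-and-non-one-sidedness} on $X^k$ from the proof of Lemma~\ref{lm:sufficient-conds-for-lin-slater-and-non-one-sidedness-cond} with $X^* := X^0$ and $X^{**} := X^k$, and combine Theorem~\ref{thm:solvability} with Theorem~\ref{thm:solutions-yield-saddle-points} for (i); for (ii) you use \eqref{eq:inf-psiL-is-finite} together with $\ul{\delta}_k > 0$. Your explicit check that $\xi^k \in \Xifin(X^k)$ is a detail the paper leaves implicit, but there is one small slip: the inclusion should read $\eta^0 \in \Xifin(X^0) \cap \Xifeas(X^0) \subset \Xifin(X^k) \cap \Xifeas(X^k)$, because $\Xifin(X^0)$ itself is not contained in $\Xifeas(X^k)$.
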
 

\begin{proof}
Suppose that Algorithm~\ref{algo:COED-general} is initialized with tolerances $\ul{\delta}_k, \eps$ and an initial discretization $X^0$ as described above. Also, let $K$ denote the set of iteration indices.
\smallskip

(i) We first show that for every $k \in K$ the discretized design problem $\CD(X^k)$ has a saddle point $(\xi^k,\lambda^k)$. 
In order to do so, we show that the assumptions of our saddle-point existence result (Theorem~\ref{thm:solutions-yield-saddle-points}) are satisfied with $X$ replaced by $X^k$. So, let $k \in K$ be  an arbitrary iteration index of the algorithm. 
We first observe that Conditions~\ref{cond:objective-and-constraint-fcts} and~\ref{cond:sensi-fcts} are also satisfied with $X$ replaced by any closed subset $X^* \ne \emptyset$ of $X$. In particular, Condition~\ref{cond:objective-and-constraint-fcts} is satisfied for the finite (hence closed) subset $X^* := X^k$ (which is non-empty because $X^k \supset X^0 \ne \emptyset$). 
We further observe that 
\begin{align}
\Xifin(X^k) \cap \Xifeas(X^k) \supset \Xifin(X^0) \cap \Xifeas(X^0) 
\ne \emptyset
\end{align} 
because $X^k \supset X^0$ by the definition of our algorithm and because $\eta^0$, as a strictly feasible point of $\CD(X^0)$ with $\eta^0 \in \dom \Psi_0$, belongs to $\Xifin(X^0) \cap \Xifeas(X^0)$. In view of these observations, it follows by the solvability result (Theorem~\ref{thm:solvability}) with $X$ replaced by $X^k$  that the problem $\CD(X^k)$ has a solution $\xi^k$. 
It further follows by the proof of Lemma~\ref{lm:sufficient-conds-for-lin-slater-and-non-one-sidedness-cond} (with $X^* := X^0$ and $X^{**} := X^k$) that Condition~\ref{cond:slater-and-non-one-sidedness} is satisfied with $X$ replaced by $X^k$. 
Consequently, by the saddle-point existence result (Theorem~\ref{thm:solutions-yield-saddle-points}) with $X$ replaced by $X^k$, we wee that there exists a multiplier $\lambda^k \in \Lambda$ such that $(\xi^k,\lambda^k)$ is a saddle point of $\CD(X^k)$. In particular, $\CD(X^k)$ has a saddle point, as desired. 
\smallskip

(ii) We now show that for every $k \in K$ the sensitivity minimization problem~\eqref{eq:SM(xi^k,lambda^k)-general} has a $\ul{\delta}_k$-approximate solution. 
So, let $k \in K$ be an arbitrary iteration index. Since the discretized design problem~\eqref{eq:COED(X^k)-short} has a saddle point $(\xi^k,\lambda^k)$ by part (i) and since $\xi^k \in \Xifin(X^k) \subset \Xifin(X)$ by the very definition of saddle points, we see from~\eqref{eq:inf-psiL-is-finite} that
\begin{align}
	\inf_{x \in X} \psi^L(\xi^k,\lambda^k,x) \in \R
\end{align} 
is a finite real number. So, by the assumed strict positivity of $\ul{\delta}_k$, 
there exists an $x^k \in X$ with $\psi^L(\xi^k,\lambda^k,x^k) \le \inf_{x \in X} \psi^L(\xi^k,\lambda^k,x) + \ul{\delta}_k$, that is, a $\ul{\delta}_k$-approximate solution of~\eqref{eq:SM(xi^k,lambda^k)-general}, as desired.
\end{proof}

\subsection{Comparison with known constrained-design  algorithms}

In this section, we compare our algorithms with the most closely related constrained-design algorithms from the literature, namely the algorithms from~\cite{CoFe95, Ga86, MoZu02}. As these papers, in parts, use a rather different notation and, in the case of~\cite{Ga86, MoZu02}, a  different notion of sensitivity functions, we spell out the algorithms here for the sake of a more convenient and transparent comparison. 
See Sections~\ref{sec:Cook-Fedorov}-\ref{sec:Molchanov-Zuyev} for the spelled-out algorithms and Table~\ref{tab:algorithm-comparison} for a quick overview of the algorithmic differences. 
\smallskip

\begin{center}
	\begin{table}
		\caption{Subproblems occurring in the iterations of the compared algorithms}
		\label{tab:algorithm-comparison}
		\begin{tabularx}{\linewidth}{*{5}{C}}
			\toprule
			& Algorithm by Cook and Fedorov & Algorithm by Gaivoronski & Algorithm by Molchanov and Zuyev & Algorithm proposed here \\
			\midrule
			Update of the design based on & solution of descent direction problem and step-size optimization & solution of descent direction problem and prescribed step sizes & solution of descent direction problem in the space of signed measures & solution of discretized design optimization problem\\
			\midrule
			Structure and dimension of the design updating problem & convex $1$-dimensional (step-size optimization) & - & - & convex $|X^k|$-dimensional (discretized design optimization)\\
			\midrule
			Structure and dimension of the descent direction problem & nonlinear $(|\Iineq|+1) (\dimx + 1)$-dimensional  & linear $|X^{k+1}|$-dimensional & nonlinear $(|\Ieq|+1) \dimx$-dimensional & -\\
			\midrule
			Structure and dimension of the discretization updating problem & - & nonlinear $\dimx$-dimensional & - & nonlinear $\dimx$-dimensional\\
			\midrule
			Approximate or exact solution of discretization updating problem & - & exact solution in every iteration & - & approximate solution, only in iterations where no violator is found\\
			\bottomrule
		\end{tabularx}
	\end{table}
\end{center}


An important -- and arguably the most important --  algorithmic difference 
concerns the way how the design is updated in every iteration. Specifically, in our algorithms, the design is re-optimized in every iteration by re-solving the discretized design problem for the new discretization, analogously to~\cite{ScSeBo, YaBiTa13}. In~\cite{CoFe95, Ga86, MoZu02}, by contrast, the new design 
\begin{align}
	\xi^{k+1} = \xi^k + \alpha_k (\eta^k - \xi^k)
\end{align}
is obtained from the previous design $\xi^k$ by proceeding in a suitable descent direction $\eta^k - \xi^k$ with a suitable step size $\alpha_k$, analogously to the classical vertex-direction algorithm~\cite{Fe, Wy70}. In fact, in the special case without constraints, the algorithms from~\cite{CoFe95, Ga86, MoZu02} all reduce to the classical vertex-direction algorithm (each with a different step-size rule). 
\smallskip

\begin{center}
	\begin{table}[h!]
		\caption{Settings covered by the compared algorithms}
		\label{tab:setting-comparison}
		\begin{tabularx}{\linewidth}{*{5}{C}}
			\toprule
			& Algorithm by Cook and Fedorov & Algorithm by Gaivoronski & Algorithm by Molchanov and Zuyev & Algorithm proposed here \\
			\midrule
			Convex inequality constraints & no & yes, but boundedness of multiplier sequence established only for affine constraints & no & yes\\
			\midrule
			Affine inequality constraints & yes & yes & no & yes\\
			\midrule
			Affine equality constraints & yes & no & yes & yes\\
			\midrule
			Infinite criterion or constraint value & yes & no & no & yes\\
			\midrule
			Discontinuous sensitivity functions & no & no & no & yes\\
			\bottomrule
		\end{tabularx}
	\end{table}
\end{center}

Another important algorithmic difference concerns the computational effort coming with the nonlinear subproblems of the algorithms. Specifically, in our algorithms the nonlinear subproblems are the sensitivity minimization problems~\eqref{eq:SM(xi^k,lambda^k)-general}, which are obviously $\dimx$-dimensional. 
In the algorithms from~\cite{CoFe95, MoZu02}, by contrast, the nonlinear subproblems are the descent-direction problems~\eqref{eq:CoFe-descent-direction-finite-dimensional} and~\eqref{eq:MoZu-descent-direction-proxy}, which have the -- often considerably -- larger dimensions 
\begin{align}
	(|\Iineq|+1) (\dimx + 1)
	\qquad \text{and} \qquad
	(|\Ieq|+1) \dimx,
\end{align}
respectively. In~\cite{Ga86}, in turn, the nonlinear subproblems~\eqref{eq:Ga-discretization-updating-problem} serve the purpose of updating a discretization just like our sensitivity minimization problems~\eqref{eq:SM(xi^k,lambda^k)-general} and, in fact, these problems are equivalent to~\eqref{eq:SM(xi^k,lambda^k)-general} (even though they arise from a different optimality condition of a different optimization problem, namely~\eqref{eq:Ga-linearized-descent-direction} and~\eqref{eq:Ga-minimax}). What is different in~\cite{Ga86} is that the nonlinear subproblems occurring there are solved exactly in every iteration, whereas our algorithms require only approximate solutions and, in the case of the general algorithm, only in those iterations where the local search for violators of~\eqref{eq:eps-optimality-condition-intro} fails.
\smallskip

\begin{center}
	\begin{table}[h!]
		\caption{Convergence analysis carried out for the compared algorithms 
		}
		\label{tab:convergence-analysis-comparison}
		\begin{tabularx}{\linewidth}{*{5}{C}}
			\toprule
			& Algorithm by Cook and Fedorov & Algorithm by Gaivoronski & Algorithm by Molchanov and Zuyev & Algorithm proposed here \\
			\midrule
			Iterates guaranteed to be feasible & yes & no & yes & yes\\
			\midrule
			Convergence or termination results & convergence result for $(\Psi_0(\xi^k))$ & convergence result for $(\Psi_0(\xi^k))$ & none & convergence result for $(\Psi_0(\xi^k))$ and $(\xi^k)$ and termination result\\
			\bottomrule
		\end{tabularx}
	\end{table}
\end{center}

Apart from these purely algorithmic differences, there are also important differences concerning the setting covered by the algorithms and the convergence results for the algorithms. See Sections~\ref{sec:Cook-Fedorov}-\ref{sec:Molchanov-Zuyev} for details and Tables~\ref{tab:setting-comparison} and~\ref{tab:convergence-analysis-comparison} for a quick overview. A short remark on the affine equality constraints in Table~\ref{tab:setting-comparison} is in order here. Affine equality constraints are not explicitly considered in~\cite{CoFe95, Ga86}, but the convergence result of~\cite{CoFe95} allows their reformulation as pairs of affine inequality constraints, whereas the convergence result of~\cite{Ga86} does not (because of its strict feasibility assumption).

\subsubsection{Algorithm by Cook and Fedorov}
\label{sec:Cook-Fedorov}

In~\cite{CoFe95} (Section 2.2), the following algorithm is proposed to solve optimal design problems~\eqref{eq:COED(X)} with affine  inequality constraints $\Psi_i$ with $\dom\Psi_i = \Xi(X)$. 
Specifically, the setting considered there can be described as the special case of Conditions~\ref{cond:objective-and-constraint-fcts} and~\ref{cond:sensi-fcts} where 
\begin{equation} \label{eq:CoFe-setting}
	\begin{gathered} 
		I = \Iineq, \qquad \Ieq = \emptyset, \qquad \Inc = \emptyset, \qquad \Xifin(X) = \dom \Psi_0\\
		\Psi_i(\xi) := \int_X g_i(x) \d\xi(x) 
		\qquad (\xi \in \Xi(X))
	\end{gathered}
\end{equation}
for continuous functions $g_i: X \to \R$ for $i \in I$. 
In essence, this algorithm repeatedly solves the linear descent-direction problems 
\begin{align} \label{eq:CoFe-descent-direction}
	\min_{\eta \in \Xi(X)} \partial_{\eta - \xi^k} \Psi_0(\xi^k) 
	\quad \text{s.t.} \quad
	\Psi_i(\eta) \le 0 \qquad (i \in \Iineq)
\end{align}
of the original problem and exploits the fact that -- by the affinity of the constraints -- each of these problems has a solution with at most $|\Iineq| + 1$ support points (Note 1 of~\cite{CoFe95}). So, by restricting the search space of~\eqref{eq:CoFe-descent-direction} to designs $\eta$ with $|\supp \eta| \le |\Iineq| + 1$, the infinite-dimensional linear optimization problem can be replaced by a finite-dimensional (nonlinear) problem, namely~\eqref{eq:CoFe-descent-direction-finite-dimensional} below. In this context, we abbreviate $m := |\Iineq|$ and denote the $m$-dimensional standard probability simplex by
\begin{align}
	\Delta_m := \{w \in [0,\infty)^{m+1}: w_1 + \dotsb + w_{m+1} = 1\}.
\end{align}

\begin{algo} \label{algo:CoFe}
	Input: a design $\xi^0 \in \Xifin(X) \cap \Xifeas(X)$. 
	With this input at hand, set $k = 0$ and proceed in the following steps:
	\begin{itemize}
		\item[1.] Compute a solution $\eta^k \in \Xi(X)$ of the linear descent-direction problem~\eqref{eq:CoFe-descent-direction} with $|\supp \eta^k| \le m+1$. In order to do so, compute a solution $(\tilde{x}^k,w^k) = (x^{k,1}, \dots, x^{k,m+1}, w^k) \in X^{m+1} \times \Delta_m$ of the nonlinear optimization problem
		\begin{align} \label{eq:CoFe-descent-direction-finite-dimensional}
			\min_{(\tilde{x},w) \in X^{m+1} \times \Delta_m} \sum_{j=1}^{m+1} w_j \psi_0(\xi^k,x^j)
			\quad \text{s.t.} \quad
			\sum_{j=1}^{m+1} w_j g_i(x^j) \le 0 
			\qquad (i \in \Iineq)
		\end{align}
		and then set $\eta^k := \sum_{j=1}^{m+1} w_j^k \delta_{x^{k,j}}$.
		
		\item[2.] Compute a solution $\alpha_k \in [0,1]$ of the convex step-size problem
		\begin{align}
			\min_{\alpha \in [0,1]} \Psi_0(\xi^k + \alpha(\eta^k-\xi^k)).
		\end{align}
		In case the minimal value $\partial_{\eta^k-\xi^k}\Psi_0(\xi^k)$ of~\eqref{eq:CoFe-descent-direction} or, equivalently, of~\eqref{eq:CoFe-descent-direction-finite-dimensional} is non-negative, then terminate. In the opposite case, update the design by setting
		\begin{align}
			\xi^{k+1} := (1-\alpha_k) \xi^k + \alpha_k \eta^k
		\end{align}
		and return to Step 1 with $k$ replaced by $k+1$.
	\end{itemize}
\end{algo}

It is straightforward to verify that the iterates $\xi^k$ of the above algorithm are all feasible for~\eqref{eq:COED(X)} 
and belong to $\Xifin(X)$, in short: $\xi^k \in \Xifin(X) \cap \Xifeas(X)$ for all iteration indices $k$. 
%
It is also not difficult to prove that 
in case of termination, the algorithm terminates at a solution of~\eqref{eq:COED(X)}. Indeed, this follows from the easily established fact that for $\xi^k \in \Xifin(X) \cap \Xifeas(X)$, the non-negativity of the optimal value of~\eqref{eq:CoFe-descent-direction} implies the optimality of $\xi^k$ for~\eqref{eq:COED(X)}. 
\smallskip

Concerning convergence of the algorithm, the following result is established 
in~\cite{CoFe95} (Theorem~7): In the aforementioned special case~\eqref{eq:CoFe-setting} of Conditions~\ref{cond:objective-and-constraint-fcts} and~\ref{cond:sensi-fcts}, the criterion values $\Psi_0(\xi^k)$ converge monotonically decreasingly to the optimal value of~\eqref{eq:COED(X)},
\begin{align}
	\Psi_0(\xi^k) \searrow \min_{\xi\in\Xifeas(X)} \Psi_0(\xi) \qquad (k\to\infty),
\end{align}
provided that a suitable second-order strengthening of the directional differentiability assumption~\eqref{eq:directional-differentiability-def} holds true for $\Psi_0$.

\subsubsection{Algorithm by Gaivoronski}
\label{sec:Gaivoronski}

In~\cite{Ga86} (Section 5), the following algorithm is proposed to solve optimal design problems~\eqref{eq:COED(X)} with convex inequality constraints $\Psi_i$ with $\dom \Psi_i = \Xi(X)$. 
Specifically, the setting considered there can be described as the special case of Conditions~\ref{cond:objective-and-constraint-fcts} and~\ref{cond:sensi-fcts} where 
\begin{align} \label{eq:Ga-setting}
	I = \Iineq, \qquad \Ieq = \emptyset, \qquad \Inc = \emptyset, \qquad \Xifin(X) = \Xi(X).
\end{align}
In essence, this algorithm repeatedly solves discretized versions of the linearized descent-direction problems 
\begin{align} \label{eq:Ga-linearized-descent-direction}
	\min_{\eta \in \Xi(X)} \partial_{\eta - \xi^k} \Psi_0(\xi^k) 
	\quad \text{s.t.} \quad
	\Psi_i(\xi^k) + \partial_{\eta-\xi^k} \Psi_i(\xi^k) \le 0 \qquad (i \in \Iineq)
\end{align}
of the original problem and exploits the fact that -- under a strict feasibility condition -- the optimal value $v^k$ of~\eqref{eq:Ga-linearized-descent-direction} is equal to the optimal value of the minimax problem
\begin{align} \label{eq:Ga-minimax}
	\max_{\lambda \in [0,\infty)^m} \min_{x \in X} \ul{\psi}^L(\xi^k,\lambda,x)
\end{align}
with $m := |\Iineq|$ (Theorem~1 of~\cite{Ga86}). 
In this problem, $\ul{\psi}^L(\xi^k,\cdot,\cdot)$ denotes the integral kernel 
of the Lagrange function of~\eqref{eq:Ga-linearized-descent-direction}, that is, 
\begin{align}
	\ul{\psi}^L(\xi^k,\lambda,x) := \psi_0(\xi^k,x) + \sum_{i \in \Iineq} \lambda_i (\Psi_i(\xi^k) + \psi_i(\xi^k,x))
	\qquad ((\lambda,x) \in [0,\infty)^m \times X).
\end{align}
In order to solve~\eqref{eq:Ga-minimax}, an iterative discretization scheme for $X$ is proposed in~\cite{Ga86}, which then leads to the following algorithm (Section~5 of~\cite{Ga86}).

\begin{algo} \label{algo:Ga}
	Input: a design $\xi^0 \in \Xi(X)$, a multiplier $\lambda^0 \in [0,\infty)^m$, a finite subset $X^0$ of $X$, and step sizes $\alpha_k \in (0,\infty)$ with $\alpha_k \longrightarrow 0$ and with $\sum_{k=0}^\infty \alpha_k = \infty$.  
	With these inputs at hand, set $k = 0$ and proceed in the following steps:
	\begin{itemize}
		\item[1.] Compute a solution $x^k \in X$ of the discretization updating problem
		\begin{align}
			\label{eq:Ga-discretization-updating-problem}
			\min_{x \in X} \ul{\psi}^L(\xi^k,\lambda^k,x)
		\end{align}
		and update the discretization by setting $X^{k+1} := X^k \cup \{x^k\}.$
		
		\item[2.] Compute a solution $\eta^k \in \Xi(X^{k+1})$ of the discretized linearized descent-direction problem
		\begin{align} \label{eq:Ga-discretized-linearized-descent-direction}
			\min_{\eta \in \Xi(X^{k+1})} \int_X \psi_0(\xi^k,x) \d\eta(x)
			\quad \text{s.t.} \quad
			\Psi_i(\xi^k) + \int_X \psi_i(\xi^k,x) \d\eta(x) \le 0 \qquad (i \in \Iineq)
		\end{align}
		and a solution $\lambda^{k+1} \in [0,\infty)^m$ of the optimization problem 
		\begin{align} \label{eq:Ga-dual-of-discretized-linearized-descent-direction}
			\max_{\lambda \in [0,\infty)^m} \min_{x \in X^{k+1}} \ul{\psi}^L(\xi^k,\lambda,x),
		\end{align}
		which is easily seen to be the dual optimization problem of~\eqref{eq:Ga-discretized-linearized-descent-direction}.
		
		\item[3.] Update the design according to the updating rule
		\begin{align}
			\xi^{k+1} := (1-\alpha_k) \xi^k + \alpha_k \eta^k
		\end{align}
		and return to Step 1 with $k$ replaced by $k+1$. 
	\end{itemize}
\end{algo}

It is important to notice that the iterates $\xi^k$ of the above algorithm are generally not feasible for~\eqref{eq:COED(X)}, even if the initial design $\xi^0$ is chosen to be feasible. 
In fact, feasibility is guaranteed only in the limit $k\to\infty$ or, more precisely, only for the accumulation points of the iterate sequence $(\xi^k)$ (Theorem~3 in~\cite{Ga86} and the remark following its proof). 
It is therefore important that the algorithm does not terminate, 
which is why it does not feature a termination condition. 
\smallskip

Concerning convergence of the algorithm, the following result is established 
in~\cite{Ga86} (Theorem~5): In the aforementioned special case~\eqref{eq:Ga-setting} of Conditions~\ref{cond:objective-and-constraint-fcts} and~\ref{cond:sensi-fcts}, the criterion values $\Psi_0(\xi^k)$ converge to the optimal value of~\eqref{eq:COED(X)},
\begin{align}
	\Psi_0(\xi^k) \longrightarrow \min_{\xi\in\Xifeas(X)} \Psi_0(\xi) \qquad (k\to\infty),
\end{align}
provided that (i) there exists an $\eta^0 \in \Xi(X)$ such that $\Psi_i(\eta^0) < 0$ for all $i \in \Iineq$ 
and that (ii) the multiplier sequence $(\lambda^k)$ generated by the algorithm is bounded. As a sufficient condition for this boundedness assumption, the affinity of all constraint functions $\Psi_i$ is mentioned in~\cite{Ga86}. In the case of non-affine constraint functions, however, no sufficient conditions for the boundedness of the multiplier sequence are given in~\cite{Ga86}. 

\subsubsection{Algorithm by Molchanov and Zuyev}
\label{sec:Molchanov-Zuyev}

In~\cite{MoZu02} (Section 6), the following algorithm is proposed to solve optimal design problems~\eqref{eq:COED(X)} with a not necessarily convex criterion $\Psi_0$ and affine equality constraints $\Psi_i$ with $\dom\Psi_i = \Xi(X)$. Specifically, the design problem is formulated in the form
\begin{align} \label{eq:COED(X)-MoZu}
	\min_{\xi \in \nonnegmsre} \Psi_0(\xi) 
	\quad \text{s.t.} \quad
	\Psi_i(\xi) = a_i \text{ for all } i \in \Ieq \text{ and } \xi(X) = 1 
\end{align}
with $\nonnegmsre$ being the set of non-negative finite measures on $\B_X$, and the following setting is assumed:
\begin{itemize}
	\item[(i)] $\Psi_0: \Xi(X) \to \R$ is continuous (in the weak topology as usual) and extends to a Fr\'{e}chet  differentiable mapping $\ol{\Psi}_0: \signedmsre \to \R$ on the Banach space $\signedmsre$ of signed measures (endowed with the total-variation norm) whose directional derivatives $\partial_\eta \ol{\Psi}_0(\xi)$ for $\xi, \eta \in \signedmsre$ are given by integrals of the form
	\begin{align}
		\partial_\eta \ol{\Psi}_0(\xi) = \int_X \ol{\psi}_0(\xi,x) \d\eta(x)
	\end{align}
	with continuous functions $\ol{\psi}_0(\xi,\cdot): X \to \R$. (In analogy to~\eqref{eq:directional-derivative-integral-form}, one could call the function $\ol{\psi}_0: \signedmsre \times X \to \R$ the signed sensitivity function of $\Psi_0$.) 
	
	\item[(ii)] $\Psi_i$ for $i \in \Ieq$ is an affine mapping given by
	\begin{align}
		\Psi_i(\xi) := \int_X g_i(x) \d\xi(x)
		\qquad (\xi \in \Xi(X))
	\end{align}
	with continuous functions $g_i: X \to \R$ for $i \in \Ieq$. 
\end{itemize}
In essence, the algorithm from~\cite{MoZu02} repeatedly computes suitable approximate solutions of the steepest descent-direction problems
\begin{align} \label{eq:MoZu-descent-direction}
	\min_{\eta \in \signedmsre} \partial_\eta \Psi_0(\xi^k) 
	\quad \text{s.t.} \quad
	\xi^k + \eta \in \nonnegmsre \text{ and } 
	\Psi_i(\xi^k + \eta) = a_i \qquad (i \in \{1,\dots,m+1\})
\end{align}
in the space of signed measures, where $\Ieq = \{1,\dots,m\}$ and $\Psi_{m+1}(\xi) := \xi(X)$ and $a_{m+1} := 1$. 
In doing so, a characterization of steepest descent directions for~\eqref{eq:COED(X)-MoZu} is exploited  (Theorem~4.1 of~\cite{MoZu02}). Also, the termination criterion of the algorithm is inspired by the following necessary optimality condition for~\eqref{eq:COED(X)-MoZu}: if $\xi^k$ is a solution of~\eqref{eq:COED(X)-MoZu} that satisfies the Robinson-Zowe-Kurcyusz regularity condition, then there exists a multiplier $\lambda^k \in \R^{m+1}$ such that
\begin{align} \label{eq:MoZu-necessary-optimality-condition}
	\min_{x \in X} \ol{\psi}^L(\xi^k,\lambda^k,x) \ge 0 
	\qquad \text{and} \qquad
	\ol{\psi}^L(\xi^k,\lambda^k,x) = 0 \text{ for $\xi^k$-a.e. } x \in X
\end{align} 
(Theorem~2.1 of~\cite{MoZu02}). In the above relations, $\ol{\psi}^L$ denotes the integral kernel of the directional derivatives -- or, in other words, the signed sensitivity function -- of the Lagrange function of~\eqref{eq:COED(X)-MoZu}, 
that is,
\begin{align}
	\ol{\psi}^L(\xi,\lambda,x) := \ol{\psi}_0(\xi,x) + \sum_{i=1}^{m+1} \lambda_i g_i(x)
	\qquad ((\xi,\lambda,x) \in \signedmsre \times \R^{m+1} \times X).
\end{align}

\begin{algo}
	Input: a design $\xi^0 \in \Xifeas(X)$ with finite support and optimality tolerances $\delta, \delta_{\supp} \in (0,\infty)$. 
	With these inputs at hand, set $k = 0$ and proceed in the following steps:
	\begin{itemize}
		\item[1.] Compute an approximate solution $\eta^k$ of the descent-direction problem~\eqref{eq:MoZu-descent-direction}. In order to do so, compute a solution $\tilde{x}^k = (x^{k,1},\dots,x^{k,m+1}) \in X^{m+1}$ of the nonlinear optimization problem
		\begin{equation} \label{eq:MoZu-descent-direction-proxy}
			\begin{aligned}
				\min_{\tilde{x} \in X^{m+1}} \sum_{j=1}^{m+1} w_j(\tilde{x}) \ol{\psi}_0(\xi^k,x^j) 
				\quad \text{s.t.} \quad 
				w_j(\tilde{x}) \ge 0 \qquad (j \in \{1,\dots,m+1\})
			\end{aligned}
		\end{equation}
		and then set 
		$\eta^k := \sum_{j=1}^{m+1} w_j(x^k) \delta_{x^{k,j}}$, 
		where the following abbreviations have been used:
		\begin{align*}
			w_j(\tilde{x}) := e_j^\top G(\tilde{x})^{-1} \begin{pmatrix}
				a_1 \\ \vdots \\ a_{m+1}
			\end{pmatrix}
			\qquad \text{and} \qquad
			G(\tilde{x}) := \begin{pmatrix}
				g_1(x^1) & \dots & g_1(x^{m+1}) \\
				\vdots   & 		 & \vdots \\
				g_{m+1}(x^1) & \dots & g_{m+1}(x^{m+1})
			\end{pmatrix}
		\end{align*}
		for $\tilde{x} = (x^1,\dots,x^{m+1}) \in X^{m+1}$ with $e_j \in \R^{m+1}$ the $j$th canonical unit vector and with $g_{m+1}(y) := 1$ for $y \in X$.

		\item[2.] Compute a solution $\lambda^k \in \R^{m+1}$ of the quadratically convex least-squares problem
		\begin{align}
			\min_{\lambda \in \R^{m+1}} \sum_{x \in \supp \xi^k} (\ol{\psi}^L(\xi^k,\lambda,x))^2
		\end{align}
		inspired by~(\ref{eq:MoZu-necessary-optimality-condition}.b).
		In case 
		\begin{align}
			\xi^k\Big( \Big\{ x \in X: \ol{\psi}^L(\xi^k,\lambda^k,x) - \min_{x \in X} \ol{\psi}^L(\xi^k,\lambda^k,x) > \delta \Big\} \Big) < \delta_{\supp}
		\end{align}
		and $\Psi_0((1-\alpha)\xi^k + \alpha \eta^k) \ge \Psi_0(\xi^k)$ for all $\alpha \in (0,1]$, terminate. In the opposite case, 
		update the design by setting
		\begin{align}
			\xi^{k+1} := (1-\alpha_k)\xi^k + \alpha_k \eta^k,
		\end{align}
		where $\alpha_k \in (0,1]$ is any step size with $\Psi_0((1-\alpha_k)\xi^k + \alpha_k \eta^k) < \Psi_0(\xi^k)$, and then return to Step 1 with $k$ replaced by $k+1$.		
	\end{itemize}
\end{algo}

It is straightforward to verify that the iterates $\xi^k$ of the above algorithm are all feasible for~\eqref{eq:COED(X)-MoZu}. In this context, it should be noticed, however, 
that the inverse of the matrix $G(\tilde{x})$ used in~\eqref{eq:MoZu-descent-direction-proxy} does not exist in general, 
contrary to what is stated before (6.1) in~\cite{MoZu02}. (Indeed, let $X := [0,(m+2)\pi]$ for some $m \in \N$ and let $g_i := \cos(i\cdot)|_X$ for $i \in \{1,\dots,m\}$ and $g_{m+1} := 1$. Since $g_1,\dots,g_{m+1}$ when restricted to $[0,2\pi]$ form an orthogonal system, the functions $g_1, \dots, g_{m+1}$ are linearly independent on $X$, as required before (2.4) of~\cite{MoZu02}. Setting $x^j := \pi/2 + j \pi \in X$ and $\tilde{x} := (x^1,\dots,x^{m+1})$, however, we see that the $i$th row of $G(\tilde{x})$ is identically zero for every odd index $i \in \{1,\dots,m\}$. And therefore, $G(\tilde{x})$ is not invertible.) 
\smallskip

Concerning convergence of the algorithm, not much is said in~\cite{MoZu02}. Indeed, the algorithm is illustrated and validated by means of several interesting examples in~\cite{MoZu02} (Section 7), which go beyond experimental design (but all of them feature convex objective functions as in experimental design). It should be pointed out, though, that no general convergence or termination results are proved or even formulated in~\cite{MoZu02}. In fact, 
at the very end of~\cite{MoZu04}, it is emphasized that an additional analysis is necessary to ensure that the algorithm really does converge to a solution of~\eqref{eq:COED(X)-MoZu}.

\section{Convergence and termination results}

In this section, we establish convergence and finite termination results for our adaptive discretization algorithm. Specifically, we show that the iterate sequence of the special algorithm (Algorithm~\ref{algo:COED-special}) accumulates at an optimal design for~\eqref{eq:COED(X)} and that the iterate sequence of the general algorithm (Algorithm~\ref{algo:COED-general}), in turn, terminates at an $\eps$-optimal design. We begin with two preparatory lemmas.

\begin{lm} \label{lm:properties-of-constr-fcts-depending-on-sensi-fct-continuity}
Suppose that Conditions~\ref{cond:objective-and-constraint-fcts} and~\ref{cond:sensi-fcts} are satisfied and as in~\eqref{eq:Ic-and-Inc-definition} let $\Ic$ and $\Inc$ denote the subset of $I$ for which the sensitivity function $\psi_i$ is continuous or non-continuous, respectively. Then 
\begin{itemize}
\item[(i)] for $i \in \{0\} \cup \Ic$, the objective or constraint function $\Psi_i|_{\dom \Psi_i}$, respectively, is continuous
\item[(ii)] for $i \in \Inc$, the constraint function $\Psi_i|_{\dom \Psi_i}$  is given by
\begin{align} \label{eq:properties-of-constr-fcts-depending-on-sensi-fct-continuity}
\Psi_i(\xi) = \int_X g_i(x) \d\xi(x) \qquad (\xi \in \dom\Psi_i). 
\end{align}
\end{itemize}
\end{lm}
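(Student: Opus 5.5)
Part (ii) is the easy half, so I would dispose of it first. For $i \in \Inc$, Condition~\ref{cond:sensi-fcts}~(ii)(b) gives a bounded measurable $g_i$ with $\psi_i(\xi,x) = g_i(x) - \Psi_i(\xi)$ on $\dom\Psi_i \times X$. Fix $\xi \in \dom\Psi_i$ and insert $\eta := \xi$ into the directional differentiability relation~\eqref{eq:directional-differentiability-def}. The difference quotient is then identically zero, so
\begin{align*}
0 = \int_X \psi_i(\xi,x)\d\xi(x) = \int_X g_i \d\xi - \Psi_i(\xi),
\end{align*}
which is exactly~\eqref{eq:properties-of-constr-fcts-depending-on-sensi-fct-continuity}. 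The same computation shows in general that $\int_X \psi_i(\xi,x)\d\xi(x) = 0$ for every $\xi \in \dom\Psi_i$ and every $i$, and this identity is what I will use in part (i).

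For (i), let $i \in \{0\} \cup \Ic$ and take a sequence $\xi^n \to \xi$ in $\dom\Psi_i$ (weak topology). It suffices to show $\Psi_i(\xi^n) \to \Psi_i(\xi)$, since $\Xi(X)$ is metrizable. Lower semicontinuity is already available from Condition~\ref{cond:objective-and-constraint-fcts}, and for $i \in \Ieq$ continuity is assumed outright, so only the upper bound $\limsup_n \Psi_i(\xi^n) \le \Psi_i(\xi)$ is needed. Apply Lemma~\ref{lm:bound-on-differences-of-Psiis-in-terms-of-sensi-fcts} with base point $\xi^n$ and direction $\eta := \xi$:
\begin{align*}
\Psi_i(\xi) - \Psi_i(\xi^n) \ge \int_X \psi_i(\xi^n,x)\d\xi(x) = \int_X \psi_i(\xi^n,x)\d\xi(x) - \int_X \psi_i(\xi^n,x)\d\xi^n(x),
\end{align*}
where the last equality uses $\int \psi_i(\xi^n,\cdot)\d\xi^n = 0$. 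The goal is then to show that the right-hand side tends to $0$.

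I expect this limit to be the main obstacle. Write the right-hand side as
\begin{align*}
\Big(\int \psi_i(\xi^n,\cdot)\d\xi - \int \psi_i(\xi,\cdot)\d\xi\Big) + \Big(\int \psi_i(\xi,\cdot)\d\xi - \int \psi_i(\xi,\cdot)\d\xi^n\Big) + \Big(\int \psi_i(\xi,\cdot)\d\xi^n - \int \psi_i(\xi^n,\cdot)\d\xi^n\Big).
\end{align*}
The middle term tends to $0$ by weak convergence, because $\psi_i(\xi,\cdot)$ is continuous on the compact space $X$. The outer two terms are both bounded by $\sup_{x}|\psi_i(\xi^n,x) - \psi_i(\xi,x)|$. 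The joint continuity of $\psi_i$ on $\dom\Psi_i \times X$ together with compactness of $X$ gives uniform convergence $\psi_i(\xi^n,\cdot) \to \psi_i(\xi,\cdot)$, via the standard argument: otherwise there are $x_n$ with a gap at least $\varepsilon$, and passing to a convergent subsequence $x_n \to x$ contradicts continuity at $(\xi,x)$.

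Combining the three estimates gives $\liminf_n(\Psi_i(\xi) - \Psi_i(\xi^n)) \ge 0$, which together with lower semicontinuity yields continuity of $\Psi_i|_{\dom\Psi_i}$.
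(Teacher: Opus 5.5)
Your proposal is correct and essentially the paper's argument. Part (ii) is identical: test the directional derivative with $\eta=\xi$. For part (i), the paper likewise applies Lemma~\ref{lm:bound-on-differences-of-Psiis-in-terms-of-sensi-fcts} at base point $\xi^n$ against $\xi$ and uses $\int_X\psi_i(\xi,x)\d\xi(x)=0$, but it gets the other direction from the same lemma at base point $\xi$ against $\xi^n$ (so no lower semicontinuity is needed), whereas you take it from Condition~\ref{cond:objective-and-constraint-fcts}; your uniform-convergence step also supplies the uniform bound on $\psi_i(\xi^n,\cdot)$ that the paper's appeal to dominated convergence leaves implicit.
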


\begin{proof}
(i) Suppose that $i \in \{0\} \cup \Ic$ and let $\xi^n, \xi \in \dom \Psi_i$ with $\xi^n \longrightarrow \xi$ as $n \to \infty$. 
It then follows 
by Lemma~\ref{lm:bound-on-differences-of-Psiis-in-terms-of-sensi-fcts} that 
\begin{align*}  
\Psi_i(\xi^n) - \Psi_i(\xi) \le -\int_X \psi_i(\xi^n,x) \d\xi(x) 
\qquad \text{and} \qquad
\Psi_i(\xi) - \Psi_i(\xi^n) \le -\int_X \psi_i(\xi,x) \d\xi^n(x) 
\end{align*}
for all $n \in \N$. Consequently,
\begin{align} \label{eq:properties-of-constr-fcts-depending-on-sensi-fct-continuity-step2}
|\Psi_i(\xi^n) - \Psi_i(\xi)| \le  \max\bigg\{ -\int_X \psi_i(\xi^n,x) \d\xi(x), -\int_X \psi_i(\xi,x) \d\xi^n(x) \bigg\} 
\end{align}
for all $n \in \N$. Since $\psi_i(\cdot,x)$ is continuous for every $x \in X$ and since $\psi_i(\xi,\cdot)$ is a bounded continuous function by Condition~\ref{cond:sensi-fcts}~(i) or, respectively, by the definition of $\Ic$, we conclude with the dominated convergence theorem that the right-hand side of~\eqref{eq:properties-of-constr-fcts-depending-on-sensi-fct-continuity-step2} converges to
\begin{align} \label{eq:properties-of-constr-fcts-depending-on-sensi-fct-continuity-step3}
\int_X \psi_i(\xi,x) \d\xi(x) = \partial_{\xi-\xi}\Psi_i(\xi) = 0
\end{align}
as $n \to \infty$. So, the same is true for the left-hand side of~\eqref{eq:properties-of-constr-fcts-depending-on-sensi-fct-continuity-step2}, whence $\Psi_i(\xi^n) \longrightarrow \Psi_i(\xi)$ as $n \to \infty$, as desired.
\smallskip

(ii) Suppose that $i \in \Inc$. It then follows by Condition~\ref{cond:sensi-fcts} that there exists a bounded measurable function such that 
\begin{align}
\psi_i(\xi,x) = g_i(x) - \Psi_i(\xi) \qquad ((\xi,x) \in \dom\Psi_i \times X). 
\end{align}
In particular, it follows that
\begin{align}
0 = \partial_{\xi-\xi}\Psi_i(\xi) = \int_X \psi_i(\xi,x)\d\xi(x) = \int_X g_i(x)\d\xi(x) - \Psi_i(\xi) \qquad (\xi \in \dom\Psi_i),
\end{align}
which is the desired conclusion~\eqref{eq:properties-of-constr-fcts-depending-on-sensi-fct-continuity}. 
\end{proof}

\begin{lm} \label{lm:xik-optimal-and-psiL-ge-0-on-Xk}
Suppose that Conditions~\ref{cond:objective-and-constraint-fcts} and \ref{cond:sensi-fcts} are satisfied. Suppose further that $(\xi^k)$, $(\lambda^k)$, $(x^k)$ are generated by Algorithm~\ref{algo:COED-general} with some tolerances $\ul{\delta}_k \in (0,\infty)$ and $\eps \in [0,\infty)$ and an initial finite subset $X^0 \ne \emptyset$ such that Condition~\ref{cond:reg-conditions-on-X^0} is satisfied. We then have the following assertions.
\begin{itemize}
\item[(i)] $\xi^k$ is an optimal solution of the discretized design problem~\eqref{eq:COED(X^k)-short} and  
\begin{align} \label{eq:xi^k-in-Xifin(Xk)-cap-Xifeas(Xk)-and-complementarity-relation-for-xi^k}
	\xi^k \in \Xifin(X^k) \cap \Xifeas(X^k)
	\qquad \text{and} \qquad
	\lambda_i^k \Psi_i(\xi^k) = 0 \qquad (i \in \Iineq)
\end{align}
for every iteration index $k \in K$.

\item[(ii)] $\Psi_0(\xi^{k+1}) \le \Psi_0(\xi^k) < \infty$ for all iteration indices $k, k+1 \in K$. In particular, $(\Psi_0(\xi^k))_{k\in K}$ is monotonically decreasing.

\item[(iii)] $\psi^L(\xi^k,\lambda^k,x) \ge 0$ for every $x \in X^k$ and every iteration index $k \in K$. In particular, 
\begin{align} \label{eq:psiL(xi^l,lambda^l,x^k)-ge-0}
	\psi^L(\xi^{l},\lambda^l,x^k) \ge 0
\end{align}
for all iteration indices $k,l \in K$ with $k < l$. 
\end{itemize}
\end{lm}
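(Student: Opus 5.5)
Part~(i) follows directly from the well-definedness result and the saddle-point lemma. By Lemma~\ref{lm:sufficient-cond-for-saddle-points-existence-and-reg-conditions}, the point $(\xi^k,\lambda^k)$ computed in Step~1 exists and is a saddle point of $\CD(X^k)$, so $\xi^k \in \Xifin(X^k)$. We apply Lemma~\ref{lm:saddle-points-yield-solutions} with $X$ replaced by the closed set $X^k$; Condition~\ref{cond:objective-and-constraint-fcts} is inherited by closed subsets, as noted in the proof of Lemma~\ref{lm:sufficient-cond-for-saddle-points-existence-and-reg-conditions}. This yields that $\xi^k$ solves $\CD(X^k)$, which gives $\xi^k \in \Xifeas(X^k)$, and that the complementarity relation holds.

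For part~(ii), the key facts are that $X^k \subset X^{k+1}$ and that, under the canonical embedding, $\Xifeas(X^k) \subset \Xifeas(X^{k+1})$. Hence $\xi^k$ is feasible for $\CD(X^{k+1})$, and the optimality of $\xi^{k+1}$ from~(i) gives $\Psi_0(\xi^{k+1}) \le \Psi_0(\xi^k)$. Finiteness holds because $\xi^k \in \Xifin(X^k) \subset \dom\Psi_0$.

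Part~(iii) is the main step, and I would obtain it from the saddle-point inequality together with directional openness. Fix $x \in X^k$, and set $\eta := \delta_x \in \Xi(X^k)$ and $\xi_\alpha := \xi^k + \alpha(\delta_x - \xi^k) \in \Xi(X^k)$. By the directional openness in Lemma~\ref{lm:bound-on-differences-of-Psiis-in-terms-of-sensi-fcts}, applied on $X^k$ (or on $X$, since supports stay in $X^k$), we have $\xi_\alpha \in \Xifin(X^k)$ for small $\alpha > 0$. The saddle-point inequality $L(\xi^k,\lambda^k) \le L(\xi_\alpha,\lambda^k)$ then gives
\[
0 \le \frac{L(\xi_\alpha,\lambda^k) - L(\xi^k,\lambda^k)}{\alpha}.
\]
As $\alpha \searrow 0$, the right-hand side tends to $\int \psi^L(\xi^k,\lambda^k,\cdot)\d\delta_x = \psi^L(\xi^k,\lambda^k,x)$. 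This uses the directional differentiability of each $\Psi_i$ (Condition~\ref{cond:sensi-fcts} and Lemma~\ref{lm:equality-constr-fcts-have-continuous-sensi-fct}) and the finiteness of the $\lambda_i^k$.

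The subtle point is that the sensitivity functions must be evaluated as functions on $X$, not on $X^k$. Because $\Psi_i$ on $\Xi(X^k)$ is the restriction of $\Psi_i$ on $\Xi(X)$ via the embedding, the directional derivative in direction $\delta_x - \xi^k$ is the same in both settings. It therefore equals $\psi^L(\xi^k,\lambda^k,x)$ with the $X$-sensitivity function, and no separate compatibility argument is needed.

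The ``in particular'' statement follows because, for $k<l$, the point $x^k$ was added to form $X^{k+1}$, so $x^k \in X^{k+1} \subset X^l$. Applying the first statement of~(iii) at iteration $l$ with $x = x^k$ gives $\psi^L(\xi^l,\lambda^l,x^k) \ge 0$.
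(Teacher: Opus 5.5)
Your proof is correct and follows essentially the same route as the paper's: you derive (i) from Lemma~\ref{lm:saddle-points-yield-solutions} applied on $X^k$, (ii) from the nestedness $\Xifeas(X^k) \subset \Xifeas(X^{k+1})$, and (iii) from the saddle-point inequality in the direction $\delta_x - \xi^k$, which is admissible by directional openness, followed by $x^k \in X^{k+1} \subset X^l$. Your extra remarks, invoking the well-definedness lemma for existence and noting that the sensitivity functions are evaluated on $X$ via the canonical embedding, only make explicit what the paper leaves to its identification convention.
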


\begin{proof}
(i) Since $(\xi^k,\lambda^k)$ is a saddle point of~\eqref{eq:COED(X^k)-short} for every $k \in K$ by the algorithm's definition, it follows that $\xi^k \in \Xifin(X^k)$ for every $k \in K$. It further follows by Lemma~\ref{lm:saddle-points-yield-solutions} (with $X$ replaced by $X^k$) that $\xi^k$ is an optimal solution of~\eqref{eq:COED(X^k)-short} and $\lambda_i^k \Psi_i(\xi^k) = 0$ for every $i \in \Iineq$. In particular, \eqref{eq:xi^k-in-Xifin(Xk)-cap-Xifeas(Xk)-and-complementarity-relation-for-xi^k} is satisfied. 
\smallskip

(ii) Since $X^k \subset X^{k+1}$ for every $k \in K$ with $k+1 \in K$ by the algorithm's definition, it  follows that
\begin{align}
\Xifeas(X^{k+1}) \supset \Xifeas(X^k) \qquad (k,k+1 \in K)
\end{align}
and therefore 
\begin{align} \label{eq:Psi0(xik)-decreasing}
\Psi_0(\xi^{k+1}) = \min_{\xi \in \Xifeas(X^{k+1})} \Psi_0(\xi) \le \min_{\xi \in \Xifeas(X^{k})} \Psi_0(\xi) = \Psi_0(\xi^k)
\end{align}
for all $k, k+1 \in K$. In particular, \eqref{eq:xi^k-in-Xifin(Xk)-cap-Xifeas(Xk)-and-complementarity-relation-for-xi^k} and~\eqref{eq:Psi0(xik)-decreasing} show that $(\Psi_0(\xi^k))_{k\in K}$ is a monotonically decreasing sequence of (finite) real numbers, as desired.
\smallskip

(iii) Since $(\xi^k,\lambda^k)$ is a saddle point of~\eqref{eq:COED(X^k)-short} for every $k \in K$ by the algorithm's definition, we have
\begin{align} \label{eq:saddle-point-cond-for-all-eta}
L(\xi^k,\lambda) \le L(\xi^k,\lambda^k) \le L(\eta,\lambda^k) 
\end{align}
for every $\eta \in \Xifin(X^k)$ and every $\lambda \in \Lambda$. Since, moreover, $\xi^k \in \Xifin(X^k) \subset \Xifin(X)$ by the definition of saddle points and since $\Xifin(X)$ is directionally open by Lemma~\ref{lm:bound-on-differences-of-Psiis-in-terms-of-sensi-fcts}, we see for every $x \in X^k$ that $\xi^k + \alpha (\delta_x-\xi^k) \in \Xifin(X^k)$ for $\alpha$ close enough to $0$. And therefore we conclude from~(\ref{eq:saddle-point-cond-for-all-eta}.b) that
\begin{align} \label{eq:psiL-ge-0-on-Xk}
\psi^L(\xi^k,\lambda^k,x) = \int_X \psi^L(\xi^k,\lambda^k,y) \d \delta_x(y) = \lim_{\alpha\searrow 0} \frac{L(\xi^k+\alpha(\delta_x-\xi^k),\lambda^k) - L(\xi^k,\lambda^k)}{\alpha} \ge 0
\end{align}
for every $x \in X^k$ and every $k \in K$. Since $x^k  \in X^{k+1} \subset X^l$ for all $k,l \in K$ with $k < l$, the relation~\eqref{eq:psiL-ge-0-on-Xk} implies in particular that $\psi^L(\xi^{l},\lambda^l,x^k) \ge 0$ for all iteration indices $k,l \in K$ with $k < l$, as desired.
\end{proof}

With these preparatory lemmas at hand, we can now establish our core convergence lemma about the sequences $(\xi^k)$, $(\lambda^k)$ and $(x^k)$ generated by our algorithm.    

\begin{lm} \label{lm:subsequence-lm}
Suppose that Conditions~\ref{cond:objective-and-constraint-fcts} and \ref{cond:sensi-fcts} are satisfied. Suppose further that $(\xi^k)$, $(\lambda^k)$, $(x^k)$ are generated by Algorithm~\ref{algo:COED-general} with some tolerances $\ul{\delta}_k \in (0,\infty)$ and $\eps \in [0,\infty)$ and an initial finite subset $X^0 \ne \emptyset$ such that Condition~\ref{cond:reg-conditions-on-X^0} is satisfied.
If $(\xi^k)$ is not terminating and $\xi^*$ is any accumulation point of $(\xi^k)$, then 
\begin{align} \label{eq:subsequence-lm-1}
	\xi^* \in \Xifin(X) \cap \Xifeas(X)
\end{align}
and there exist a subsequence $(k_l)$ and points $\lambda^* \in \Lambda$, $x^* \in X$, and $\psi_i^* \in \R$ for $i \in I \cup \{0\}$ such that
\begin{gather} 
	\xi^{k_l} \longrightarrow \xi^*, \qquad \lambda^{k_l} \longrightarrow \lambda^*, \qquad x^{k_l} \longrightarrow x^* \label{eq:subsequence-lm-2}\\
	\psi_i(\xi^{k_l},x^{k_l}) \longrightarrow \psi_i^* \qquad \text{and} \qquad \psi_i(\xi^{k_{l+1}},x^{k_l}) \longrightarrow \psi_i^* \label{eq:subsequence-lm-3}
\end{gather}
for all $i \in I \cup \{0\}$ as $l\to\infty$. In particular,  
\begin{align} \label{eq:subsequence-lm-4}
	\lim_{l\to\infty} \psi^L(\xi^{k_l},\lambda^{k_l},x^{k_l})
	= \lim_{l\to\infty} \psi^L(\xi^{k_{l+1}},\lambda^{k_{l+1}},x^{k_l}).
\end{align}
\end{lm}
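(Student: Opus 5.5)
We first establish \eqref{eq:subsequence-lm-1}. By Lemma~\ref{lm:xik-optimal-and-psiL-ge-0-on-Xk}~(i), $\xi^k \in \Xifeas(X^k) \subset \Xifeas(X)$ for all $k$. The set $\Xifeas(X)$ is closed by Condition~\ref{cond:objective-and-constraint-fcts}, since the $\Psi_i$ are lower semicontinuous for $i \in \Iineq$ and continuous for $i \in \Ieq$. Hence every accumulation point $\xi^*$ lies in $\Xifeas(X)$. Lower semicontinuity of $\Psi_0$ and the monotonicity from Lemma~\ref{lm:xik-optimal-and-psiL-ge-0-on-Xk}~(ii) give $\Psi_0(\xi^*) \le \liminf \Psi_0(\xi^k) \le \Psi_0(\xi^0) < \infty$. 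For $i \in \Iineq$ we have $\Psi_i(\xi^*) \le 0$, so $\Psi_i(\xi^*)$ is finite, and the $\Psi_i$ with $i \in \Ieq$ are real-valued. Thus $\xi^* \in \Xifin(X)$. We then fix a subsequence with $\xi^{k_l} \to \xi^*$. By compactness of $X$, we pass to a further subsequence with $x^{k_l} \to x^*$.

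The main step, and the expected obstacle, is the boundedness of $(\lambda^{k_l})$. For the inequality multipliers we use the strictly feasible point $\eta^0 \in \Xifin(X^0) \subset \Xifin(X^k)$ from Condition~\ref{cond:reg-conditions-on-X^0}~(i). Complementarity and feasibility give $L(\xi^k,\lambda^k) = \Psi_0(\xi^k)$. The saddle-point inequality then yields
\[
\Psi_0(\xi^k) \le L(\eta^0,\lambda^k) = \Psi_0(\eta^0) + \sum_{i\in\Iineq} \lambda^k_i \Psi_i(\eta^0),
\]
because $\Psi_i(\eta^0)=0$ for $i \in \Ieq$. Since $\Psi_0(\xi^k) \ge \min_{\Xifeas(X)} \Psi_0 > -\infty$ by Theorem~\ref{thm:solvability}, we obtain $\sum_{i \in \Iineq} \lambda_i^k (-\Psi_i(\eta^0)) \le \Psi_0(\eta^0) - \min_{\Xifeas(X)} \Psi_0$. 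As $-\Psi_i(\eta^0) > 0$ and $\lambda_i^k \ge 0$, the inequality multipliers are bounded.

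For the equality multipliers we use Lemma~\ref{lm:xik-optimal-and-psiL-ge-0-on-Xk}~(iii): $\psi^L(\xi^k,\lambda^k,x) \ge 0$ for all $x \in X^0 \subset X^k$. Integrating against an arbitrary $\eta \in \Xi(X^0)$ and using Lemma~\ref{lm:equality-constr-fcts-have-continuous-sensi-fct} together with $\Psi_i(\xi^k) = 0$ for $i \in \Ieq$ gives
\[
\sum_{i\in\Ieq} \lambda_i^k \Psi_i(\eta) \ge -\int \psi_0(\xi^k,\cdot)\d\eta - \sum_{i\in\Iineq}\lambda_i^k \int \psi_i(\xi^k,\cdot)\d\eta .
\]
The right-hand side is bounded uniformly along the subsequence, for the following reasons. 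The set $X^0$ is finite. For $i \in \{0\}\cup\Ic$ the function $\psi_i$ is continuous at points $(\xi^*,x)$, with $\xi^* \in \dom\Psi_i$. For $i \in \Inc$ the function $\psi_i = g_i - \Psi_i(\xi^k)$ is bounded, because $g_i$ is bounded and $\Psi_i(\xi^k) = \int g_i \d\xi^k$ by Lemma~\ref{lm:properties-of-constr-fcts-depending-on-sensi-fct-continuity}. Finally, the inequality multipliers are bounded by the previous paragraph.

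Now suppose $|\lambda^{k_l}_{\mathrm{eq}}| \to \infty$ along some subsequence. Dividing by this norm and passing to a convergent subsequence of the normalized vectors, with limit $\nu \neq 0$, gives $\nu^\top \Psieq(\eta) \ge 0$ for all $\eta \in \Xi(X^0)$. This contradicts Condition~\ref{cond:reg-conditions-on-X^0}~(ii). Hence $(\lambda^{k_l})$ is bounded, and we may assume $\lambda^{k_l} \to \lambda^*$, with $\lambda^* \in \Lambda$ because $\Lambda$ is closed.

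It remains to prove \eqref{eq:subsequence-lm-3}. For $i \in \{0\} \cup \Ic$, joint continuity of $\psi_i$ at $(\xi^*,x^*) \in \dom\Psi_i \times X$, together with $\xi^{k_l},\xi^{k_{l+1}} \to \xi^*$ and $x^{k_l} \to x^*$, gives the common limit $\psi_i^* := \psi_i(\xi^*,x^*)$. For $i \in \Inc$ we write $\psi_i(\xi,x) = g_i(x) - \Psi_i(\xi)$. Both $g_i(x^{k_l})$ and $\Psi_i(\xi^{k_l}) = \int g_i\d\xi^{k_l}$ are bounded, so we thin the subsequence once more, for each of the finitely many $i \in \Inc$, so that both converge.

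After this thinning, $(\Psi_i(\xi^{k_{l+1}}))_l$ is merely a shift of the convergent sequence $(\Psi_i(\xi^{k_l}))_l$ and therefore has the same limit. Hence $\psi_i(\xi^{k_l},x^{k_l})$ and $\psi_i(\xi^{k_{l+1}},x^{k_l})$ converge to the same $\psi_i^*$. All earlier convergences survive the thinning. Finally, \eqref{eq:subsequence-lm-4} follows from \eqref{eq:psiL-def} by combining $\lambda^{k_l},\lambda^{k_{l+1}} \to \lambda^*$ with \eqref{eq:subsequence-lm-3}.
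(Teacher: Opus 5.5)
Your proof is correct and follows the paper's proof closely. It establishes feasibility and finiteness of $\xi^*$ via closedness of $\Xifeas(X)$ and lower semicontinuity, bounds the multipliers using the strictly feasible point $\eta^0$ together with $\psi^L(\xi^k,\lambda^k,\cdot)\ge 0$ on $X^0$ and the non-one-sidedness of $\Psieq(\Xi(X^0))$, and finishes with a thinning step for the indices in $\Inc$. The only minor deviation is that you bound the inequality multipliers directly, and uniformly in $k$, from $\Psi_0(\xi^k)=L(\xi^k,\lambda^k)\le L(\eta^0,\lambda^k)$, whereas the paper normalizes the whole multiplier vector, gets $\tilde{\lambda}_i^*=0$ for $i\in\Iineq$ from essentially the same inequality divided by $|\lambda^{k}|$, and runs a single contradiction argument for both multiplier groups.
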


\begin{proof}
We proceed in three steps and in the entire proof, we continually use the decomposition of the constraint index set $I$ into $\Ic$ and $\Inc$ as in~\eqref{eq:Ic-and-Inc-definition}. 
\smallskip

As a first step, we show that $\xi^* \in \Xifin(X) \cap \Xifeas(X)$ and that there exist a subsequence $(k_l)$ and and points $x^* \in X$, and $\psi_i^* \in \R$ for $i \in I$ such that~(\ref{eq:subsequence-lm-2}.a), (\ref{eq:subsequence-lm-2}.c) and \eqref{eq:subsequence-lm-3} are satisfied.
Indeed, as $\xi^* \in \Xi(X)$ is an accumulation point of $(\xi^k)$ and $X$ is compact (Condition~\ref{cond:objective-and-constraint-fcts}), there exist a subsequence $(k_l)$ and and a point $x^* \in X$ such that
\begin{align} \label{eq:subsequence-lm-step1.1}
	\xi^{k_l} \longrightarrow \xi^* \qquad \text{and} \qquad x^{k_l} \longrightarrow x^* 
\end{align}
as $l\to\infty$. Since $\Xifeas(X)$ is compact (Theorem~\ref{thm:solvability}) and $\xi^k \in \Xifin(X) \cap \Xifeas(X)$ for all $k \in \N_0$ (Lemma~\ref{lm:xik-optimal-and-psiL-ge-0-on-Xk}~(i)), it follows that $\xi^* \in \Xifeas(X)$. Since, moreover, $\Psi_0$ is lower semicontinuous and $(\Psi_0(\xi^k))$ is monotonically decreasing (Lemma \ref{lm:xik-optimal-and-psiL-ge-0-on-Xk}~(ii)), it follows from~\eqref{eq:xi^k-in-Xifin(Xk)-cap-Xifeas(Xk)-and-complementarity-relation-for-xi^k} and~(\ref{eq:subsequence-lm-step1.1}.a) that
\begin{align}
	\Psi_0(\xi^*) \le \liminf_{l\to\infty} \Psi_0(\xi^{k_l}) \le \Psi_0(\xi^0) < \infty.
\end{align}
And therefore
\begin{align} \label{eq:subsequence-lm-step1.2}
	\xi^* \in \dom\Psi_0 \cap \Xifeas(X) = \Xifin(X) \cap \Xifeas(X). 
\end{align}
It remains to establish the convergences~\eqref{eq:subsequence-lm-3} for a suitable subsequence of $(k_l)$ and every $i \in I \cup \{0\}$ 
and to do so we distinguish the case $i \in \Ic \cup \{0\}$ and the case $i \in \Inc$. 
In the case where $i \in \Ic \cup \{0\}$, it immediately follows from~\eqref{eq:subsequence-lm-step1.1} and \eqref{eq:subsequence-lm-step1.2} that the convergences~\eqref{eq:subsequence-lm-3} hold true for $i \in \Ic$ with $\psi_i^* := \psi_i(\xi^*,x^*) \in \R$. 
In the case where $i \in \Inc$, it follows from~\eqref{eq:subsequence-lm-step1.1} and \eqref{eq:subsequence-lm-step1.2} in conjunction with Condition~\ref{cond:sensi-fcts}~(ii) and Lemma \ref{lm:properties-of-constr-fcts-depending-on-sensi-fct-continuity}~(ii) that the sequences $(g_i(x^{k_l}))$ and $(\Psi_i(\xi^{k_l})) = (\int_X g_i(x) \d\xi^{k_l}(x))$ are bounded 
and therefore there exists a subsequence of $(k_l)$ -- again denoted by $(k_l)$ for simplicity -- such that
\begin{align} \label{eq:subsequence-lm-step1.3}
	g_i(x^{k_l}) \longrightarrow g_i^*
	\qquad \text{and} \qquad
	\Psi_i(\xi^{k_l}) \longrightarrow \Psi_i^* 
\end{align}
for some $g_i^*, \Psi_i^* \in \R$. Consequently, the convergences~\eqref{eq:subsequence-lm-3} hold true also for $i \in \Inc$ with $\psi_i^* := g_i^*-\Psi_i^* \in \R$.
\smallskip

As a second step, we show that for every subsequence $(k_l)$ as in the first step, there exists another subsequence $(k_l')$ and a $\lambda^* \in \Lambda$ such that~(\ref{eq:subsequence-lm-2}.b) holds true along this subsequence, that is,
\begin{align} \label{eq:subsequence-lm-step2}
	\lambda^{k_l'} \longrightarrow \lambda^* \qquad (l\to\infty). 
\end{align}
So, let $(k_l)$ and $x^* \in X$, and $\psi_i^* \in \R$ be such that the convergences (\ref{eq:subsequence-lm-2}.a), (\ref{eq:subsequence-lm-2}.c), \eqref{eq:subsequence-lm-3} hold true. In order to establish~\eqref{eq:subsequence-lm-step2}, it is sufficient to show that $(\lambda^{k_l})$ is bounded (because $\Lambda = [0,\infty)^{\Iineq} \times \R^{\Ieq}$ is closed). We show this boundedness by contradiction. So, assume that $(\lambda^{k_l})$ is unbounded. 
It then follows by the closedness of $\Lambda$ in $\R^{I}$ that 
there exists a subsequence $(k_l')$ of $(k_l)$ and a point $\tilde{\lambda}^* \in \Lambda$ such that 
\begin{align} \label{eq:subsequence-lm-step2-unboundedness-assumption}
	|\lambda^{k_l'}| \longrightarrow \infty 
	\qquad \text{and} \qquad
	\frac{\lambda^{k_l'}}{|\lambda^{k_l'}|} \longrightarrow \tilde{\lambda}^*
	\qquad (l\to\infty). 
\end{align} 
We deduce the desired contradiction in three substeps. 
As a first substep, we show that
\begin{align} \label{eq:subsequence-lm-step2.1}
	\tilde{\lambda}_i^* = 0 \qquad (i \in \Iineq).
\end{align}
In order to prove this, we first observe that $\min_{x \in X^k} \psi^L(\xi^k,\lambda^k,x) \ge 0$ for all $k \in \N_0$ by Lemma~\ref{lm:xik-optimal-and-psiL-ge-0-on-Xk}~(iii). In particular, this implies that
\begin{align} \label{eq:subsequence-lm-step2.1.1}
	\min_{x \in X^0} \psi^L(\xi^k,\lambda^k,x) \ge 0  \qquad (k \in \N_0)
\end{align}
because $X^0 \subset X^k$ for every $k \in \N_0$ by the algorithm's definition. It follows from~\eqref{eq:subsequence-lm-step2.1.1} with the help of Lemma~\ref{lm:bound-on-differences-of-Psiis-in-terms-of-sensi-fcts} and Lemma~\ref{lm:xik-optimal-and-psiL-ge-0-on-Xk}~(i) that
\begin{align} \label{eq:subsequence-lm-step2.1.2}
	0 &\le \int_{X^0} \frac{\psi^L(\xi^{k_l'}, \lambda^{k_l'}, x)}{|\lambda^{k_l'}|} \d \eta(x) 
	= \int_{X} \frac{\psi^L(\xi^{k_l'}, \lambda^{k_l'}, x)}{|\lambda^{k_l'}|} \d \eta(x) 
	\le \frac{L(\eta,\lambda^{k_l'}) - L(\xi^{k_l'}, \lambda^{k_l'})}{|\lambda^{k_l'}|} \notag\\
	&= \frac{\Psi_0(\eta)}{|\lambda^{k_l'}|} + \sum_{i \in I} \frac{\lambda_i^{k_l'}}{|\lambda^{k_l'}|} \Psi_i(\eta) - \frac{\Psi_0(\xi^{k_l'})}{|\lambda^{k_l'}|}
	\qquad (\eta \in \Xifin(X^0)).
\end{align}
Since $\Psi_0(\xi^{k_l'}) \ge \liminf_{n\to\infty} \Psi_0(\xi^{k_n'}) - 1 \ge \Psi_0(\xi^*) - 1$ for $l$ large enough by the lower semicontinuity of $\Psi_0$, it further follows from~\eqref{eq:subsequence-lm-step2.1.2} with the help of~\eqref{eq:subsequence-lm-step2-unboundedness-assumption} that
\begin{align} \label{eq:subsequence-lm-step2.1.3}
	0 &\le \limsup_{l\to\infty} \frac{\Psi_0(\eta)}{|\lambda^{k_l'}|} + \sum_{i \in I}  \limsup_{l\to\infty} \frac{\lambda_i^{k_l'}}{|\lambda^{k_l'}|} \Psi_i(\eta) - \liminf_{l\to\infty} \frac{\Psi_0(\xi^{k_l'})}{|\lambda^{k_l'}|} \notag\\
	&\le \sum_{i \in I} \tilde{\lambda}_i^* \Psi_i(\eta)
	\qquad (\eta \in \Xifin(X^0)).
\end{align}
Choose now $\eta := \eta^0$ as in Condition~\ref{cond:reg-conditions-on-X^0}~(i). In view of~(\ref{eq:strictly-feasible-design-on-X^0}.a) and~(\ref{eq:strictly-feasible-design-on-X^0}.c), it then follows from~\eqref{eq:subsequence-lm-step2.1.3} that
\begin{align} \label{eq:subsequence-lm-step2.1.4}
	0 \le \sum_{i \in I} \tilde{\lambda}_i^* \Psi_i(\eta^0) = \sum_{i \in \Iineq} \tilde{\lambda}_i^* \Psi_i(\eta^0).
\end{align}
Since $\tilde{\lambda}_i^* \ge 0$ and $\Psi_i(\eta^0) < 0$ for $i \in \Iineq$, we conclude from~\eqref{eq:subsequence-lm-step2.1.4} that the assertion~\eqref{eq:subsequence-lm-step2.1} of the first substep 
must hold true, as desired.
As a second substep, we show that 
\begin{align} \label{eq:subsequence-lm-step2.2}
	\sum_{i \in \Ieq} \tilde{\lambda}_i^* \Psi_i(\eta) \ge 0 
	\qquad (\eta \in \Xi(X^0)).
\end{align}
In order to prove this, we first observe from~\eqref{eq:subsequence-lm-step2.1.1} with the help of~\eqref{eq:psi_i-for-i-in-Inc} that
\begin{align} \label{eq:subsequence-lm-step2.2.1}
	0 &\le \limsup_{l\to\infty} \int_{X^0} \frac{\psi^L(\xi^{k_l'}, \lambda^{k_l'}, x)}{|\lambda^{k_l'}|} \d \eta(x)
	\le \limsup_{t\to\infty} \sum_{x \in X^0} \frac{\psi_0(\xi^{k_l'}, x)}{|\lambda^{k_l'}|} \eta(\{x\}) \notag\\
	&\quad + \sum_{i \in \Ic} \limsup_{t\to\infty} \sum_{x \in X^0} \frac{\lambda_i^{k_l'}}{|\lambda^{k_l'}|} \psi_i(\xi^{k_l'}, x) \eta(\{x\}) \notag\\
	&\quad + \sum_{i \in \Inc} \limsup_{t\to\infty} \sum_{x \in X^0} \frac{\lambda_i^{k_l'}}{|\lambda^{k_l'}|} \big( g_i(x) - \Psi_i(\xi^{k_l'}) \big) \eta(\{x\})
	\qquad (\eta \in \Xi(X^0)).
\end{align} 
Since $\Psi_i(\xi^{k_l'}) \ge \liminf_{n\to\infty} \Psi_i(\xi^{k_n'}) - 1 \ge \Psi_i(\xi^*) - 1$ for $l$ large enough by the lower semicontinuity of $\Psi_i$ and since $\tilde{\lambda}_i^k \ge 0$ for all $i \in \Inc$ by~\eqref{eq:Ieq-subset-of-Ic-and-Inc-subset-of-Iineq}, it further follows from~\eqref{eq:subsequence-lm-step2.2.1} with the help of~\eqref{eq:Ieq-subset-of-Ic-and-Inc-subset-of-Iineq} and~\eqref{eq:subsequence-lm-step2-unboundedness-assumption} and~\eqref{eq:subsequence-lm-step2.1} that
\begin{align} \label{eq:subsequence-lm-step2.2.2}
	0 &\le \sum_{i \in \Ic} \tilde{\lambda}_i^* \int_X \psi_i(\xi^*,x) \d\eta(x) + \sum_{i \in \Inc} \tilde{\lambda}_i^* \int_X g_i(x) - \Psi_i(\xi^*) + 1 \d\eta(x) \notag\\
	&= \sum_{i \in \Ic} \tilde{\lambda}_i^* \int_X \psi_i(\xi^*,x) \d\eta(x)
	= \sum_{i \in \Ieq} \tilde{\lambda}_i^* \int_X \psi_i(\xi^*,x) \d\eta(x)
	\qquad (\eta \in \Xi(X^0)).
\end{align}
Since $\xi^* \in \Xifin(X) \cap \Xifeas(X)$ by the first step, if finally follows by the affinity of $\Psi_i$ for $i \in \Ieq$ that
\begin{align} \label{eq:subsequence-lm-step2.2.3}
	\int_X \psi_i(\xi^*,x) \d\eta(x) 
	= \Psi_i(\eta) - \Psi_i(\xi^*)
	= \Psi_i(\eta)
	\qquad (i \in \Ieq \text{ and } \eta \in \Xi(X^0))
\end{align}
Combining now~\eqref{eq:subsequence-lm-step2.2.2} and~\eqref{eq:subsequence-lm-step2.2.3}, we immediately obtain the assertion~\eqref{eq:subsequence-lm-step2.2} of the second substep, as desired.
As a third and last substep, we finally deduce the desired contradiction. In order to do so, we first notice that $\tilde{\lambda}^* \ne 0$ by virtue of~\eqref{eq:subsequence-lm-step2-unboundedness-assumption}. In view of the first substep~\eqref{eq:subsequence-lm-step2.1}, we conclude that
\begin{align} \label{eq:subsequence-lm-step2.2.4}
	(\tilde{\lambda}_i^*)_{i \in \Ieq} \ne 0. 
\end{align} 
So, by the second substep~\eqref{eq:subsequence-lm-step2.2} and~\eqref{eq:subsequence-lm-step2.2.4}, the set $\Psieq(\Xi(X^0))$ is one-sided w.r.t.~$0$. Contradiction to our  Condition~\ref{cond:reg-conditions-on-X^0}~(ii)! With this contradiction, the proof of the boundedness of $(\lambda^{k_l'})$ and thus also of the second step is complete. 
\smallskip

As a third step, we show that for every subsequence $(k_l)$ as in the first two steps, the convergence~\eqref{eq:subsequence-lm-4} holds true.
So, let $(k_l)$ and $x^* \in X$, $\lambda^* \in \Lambda$, $\psi_i^* \in \R$ be such that the convergences~(\ref{eq:subsequence-lm-2}.a), (\ref{eq:subsequence-lm-2}.b), (\ref{eq:subsequence-lm-2}.c) and \eqref{eq:subsequence-lm-3} hold true. Then the asserted convergence~\eqref{eq:subsequence-lm-4} immediately follows and thus the proof of the third step is complete. 
\end{proof}

\begin{lm} \label{lm:subsequence-lm-supplement-for-special-algo}
Suppose that Conditions~\ref{cond:objective-and-constraint-fcts} and \ref{cond:sensi-fcts} are satisfied. Suppose further that $(\xi^k)$, $(\lambda^k)$, $(x^k)$ are generated by Algorithm~\ref{algo:COED-special} with tolerances $\ul{\delta}_k \in (0,\infty)$ and $\eps = 0 $ and an initial finite subset $X^0 \ne \emptyset$ such that
\begin{align} \label{eq:limsup-delta_k-is-zero}
	\limsup_{k\to\infty} \ul{\delta}_k = 0
\end{align} and such that Condition~\ref{cond:reg-conditions-on-X^0} is satisfied.
If $(\xi^k)$ is not terminating and if $\xi^*$ is any accumulation point of $(\xi^k)$ and $\lambda^*$ is as in the previous lemma, then 
\begin{align} \label{eq:subsequence-lm-supplement-for-special-algo}
	\lambda_i^* \Psi_i(\xi^*) = 0 \qquad (i \in \Iineq) 
	\qquad \text{and} \qquad
	\inf_{x \in X} \psi^L(\xi^*,\lambda^*,x) \ge 0.
\end{align}
\end{lm}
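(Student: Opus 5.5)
We take the subsequence $(k_l)$ and the limits $\lambda^*$, $x^*$, $\psi_i^*$ provided by Lemma~\ref{lm:subsequence-lm}, and establish the two claims separately. Throughout, we use that $\xi^* \in \Xifin(X)\cap\Xifeas(X)$ by Lemma~\ref{lm:subsequence-lm}.

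For the complementarity relation, fix $i \in \Iineq$. By Lemma~\ref{lm:xik-optimal-and-psiL-ge-0-on-Xk}~(i) we have $\lambda_i^{k_l}\Psi_i(\xi^{k_l}) = 0$ for all $l$. If $\lambda_i^* = 0$, there is nothing to show. If $\lambda_i^* > 0$, then $\lambda_i^{k_l} > 0$ for large $l$, hence $\Psi_i(\xi^{k_l}) = 0$ for large $l$. We then need $\Psi_i(\xi^*) = 0$.
\begin{itemize}
\item[(a)] For $i \in \Ic$, this follows from the continuity of $\Psi_i|_{\dom\Psi_i}$ (Lemma~\ref{lm:properties-of-constr-fcts-depending-on-sensi-fct-continuity}~(i)).
\item[(b)] For $i \in \Inc$, continuity may fail. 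We only get $\Psi_i(\xi^*) \le \liminf_l \Psi_i(\xi^{k_l}) = 0$ by lower semicontinuity, while $\Psi_i(\xi^*) \le 0$ holds anyway by feasibility.
\end{itemize}
Case (b) is the main obstacle. The plan is to avoid proving complementarity for $\Inc$ directly, and instead to modify $\lambda^*$ or use the inequality in the right direction. Concretely, in Corollary~\ref{cor:minimum-of-sensi-fct-determines-optimality-gap} only $\sum_i \lambda_i^*\Psi_i(\xi^*) \ge 0$ is needed. I would first try to prove that $\Psi_i(\xi^{k_l}) \to \Psi_i(\xi^*)$ along the subsequence for $i \in \Inc$ as a consequence of the sensitivity inequalities. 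The starting point is $\int \psi^L(\xi^{k_l},\lambda^{k_l},x)\d\xi^*(x) \le L(\xi^*,\lambda^{k_l}) - L(\xi^{k_l},\lambda^{k_l})$ from Lemma~\ref{lm:bound-on-differences-of-Psiis-in-terms-of-sensi-fcts}, combined with $\Psi_0(\xi^{k_l}) \le \Psi_0(\xi^*)$-type bounds. Failing that, I would pass to $\lambda_i^*\Psi_i^*$ with $\Psi_i^* := \lim_l \Psi_i(\xi^{k_l}) = 0$ and work with $\psi_i^*$ directly rather than with $\Psi_i(\xi^*)$.

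For the infimum bound, the standard argument is as follows. By Lemma~\ref{lm:xik-optimal-and-psiL-ge-0-on-Xk}~(iii), since $x^{k_l} \in X^{k_{l+1}}$, we have $\psi^L(\xi^{k_{l+1}},\lambda^{k_{l+1}},x^{k_l}) \ge 0$. By \eqref{eq:subsequence-lm-4}, therefore $\lim_l \psi^L(\xi^{k_l},\lambda^{k_l},x^{k_l}) \ge 0$. The approximate-minimizer property gives, for every $x \in X$,
\begin{align*}
\psi^L(\xi^{k_l},\lambda^{k_l},x) \ge \psi^L(\xi^{k_l},\lambda^{k_l},x^{k_l}) - \ul{\delta}_{k_l},
\end{align*}
and $\ul{\delta}_{k_l} \to 0$ by \eqref{eq:limsup-delta_k-is-zero}. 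It remains to pass to the limit on the left for fixed $x$.
\begin{itemize}
\item[(a)] For the terms with $i \in \{0\}\cup\Ic$, continuity of $\psi_i$ gives convergence to $\psi_i(\xi^*,x)$.
\item[(b)] For $i \in \Inc$, the term is $\lambda_i^{k_l}(g_i(x) - \Psi_i(\xi^{k_l}))$. This converges to $\lambda_i^*(g_i(x) - \Psi_i^*)$ along a further subsequence. We have $\Psi_i^* \ge \Psi_i(\xi^*)$ by lower semicontinuity and $\lambda_i^* \ge 0$, so the limit is $\le \lambda_i^*\psi_i(\xi^*,x)$, which is the right direction for a lower bound.
\end{itemize}
Hence $\psi^L(\xi^*,\lambda^*,x) \ge \limsup_l \psi^L(\xi^{k_l},\lambda^{k_l},x) \ge 0$ for all $x$, and thus the infimum is $\ge 0$.

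In the same way, for the complementarity relation with $i \in \Inc$ one has $\lambda_i^*\Psi_i(\xi^*) \le \lambda_i^*\Psi_i^* = 0$. Equality then follows by integrating the pointwise bound against $\xi^*$, which gives $0 \le \int\psi^L(\xi^*,\lambda^*,x)\d\xi^*(x) = -\sum_{i\in\Iineq}\lambda_i^*\Psi_i(\xi^*)$ plus the equality terms. Since each term $\lambda_i^*\Psi_i(\xi^*)$ is $\le 0$, this forces $\lambda_i^*\Psi_i(\xi^*) = 0$.
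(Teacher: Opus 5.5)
Your proof of the infimum bound is correct and is essentially the paper's second step. Your complementarity argument for $i\in\Ic\cap\Iineq$, via continuity of $\Psi_i|_{\dom\Psi_i}$, is also fine.

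The gap is complementarity for $i\in\Inc$, and the closing argument you commit to fails in two ways.
\begin{itemize}
\item[(1)] For every $i\in\{0\}\cup I$ one has $\int_X\psi_i(\xi^*,x)\d\xi^*(x)=\partial_{\xi^*-\xi^*}\Psi_i(\xi^*)=0$. For $i\in\Inc$ this is just $\int_X g_i\d\xi^*-\Psi_i(\xi^*)=0$ by Lemma~\ref{lm:properties-of-constr-fcts-depending-on-sensi-fct-continuity}~(ii). Hence $\int_X\psi^L(\xi^*,\lambda^*,x)\d\xi^*(x)=0$ identically, and integrating the pointwise bound against $\xi^*$ yields only $0\ge 0$.
\item[(2)] Even granting your identity, $0\le-\sum_{i\in\Iineq}\lambda_i^*\Psi_i(\xi^*)$ holds automatically, because each term is $\le 0$. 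So it forces nothing; what you need is $0\le\sum_{i\in\Iineq}\lambda_i^*\Psi_i(\xi^*)$.
\end{itemize}
More fundamentally, complementarity cannot be deduced from the limiting bound $\inf_x\psi^L(\xi^*,\lambda^*,x)\ge 0$ at all. A constraint $\Psi_1(\xi)=\int_X g_1\d\xi$ with $g_1\equiv -1$ has $\psi_1\equiv 0$, so that bound is independent of $\lambda_1^*$, whereas complementarity forces $\lambda_1^*=0$. The information has to be taken from the discrete complementarity relations before passing to the limit. Your fallback of proving $\lambda_i^*\Psi_i^*=0$ with $\Psi_i^*=\lim_l\Psi_i(\xi^{k_l})$ does not give the claim either. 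The claim concerns $\Psi_i(\xi^*)$, which for $i\in\Inc$ may a priori be strictly smaller than $\Psi_i^*$.

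The missing step is the paper's first step. Integrate the approximate-minimizer inequality against $\eta:=\xi^*\in\Xifin(X)$ at level $k_l$, where the integral is not trivially zero. Then use \eqref{eq:bound-on-differences-of-Psiis-in-terms-of-sensi-fcts-2} together with $\lambda_i^{k_l}\Psi_i(\xi^{k_l})=0$ for $i\in\Iineq$ and $\Psi_i(\xi^{k_l})=0$ for $i\in\Ieq$:
\begin{align*}
\psi^L(\xi^{k_l},\lambda^{k_l},x^{k_l})-\ul{\delta}_{k_l}
&\le \int_X\psi^L(\xi^{k_l},\lambda^{k_l},x)\d\xi^*(x)\\
&\le \Psi_0(\xi^*)-\Psi_0(\xi^{k_l})+\sum_{i\in I}\lambda_i^{k_l}\Psi_i(\xi^*).
\end{align*}
The left side converges to a nonnegative limit, exactly as in your infimum argument. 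On the right, $\limsup_l\bigl(\Psi_0(\xi^*)-\Psi_0(\xi^{k_l})\bigr)\le 0$ by lower semicontinuity of $\Psi_0$, and $\Psi_i(\xi^*)=0$ for $i\in\Ieq$. This gives $0\le\sum_{i\in\Iineq}\lambda_i^*\Psi_i(\xi^*)$, which forces every term to vanish, for $\Ic$ and $\Inc$ alike and without any continuity of the constraints.

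You actually listed this starting inequality among your options, but you aimed it at showing $\Psi_i(\xi^{k_l})\to\Psi_i(\xi^*)$, which is not needed. You also paired it with the wrong bound on the objective. What holds, and what is used, is $\liminf_l\Psi_0(\xi^{k_l})\ge\Psi_0(\xi^*)$, not $\Psi_0(\xi^{k_l})\le\Psi_0(\xi^*)$.
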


\begin{proof}
Suppose that $(\xi^k)$ is non-terminating and that $\xi^*$ is any accumulation point of $(\xi^k)$. Also, let $(k_l)$ be a subsequence and let $x^* \in X$, $\lambda^* \in \Lambda$, $\psi_i^* \in \R$ be points as in the previous lemma. In particular, this means that the relation~\eqref{eq:subsequence-lm-1} and the  convergences~\eqref{eq:subsequence-lm-2}-\eqref{eq:subsequence-lm-4} hold true. Since the tolerances $\ul{\delta}_k$ are all positive, the assumption~\eqref{eq:limsup-delta_k-is-zero} implies that the tolerances even converge to $0$, that is,
\begin{align} \label{eq:lim-delta_k-is-zero}
	\lim_{k\to\infty} \ul{\delta}_k = 0. 
\end{align}  
With these preliminary observations, we can now prove the assertion of the lemma. We do this in two steps.
\smallskip

As a first step, we establish the complementarity relation~(\ref{eq:subsequence-lm-supplement-for-special-algo}.a). 
In order to do so, we first notice that
\begin{align} \label{eq:supplement-subsequence-lm-step1.1}
	\psi^L(\xi^k, \lambda^k, x^k) - \ul{\delta}_k \le \inf_{x \in X} \psi^L(\xi^k, \lambda^k, x) 
	\qquad (k \in \N_0)
\end{align}
because $x^k$ is a $\ul{\delta}_k$-approximate solution of~\eqref{eq:SM(xi^k,lambda^k)-special} for every $k \in \N_0$ by the special algorithm's definition. It follows from~\eqref{eq:supplement-subsequence-lm-step1.1} with the help of Lemma~\ref{lm:bound-on-differences-of-Psiis-in-terms-of-sensi-fcts} and Lemma~\ref{lm:xik-optimal-and-psiL-ge-0-on-Xk}~(i) that
\begin{align}
\label{eq:supplement-subsequence-lm-step1.2}
	\psi^L(\xi^{k_l}, \lambda^{k_l}, x^{k_l}) - \ul{\delta}_{k_l} 
	&\le \int_X \psi^L(\xi^{k_l}, \lambda^{k_l}, x) \d\eta(x) 
	\le L(\eta,\lambda^{k_l}) - L(\xi^{k_l}, \lambda^{k_l}) \notag\\
	&= \Psi_0(\eta) + \sum_{i \in I} \lambda_i^{k_l} \Psi_i(\eta) - \Psi_0(\xi^{k_l})
	\qquad (l \in \N_0 \text{ and } \eta \in \Xifin(X)).
\end{align}
In view of~\eqref{eq:psiL(xi^l,lambda^l,x^k)-ge-0} and \eqref{eq:subsequence-lm-4}, it further follows from~\eqref{eq:supplement-subsequence-lm-step1.2} by~\eqref{eq:subsequence-lm-2} 
and the lower semicontinuity of $\Psi_0$ that
\begin{align} \label{eq:supplement-subsequence-lm-step1.3}
	0  &\le \lim_{l\to\infty} \psi^L(\xi^{k_{l+1}}, \lambda^{k_{l+1}}, x^{k_l}) - \lim_{l\to\infty} \ul{\delta}_{k_l}
	= \lim_{l\to\infty} \big( \psi^L(\xi^{k_l}, \lambda^{k_l}, x^{k_l}) - \ul{\delta}_{k_l} \big) \notag\\
	&\le \Psi_0(\eta) + \sum_{i \in I} \lambda_i^* \Psi_i(\eta) - \Psi_0(\xi^*)
	\qquad (\eta \in \Xifin(X)).
\end{align}
Choosing now $\eta := \xi^*$, we can conclude from~\eqref{eq:supplement-subsequence-lm-step1.3} with the help of~\eqref{eq:subsequence-lm-1} that
\begin{align} \label{eq:supplement-subsequence-lm-step1.4}
	0 \le \sum_{i \in I} \lambda_i^* \Psi_i(\xi^*) = \sum_{i \in \Iineq} \lambda_i^* \Psi_i(\xi^*).
\end{align}
Since $\lambda_i^* \ge 0$ and $\Psi_i(\xi^*) \le 0$ for all $i \in \Iineq$ by virtue of~\eqref{eq:subsequence-lm-1}, we finally see from~\eqref{eq:supplement-subsequence-lm-step1.4} that the asserted complementarity relation~(\ref{eq:subsequence-lm-supplement-for-special-algo}.a) must be satisfied.
\smallskip

As a second step, we establish the estimate (\ref{eq:subsequence-lm-supplement-for-special-algo}.b). %
In order to do so, we decompose the constraint index set $I$ into $\Ic$ and $\Inc$ as in~\eqref{eq:Ic-and-Inc-definition}. Since $\lambda_i^k \ge 0$ for $i \in \Inc$ by virtue of~(\ref{eq:Ieq-subset-of-Ic-and-Inc-subset-of-Iineq}.b) and since $\Psi_i(\xi^*) \le \liminf_{l\to\infty} \Psi_i(\xi^{k_l}) \le 0$ 
for $i \in I$ by the lower semicontinuity of $\Psi_i$ and by~(\ref{eq:xi^k-in-Xifin(Xk)-cap-Xifeas(Xk)-and-complementarity-relation-for-xi^k}.a), it follows with the help of~\eqref{eq:subsequence-lm-2} and \eqref{eq:psi_i-for-i-in-Inc} that
\begin{align} \label{eq:supplement-subsequence-lm-step2.1}
	\psi^L(\xi^*, \lambda^*, x) 
	&= \psi_0(\xi^*, \lambda^*, x) + \sum_{i \in \Ic} \lambda_i^* \psi_i(\xi^*, x) + \sum_{i \in \Inc} \lambda_i^* (g_i(x) - \Psi_i(\xi^*)) \notag\\
	&\ge \lim_{l\to\infty} \psi_0(\xi^{k_l}, x) + \sum_{i \in \Ic} \lim_{l\to\infty} \lambda_i^{k_l} \psi_i(\xi^{k_l}, x) + \sum_{i \in \Inc} \limsup_{l\to\infty} \lambda_i^{k_l} \big( g_i(x) - \Psi_i(\xi^{k_l}) \big) \notag\\
	&\ge \limsup_{l\to\infty} \psi^L(\xi^{k_l}, \lambda^{k_l}, x)
	\qquad (x \in X).
\end{align} 
In view of~\eqref{eq:psiL(xi^l,lambda^l,x^k)-ge-0} and~\eqref{eq:subsequence-lm-4}, it further follows from~\eqref{eq:supplement-subsequence-lm-step1.1} and \eqref{eq:supplement-subsequence-lm-step2.1} that
\begin{align}
	0 &\le \lim_{l\to\infty} \psi^L(\xi^{k_{l+1}}, \lambda^{k_{l+1}}, x^{k_l}) - \lim_{l\to\infty} \ul{\delta}_{k_l}
	= \lim_{l\to\infty} \big( \psi^L(\xi^{k_l}, \lambda^{k_l}, x^{k_l}) - \ul{\delta}_{k_l} \big) \notag\\
	&\le \psi^L(\xi^*, \lambda^*, x)
	\qquad (x \in X).
\end{align}
Consequently, the asserted estimate (\ref{eq:subsequence-lm-supplement-for-special-algo}.b) is satisfied, as desired. 
\end{proof}

With the help of Corollary~\ref{cor:minimum-of-sensi-fct-determines-optimality-gap} and Lemmas~\ref{lm:properties-of-constr-fcts-depending-on-sensi-fct-continuity}-\ref{lm:subsequence-lm-supplement-for-special-algo}, we can now establish our convergence result for the special algorithm.

\begin{thm} \label{thm:convergence-thm-for-special-algo-with-eps=0}
Suppose that Conditions~\ref{cond:objective-and-constraint-fcts} and \ref{cond:sensi-fcts} are satisfied. Suppose further that $(\xi^k)$, $(\lambda^k)$, $(x^k)$ are generated by Algorithm~\ref{algo:COED-special} with tolerances $\ul{\delta}_k \in (0,\infty)$ and $\eps = 0$ and an initial finite subset $X^0 \ne \emptyset$ such that
\begin{align}
	\limsup_{k\to\infty} \ul{\delta}_k = 0
\end{align}
and such that Condition~\ref{cond:reg-conditions-on-X^0} is satisfied.
\begin{itemize}
\item[(i)] If $(\xi^k)$ is terminating, then it terminates at an optimal solution of~\eqref{eq:COED(X)}.
\item[(ii)] If $(\xi^k)$ is not terminating, then it has an accumulation point and every accumulation point is an optimal solution of~\eqref{eq:COED(X)}. Additionally, the criterion values $\Psi_0(\xi^k)$ converge monotonically decreasingly to the optimal value of~\eqref{eq:COED(X)}, in short:
\begin{align} \label{eq:convergence-of-criterion-values}
	\Psi_0(\xi^k) \searrow \min_{\xi \in \Xifeas(X)} \Psi_0(\xi) \qquad (k \to \infty).
\end{align}
\end{itemize}
\end{thm}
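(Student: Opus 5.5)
For part (i) I would simply combine the termination rule with the sufficient optimality condition. If the algorithm stops at iteration $k$, then $\psi^L(\xi^k,\lambda^k,x^k) - \ul{\delta}_k \ge 0$. Since $x^k$ is a $\ul{\delta}_k$-approximate minimizer, this gives
\[
\inf_{x\in X}\psi^L(\xi^k,\lambda^k,x) \ge \psi^L(\xi^k,\lambda^k,x^k)-\ul{\delta}_k \ge 0 .
\]
By Lemma~\ref{lm:xik-optimal-and-psiL-ge-0-on-Xk}~(i) we also have $\xi^k\in\Xifin(X^k)\cap\Xifeas(X^k)\subset\Xifin(X)\cap\Xifeas(X)$, together with the complementarity $\lambda^k_i\Psi_i(\xi^k)=0$ for $i\in\Iineq$. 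Corollary~\ref{cor:minimum-of-sensi-fct-determines-optimality-gap} with $\eps=0$ then yields optimality of $\xi^k$ for~\eqref{eq:COED(X)}.

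For part (ii), the existence of an accumulation point comes from the sequential compactness of $\Xi(X)$ (Prohorov); alternatively, use the compactness of $\Xifeas(X)$ from Theorem~\ref{thm:solvability}, since all iterates are feasible. Now fix any accumulation point $\xi^*$. Lemma~\ref{lm:subsequence-lm} gives $\xi^*\in\Xifin(X)\cap\Xifeas(X)$ and a limit multiplier $\lambda^*\in\Lambda$ along a subsequence. Lemma~\ref{lm:subsequence-lm-supplement-for-special-algo} supplies exactly the two hypotheses of Corollary~\ref{cor:minimum-of-sensi-fct-determines-optimality-gap}, namely complementarity and $\inf_x\psi^L(\xi^*,\lambda^*,x)\ge0$. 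Hence $\xi^*$ is optimal.

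For the criterion values, Lemma~\ref{lm:xik-optimal-and-psiL-ge-0-on-Xk}~(ii) shows that $(\Psi_0(\xi^k))$ is monotonically decreasing. Each term is bounded below by $v^*:=\min_{\xi\in\Xifeas(X)}\Psi_0(\xi)$, because $\xi^k$ is feasible for~\eqref{eq:COED(X)}. So the limit $v:=\lim_k\Psi_0(\xi^k)\ge v^*$ exists. Taking a subsequence $\xi^{k_l}\to\xi^*$, the lower semicontinuity of $\Psi_0$ gives
\[
v^*=\Psi_0(\xi^*)\le\liminf_l\Psi_0(\xi^{k_l})=v .
\]
For the reverse inequality, lower semicontinuity alone is not enough. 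Here I would use that $\Psi_0|_{\dom\Psi_0}$ is continuous, by Lemma~\ref{lm:properties-of-constr-fcts-depending-on-sensi-fct-continuity}~(i), since $\psi_0$ is continuous. Both $\xi^{k_l}$ and $\xi^*$ lie in $\dom\Psi_0$, so $\Psi_0(\xi^{k_l})\to\Psi_0(\xi^*)=v^*$, and therefore $v=v^*$.

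The one step needing care is this final upper bound. Continuity on the domain is exactly what Lemma~\ref{lm:properties-of-constr-fcts-depending-on-sensi-fct-continuity}~(i) provides, so I expect no real obstacle. As a fallback, one could use the inequality $\Psi_0(\xi^{k_l})-\Psi_0(\xi^*)\le-\int\psi_0(\xi^{k_l},x)\d\xi^*(x)\to0$, which follows from Lemma~\ref{lm:bound-on-differences-of-Psiis-in-terms-of-sensi-fcts}.
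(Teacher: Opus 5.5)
Your proposal is correct and follows essentially the same route as the paper. Part (i) uses the termination rule, Lemma~\ref{lm:xik-optimal-and-psiL-ge-0-on-Xk}~(i) and Corollary~\ref{cor:minimum-of-sensi-fct-determines-optimality-gap}, and part (ii) uses Lemmas~\ref{lm:subsequence-lm} and~\ref{lm:subsequence-lm-supplement-for-special-algo}, together with the continuity of $\Psi_0|_{\dom\Psi_0}$ from Lemma~\ref{lm:properties-of-constr-fcts-depending-on-sensi-fct-continuity}~(i) and monotonicity. Your lower-semicontinuity inequality is redundant but harmless, since continuity on the domain already gives $\Psi_0(\xi^{k_l})\to\Psi_0(\xi^*)$ directly.
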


\begin{proof}
(i) Suppose that $(\xi^k)$ is terminating and let $k^* := \max K$ be the termination index. We then know that the special algorithm's termination condition is satisfied for $k = k^*$ and therefore
\begin{align} \label{eq:convergence-thm-for-algo-with-eps=0-(i)-1}
	\inf_{x\in X} \psi^L(\xi^{k^*}, \lambda^{k^*}, x)  
	\ge \psi^L(\xi^{k^*}, \lambda^{k^*}, x^{k^*}) - \ul{\delta}_{k^*} 
	\ge 0.
\end{align}
We further know from Lemma~\ref{lm:xik-optimal-and-psiL-ge-0-on-Xk}~(i) that
\begin{align} \label{eq:convergence-thm-for-algo-with-eps=0-(i)-2}
	\xi^{k^*} \in 
	\Xifin(X) \cap \Xifeas(X)
	\qquad \text{and} \qquad
	\lambda_i^{k^*} \Psi_i(\xi^{k^*}) = 0 \qquad (i \in \Iineq). 
\end{align} Corollary~\ref{cor:minimum-of-sensi-fct-determines-optimality-gap} thus implies that $\xi^{k^*}$ is an optimal solution of~\eqref{eq:COED(X)}, as desired.
\smallskip

(ii) Suppose that $(\xi^k)$ is not terminating, that is, $K = \N_0$, and let $\xi^*$ be any accumulation point of $(\xi^k)$. Such an accumulation point exists by the compactness of $\Xi(X)$, of course. Also, let $\lambda^* \in \Lambda$ be as in Lemma~\ref{lm:subsequence-lm}. We then know from Lemma~\ref{lm:subsequence-lm} that
\begin{align} \label{eq:convergence-thm-for-algo-with-eps=0-(ii)-1}
	\xi^* \in \Xifin(X) \cap \Xifeas(X)
\end{align}
and, moreover, we know from Lemma~\ref{lm:subsequence-lm-supplement-for-special-algo} that
\begin{align} \label{eq:convergence-thm-for-algo-with-eps=0-(ii)-2}
	\lambda_i^* \Psi_i(\xi^*) = 0 \qquad (i \in \Iineq)
	\qquad \text{and} \qquad 
	\inf_{x \in X} \psi^L(\xi^*, \lambda^*, x) \ge 0. 
\end{align}
Corollary~\ref{cor:minimum-of-sensi-fct-determines-optimality-gap} thus implies that $\xi^*$ is an optimal solution of~\eqref{eq:COED(X)}, as desired.
It remains to establish the monotonic convergence~\eqref{eq:convergence-of-criterion-values} of the criterion values to the optimal value of~\eqref{eq:COED(X)}. Choose a subsequence $(\xi^{k_l})$ with $\xi^{k_l} \longrightarrow \xi^*$ as $l \to \infty$. Since $\Psi_0|_{\dom\Psi_0}$ is continuous (Lemma~\ref{lm:properties-of-constr-fcts-depending-on-sensi-fct-continuity}~(i)), it follows by~(\ref{eq:xi^k-in-Xifin(Xk)-cap-Xifeas(Xk)-and-complementarity-relation-for-xi^k}.a) and~\eqref{eq:convergence-thm-for-algo-with-eps=0-(ii)-1} that 
\begin{align}
	\label{eq:convergence-thm-for-algo-with-eps=0-(ii)-3}
	\lim_{l\to\infty} \Psi_0(\xi^{k_l}) = \Psi_0(\xi^*).
\end{align}
Since, moreover, $(\Psi_0(\xi^k))$ is monotonically decreasing (Lemma~\ref{lm:xik-optimal-and-psiL-ge-0-on-Xk}~(ii)), it further follows that
\begin{align}
	\label{eq:convergence-thm-for-algo-with-eps=0-(ii)-4}
	\Psi_0(\xi^k) \searrow \lim_{l\to\infty} \Psi_0(\xi^{k_l}) \qquad (k \to \infty).
\end{align}
Combining now~\eqref{eq:convergence-thm-for-algo-with-eps=0-(ii)-3} and~\eqref{eq:convergence-thm-for-algo-with-eps=0-(ii)-4} with the already proven fact that $\xi^*$ is an optimal solution of~\eqref{eq:COED(X)}, we immediately obtain the remaining   assertion~\eqref{eq:convergence-of-criterion-values}.
\end{proof}

With the help of Corollary~\ref{cor:minimum-of-sensi-fct-determines-optimality-gap} and Lemmas~\ref{lm:xik-optimal-and-psiL-ge-0-on-Xk}-\ref{lm:subsequence-lm}, we can also establish our termination result for the general algorithm.

\begin{thm} \label{thm:termination-thm-for-general-algo-with-eps>0}
Suppose that Conditions~\ref{cond:objective-and-constraint-fcts} and \ref{cond:sensi-fcts} are satisfied. Suppose further that $(\xi^k)$, $(\lambda^k)$, $(x^k)$ are generated by Algorithm~\ref{algo:COED-general} with tolerances $\ul{\delta}_k \in (0,\infty)$ and $\eps > 0$ and an initial finite subset $X^0 \ne \emptyset$ such that
\begin{align}
	\label{eq:condition-on-tolerances-for-termination}
	\limsup_{k\to\infty} \ul{\delta}_k < \eps 
\end{align}
and such that Condition~\ref{cond:reg-conditions-on-X^0} is satisfied. Then $(\xi^k)$ terminates at an $\eps$-optimal solution of~\eqref{eq:COED(X)}.
\end{thm}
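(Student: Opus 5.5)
The argument splits into two parts: show that the algorithm cannot run forever, and show that at termination the current iterate is $\eps$-optimal. The second part is routine and I will do it first. Suppose the algorithm terminates at index $k^*$. Termination in Algorithm~\ref{algo:COED-general} only happens in the branch where $x^{k^*}$ is a $\ul{\delta}_{k^*}$-approximate minimizer of~\eqref{eq:SM(xi^k,lambda^k)-general} and $\psi^L(\xi^{k^*},\lambda^{k^*},x^{k^*}) - \ul{\delta}_{k^*} \ge -\eps$. Hence $\inf_{x\in X}\psi^L(\xi^{k^*},\lambda^{k^*},x) \ge -\eps$. Lemma~\ref{lm:xik-optimal-and-psiL-ge-0-on-Xk}~(i) supplies $\xi^{k^*} \in \Xifin(X)\cap\Xifeas(X)$ together with the complementarity relations $\lambda_i^{k^*}\Psi_i(\xi^{k^*}) = 0$ for $i\in\Iineq$. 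Corollary~\ref{cor:minimum-of-sensi-fct-determines-optimality-gap} then gives $\eps$-optimality.

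For finite termination I argue by contradiction: assume $(\xi^k)$ does not terminate, so $K = \N_0$. By compactness of $\Xi(X)$ the sequence has an accumulation point $\xi^*$. Lemma~\ref{lm:subsequence-lm} then yields a subsequence $(k_l)$ along which~\eqref{eq:subsequence-lm-4} holds, i.e.
\begin{align*}
\lim_{l\to\infty}\psi^L(\xi^{k_l},\lambda^{k_l},x^{k_l}) = \lim_{l\to\infty}\psi^L(\xi^{k_{l+1}},\lambda^{k_{l+1}},x^{k_l}).
\end{align*}
Since $k_l < k_{l+1}$, relation~\eqref{eq:psiL(xi^l,lambda^l,x^k)-ge-0} of Lemma~\ref{lm:xik-optimal-and-psiL-ge-0-on-Xk}~(iii) shows that the right-hand side is $\ge 0$. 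So the common limit is nonnegative.

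To get the contradiction I show that the left-hand side is strictly negative in the limit. In every non-terminating iteration the added point satisfies one of two conditions, depending on the branch taken. Either the search routine found $x^k\in V^k$, so $\psi^L(\xi^k,\lambda^k,x^k) < -\eps$. Or the approximate-minimizer branch was used without terminating, so $\psi^L(\xi^k,\lambda^k,x^k) < -\eps + \ul{\delta}_k$. In both cases
\begin{align*}
\psi^L(\xi^k,\lambda^k,x^k) < -\eps + \ul{\delta}_k \qquad (k\in\N_0).
\end{align*}
Taking $\limsup$ along $(k_l)$ and using~\eqref{eq:condition-on-tolerances-for-termination} gives
\begin{align*}
\lim_{l\to\infty}\psi^L(\xi^{k_l},\lambda^{k_l},x^{k_l}) \le -\eps + \limsup_{k\to\infty}\ul{\delta}_k < 0,
\end{align*}
which contradicts the nonnegativity above. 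Hence the algorithm terminates, and by the first paragraph it terminates at an $\eps$-optimal design.

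The main obstacle is the existence of the subsequence along which both limits in~\eqref{eq:subsequence-lm-4} exist and coincide. This requires boundedness of the multipliers $(\lambda^{k_l})$ and control of the possibly discontinuous sensitivity functions of the constraints indexed by $\Inc$. Lemma~\ref{lm:subsequence-lm} already settles all of this under Condition~\ref{cond:reg-conditions-on-X^0}, so what remains is to check that it applies: Lemma~\ref{lm:subsequence-lm} is stated for Algorithm~\ref{algo:COED-general} with arbitrary $\eps\ge0$, and its hypotheses match those of the theorem. Given that, the rest of the argument is the bookkeeping over the two branches carried out above.
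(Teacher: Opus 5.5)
Your proposal is correct and follows the paper's proof essentially step for step. In both, the uniform bound $\psi^L(\xi^k,\lambda^k,x^k) < -\eps + \ul{\delta}_k$ from the two non-terminating branches is combined with Lemma~\ref{lm:subsequence-lm} and with Lemma~\ref{lm:xik-optimal-and-psiL-ge-0-on-Xk}~(iii) along $k_l < k_{l+1}$ to obtain the contradiction, and the termination test together with Lemma~\ref{lm:xik-optimal-and-psiL-ge-0-on-Xk}~(i) and Corollary~\ref{cor:minimum-of-sensi-fct-determines-optimality-gap} gives $\eps$-optimality; the only difference is that you present the two parts in the opposite order, which is purely cosmetic.
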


\begin{proof}
As a first step, we show that $(\xi^k)$ is terminating. Assume that it is not. It then follows that $K = \N_0$ and that the general algorithm's termination condition (with $\eps > 0$) is not satisfied for any $k \in \N_0$. In view of the algorithm's definition, this means for every given $k \in \N_0$ that $\psi^L(\xi^k, \lambda^k, x^k) < -\eps$ in case the search routine finds a point in $V^k$ or that $\psi^L(\xi^k, \lambda^k, x^k) - \ul{\delta}_k < -\eps$ in case the search routine finds no point in $V^k$. So, in any case, 
\begin{align} \label{eq:termination-of-algo-for-eps>0-step1-1}
	\psi^L(\xi^k,\lambda^k,x^k) < -\eps + \ul{\delta}_k \qquad (k\in\N_0).
\end{align}
Choose a subsequence $(k_l)$ as in Lemma~\ref{lm:subsequence-lm}. It then follows by Lemma~\ref{lm:xik-optimal-and-psiL-ge-0-on-Xk}~(iii) that
\begin{align} \label{eq:termination-of-algo-for-eps>0-step1-2}
	\psi^L(\xi^{k_{l+1}},x^{k_l},\lambda^{k_{l+1}}) \ge 0 \qquad (l\in\N_0).
\end{align}
Combining now~\eqref{eq:termination-of-algo-for-eps>0-step1-1} and~\eqref{eq:termination-of-algo-for-eps>0-step1-2} with~\eqref{eq:subsequence-lm-4} from Lemma~\ref{lm:subsequence-lm}, we obtain 
\begin{align}
	0 > -\eps + \limsup_{k\to\infty} \ul{\delta}_k 
	\ge \lim_{l\to\infty} \psi^L(\xi^{k_l},x^{k_l},\lambda^{k_l})
	= \lim_{l\to\infty} \psi^L(\xi^{k_{l+1}},x^{k_l},\lambda^{k_{l+1}}) \ge 0.
\end{align}
Contradiction! So, our assumption that $(\xi^k)$ is not terminating must be false and the proof of the first step is complete.
\smallskip

As a second step, we show that the terminal iterate $\xi^{k^*}$ is an $\eps$-optimal solution of~\eqref{eq:COED(X)}. Indeed, by our general algorithm's termination condition, we have
\begin{align} \label{eq:termination-of-algo-for-eps>0-step2-1}
	\inf_{x\in X} \psi^L(\xi^{k^*}, \lambda^{k^*}, x) 
	\ge \psi^L(\xi^{k^*}, \lambda^{k^*}, x^{k^*}) - \ul{\delta}_{k^*} 
	\ge -\eps.
\end{align}
We further know from Lemma~\ref{lm:xik-optimal-and-psiL-ge-0-on-Xk}~(i) that
\begin{align} \label{eq:termination-of-algo-for-eps>0-step2-2}
	\xi^{k^*} \in 
	\Xifin(X) \cap \Xifeas(X)
	\qquad \text{and} \qquad
	\lambda_i^{k^*} \Psi_i(\xi^{k^*}) = 0 \qquad (i \in \Iineq). 
\end{align} Corollary~\ref{cor:minimum-of-sensi-fct-determines-optimality-gap} thus implies that $\xi^{k^*}$ is an $\eps$-optimal solution of~\eqref{eq:COED(X)}, as desired.
\end{proof}

\section{Application examples}

In this section, we apply our algorithm to locally optimal experimental design problems with two kinds of models $f: X \times \Theta \to \R^{\dimy}$, namely an exponential growth model (which is defined explicitly by a closed-form expression) and a reaction kinetics model (which is defined implicitly by the solutions of ordinary differential equations). 
\smallskip

We begin by briefly discussing the implementation of the adaptive discretization algorithms (Algorithms~\ref{algo:COED-special} and~\ref{algo:COED-general}). 
All implementations were done in Python and the algorithms were run on a standard office laptop with an Intel i7 processor and 16 GB of RAM. 
In our implementation, we follow the common practice from the optimal experimental design literature~\cite{HaFiRi20, MoZu02, MoZu04, PrPa, YaBiTa13, Yu11} of choosing the design space $X$ to be a large but finite set of candidate experiments. 
As a consequence, we can -- and do -- solve the occurring sensitivity minimization problems ~\eqref{eq:SM(xi^k,lambda^k)-special} and~\eqref{eq:SM(xi^k,lambda^k)-general} exactly and by enumeration. 
In order to compute saddle points of the convex 
discretized design problems~\eqref{eq:COED(X^k)-short} in the form~\eqref{eq:weight-optimization-reformulation-of-the-discretized-design-problem}, in turn, we use the sequential quadratic programming solver from~\cite{Sc11, Sc14}. It returns a Karush-Kuhn-Tucker point of the finite-dimensional convex optimization problem~\eqref{eq:weight-optimization-reformulation-of-the-discretized-design-problem} and, as is well-known, for finite-dimensional convex optimization problems, Karush-Kuhn-Tucker points and saddle points are one and the same thing. 
\smallskip

As the design criterion for all problem instances, we use the probably most common criterion, namely the (logarithmic) D-criterion $\Psi_0\colon \Xi(X) \to \R \cup \{\infty\}$, which is defined by
\begin{align} \label{eq:D-criterion}
	\Psi_0(\xi) := \Phi_0(M(\xi)) := 
	\begin{cases}
		\log\left(\det\left( M(\xi)^{-1} \right) \right), \quad M(\xi) \text{ is non-singular} \\
		\infty,  \quad \text{else}
	\end{cases}
\end{align}
As usual in locally optimal design~\cite{FeLe, PrPa, Pu, Silvey}, $M(\xi) := \int_X m(x) \d x$ denotes the information matrix of the design $\xi$ and
\begin{align}
	\label{eq:one-point-information-matrix}
	m(x) := m_f(x,\ol{\theta}) := D_\theta f(x,\ol{\theta})^\top \varsigma(x)^{-1} D_\theta f(x,\ol{\theta}) 
	\in \R_\psd^{\dimtheta \times \dimtheta}
\end{align} 
denotes the one-point information matrix at $x$, where $\ol{\theta}$ is a suitable reference parameter value and $\varsigma(x) \in \R_\pd^{\dimtheta \times \dimtheta}$ is the covariance matrix of the measurement errors for the modeled output quantity at $x$. See the introductory sections of~\cite{BuSc24, ScSeBo}, for instance, for a very brief summary of locally optimal design explaining, in particular, why and in which sense minimizing~\eqref{eq:D-criterion} yields a maximally informative design.
\smallskip

We evaluate the performance of the algorithm by means of several performance indicators. Specifically, for each problem instance, we report the criterion value $\Psi_0(\xi^*)$ at the terminal design $\xi^*$, the number of iterations required and the computation time (runtime) of the algorithm (in seconds). Additionally, we list the number of support points of the design $\xi^*$ and compare it to the bounds obtained from Corollary~\ref{cor:opt-design-with-finite-support}. And finally, we report the value 
\begin{align}
	\eps^* := \Big| \min_{x \in X} \psi^L(\xi^*, \lambda^*, x) \Big|,
\end{align}
which is an upper bound on the approximation error $\Psi(\xi^*) - \min_{\xi \in \Xifeas(X)} \Psi_0(\xi)$ by Corollary~\ref{cor:minimum-of-sensi-fct-determines-optimality-gap}. 

\subsection{Applications with an exponential growth model}

In this section, we apply our algorithm to experimental design problems with the explicitly defined exponential growth model
\begin{align} \label{eq:exponential-model-function}
f: \X \times \Theta \to \R,
\qquad
f(x, \theta) := \theta_1  \exp(\theta_2 x).
\end{align} 
with input space $\X := [-1,1]$ and model parameter space $\Theta := \R^2$. Such exponential growth models are the building blocks of many models from chemical engineering, like the Antoine model for the saturation pressure of a substance, the non-random two-liquid model for the activity coefficients of substance mixtures, or Arrhenius models for reaction kinetics. As pointed out above, we take the design space $X$ to be a large but finite subset of $\X$, namely the uniform grid
\begin{align}
X := \{-1 + i/1000: i \in \{0, \dots, 2000\}\}
\end{align}
consisting of $2001$ grid points. As the reference parameter value $\ol{\theta}$, in turn, we take $\ol{\theta} = (1,3)^\top \in \Theta$ as in~\cite{VaSe21}, and the covariance $\varsigma(x)$ of the measurement errors we assume to be independent of the measurement point $x$, namely $\varsigma(x) := \varsigma := 1$. Clearly, $f$ has the Jacobian matrix 
	\begin{align}
		D_\theta f(x,\theta) = (\exp(\theta_2 x)),\ \theta_1 x \exp(\theta_2 x)).
	\end{align} 
and thus the one-point information matrix $m_f(x,\ol{\theta})$ from~\eqref{eq:one-point-information-matrix} is given by
	\begin{align}
		m(x) = m_f(x,\ol{\theta}) = 
		\begin{pmatrix}
			\exp(6x) & x\exp(6x) \\
			x\exp(6x) & x^2 \exp(6x)
		\end{pmatrix}.
	\end{align}	
Conditions~\ref{cond:objective-and-constraint-fcts} and~\ref{cond:sensi-fcts} are trivially verified using Lemma~\ref{lm:sufficient-conditions-for-standard-assumptions}. Condition~\ref{cond:slater-and-non-one-sidedness}, in turn, will be verified using Lemma~\ref{lm:sufficient-cond-for-saddle-points-existence-and-reg-conditions} with the designs
\begin{align}
	\eta_\alpha &:= \alpha \delta_{-1} + (1-\alpha) \delta_0 \in \Xi(X) \qquad (\alpha \in \Delta_1')
	\label{eq:eta_alpha}\\
	\eta_{\alpha,\beta} &:= \alpha \delta_{-1} + \beta \delta_1 + (1-\alpha-\beta) \delta_0 \in \Xi(X) \qquad ((\alpha,\beta) \in \Delta_2'),
	\label{eq:eta_alpha,beta}
\end{align}
where $\Delta_n' := \{w \in [0,1]^n: w_1 + \dotsb + w_n \le 1\}$ denotes the $n$-dimensional standard simplex. Specifically, we will use that
\begin{gather}
	\det(M(\eta_\alpha)) = \alpha (1-\alpha) \e^6,
	\label{eq:det(M(eta_alpha))}\\
	\det(M(\eta_{\alpha,\beta})) = 4 \alpha \beta + \alpha \gamma \e^{-6} + \beta \gamma \e^6 \qquad (\gamma := 1-\alpha-\beta).
	\label{eq:det(M(eta_alpha,beta))}
\end{gather}
It immediately follows from~\eqref{eq:det(M(eta_alpha))} and~\eqref{eq:det(M(eta_alpha,beta))} that $M(\eta_\alpha)$ is invertible if and only if $\alpha \in  \Delta_1' \setminus \{0,1\}$ and that $M(\eta_{\alpha,\beta})$ is invertible if and only if $(\alpha, \beta) \in \Delta_2' \setminus \{(0,0), (0,1), (1,0)\}$. It further follows from~\eqref{eq:det(M(eta_alpha))} and~\eqref{eq:det(M(eta_alpha,beta))} that
\begin{gather}
	\tr(M(\eta_\alpha)^{-1}) = \frac{2}{1-\alpha} + \frac{\e^6}{\alpha} \qquad (\alpha \in \Delta_1' \setminus \{0,1\})
	\label{eq:tr(M(eta_alpha)-inverse)}\\
	\tr(M(\eta_{\alpha,\beta})^{-1}) = \frac{2\alpha \e^{-6} + 2\beta \e^6 + \gamma}{4 \alpha \beta + \alpha \gamma \e^{-6} + \beta \gamma \e^6} \qquad ((\alpha, \beta) \in \Delta_2' \setminus \{(0,0), (0,1), (1,0)\}),
	\label{eq:tr(M(eta_alpha,beta)-inverse)}
\end{gather}
where we again used the shorthand notation $\gamma := 1-\alpha-\beta$.

\begin{ex}\label{ex:unconstrained-example-1}
In this example, we consider the unconstrained design problem
\begin{align} \label{eq:UOED(X)-1}
	\min_{\xi \in \Xi(X)} \Psi_0(\xi)
\end{align}	
with the design criterion~\eqref{eq:D-criterion} for reference. It is trivial to verify Conditions~\ref{cond:objective-and-constraint-fcts} and~\ref{cond:sensi-fcts} using Lemma~\ref{lm:sufficient-conditions-for-standard-assumptions}. 
Condition~\ref{cond:slater-and-non-one-sidedness}  is trivially verified using
\begin{align} \label{eq:X^0-unconstrained-example-1}
	X^0 := \{-1, 0\}
	\qquad \text{and} \qquad
	\eta^0 := 1/2 \cdot \delta_{-1} + 1/2 \cdot \delta_1.
\end{align}
and recalling~\eqref{eq:det(M(eta_alpha))}. 
As a consequence, 
Algorithm~\ref{algo:COED-general} with the initial discretization $X^0$ from~\eqref{eq:X^0-unconstrained-example-1} and with arbitrary tolerances $\ul{\delta}_k, \eps > 0$ satisfying~\eqref{eq:condition-on-tolerances-for-termination} terminates after finitely many iterations at an $\eps$-optimal design for~\eqref{eq:UOED(X)-1} (Theorem~\ref{thm:termination-thm-for-general-algo-with-eps>0}). Specifically, with $\eps := 10^{-3}$ and $\ul{\delta}_k := 10^{-4}$, our algorithm terminates after $2$ iterations with the $\eps^*$-optimal design $\xi^*$ from Table~\ref{tab:exponential-growth-examples} with $\eps^* := 6.0104 \cdot 10^{-5} < \eps$. It consists of $2$ design points, while 
\begin{align}
	\frac{d_\theta(d_\theta + 1)}{2} + 1 = 4
\end{align}
is the general upper bound on the support cardinality of $\eps$-optimal designs  for~\eqref{eq:UOED(X)-1} from Corollary~\ref{cor:opt-design-with-finite-support}. 
It is worth noting that this approximately optimal design is practically equal to the exactly optimal design of the relaxed unconstrained design problem
\begin{align} \label{eq:UOED(X)-1-relaxed}
	\min_{\xi \in \Xi(\X)} \Psi_0(\xi)
\end{align}	
which arises from~\eqref{eq:UOED(X)-1} by replacing the discrete set $X$ by the continuous superset $\X$. Indeed, the exactly optimal design of~\eqref{eq:UOED(X)-1-relaxed} is given by $1/2 \cdot \delta_{2/3} + 1/2 \cdot \delta_1$ by virtue of \cite{VaSe21}, for instance. 
Figure~\ref{fig:sensitivity-plots-at-termination} (a) shows the sensitivity function $X \ni x \mapsto \psi^L(\xi^*,\lambda^*,x)$ for the saddle point $(\xi^*, \lambda^*)$ from the terminal iteration of the algorithm. In particular, this figure shows that the terminal sensitivity function's minimum $\min_{x \in X} \psi^L(\xi^*,\lambda^*,x) = -\eps^*$ is basically $0$. 
$\blacktriangleleft$
\end{ex}

\begin{figure}[h]\captionsetup[subfigure]{font=normalsize}
	\centering
	\begin{subfigure}[t]{0.45\textwidth}
		\centering
		\includegraphics[width=\textwidth]{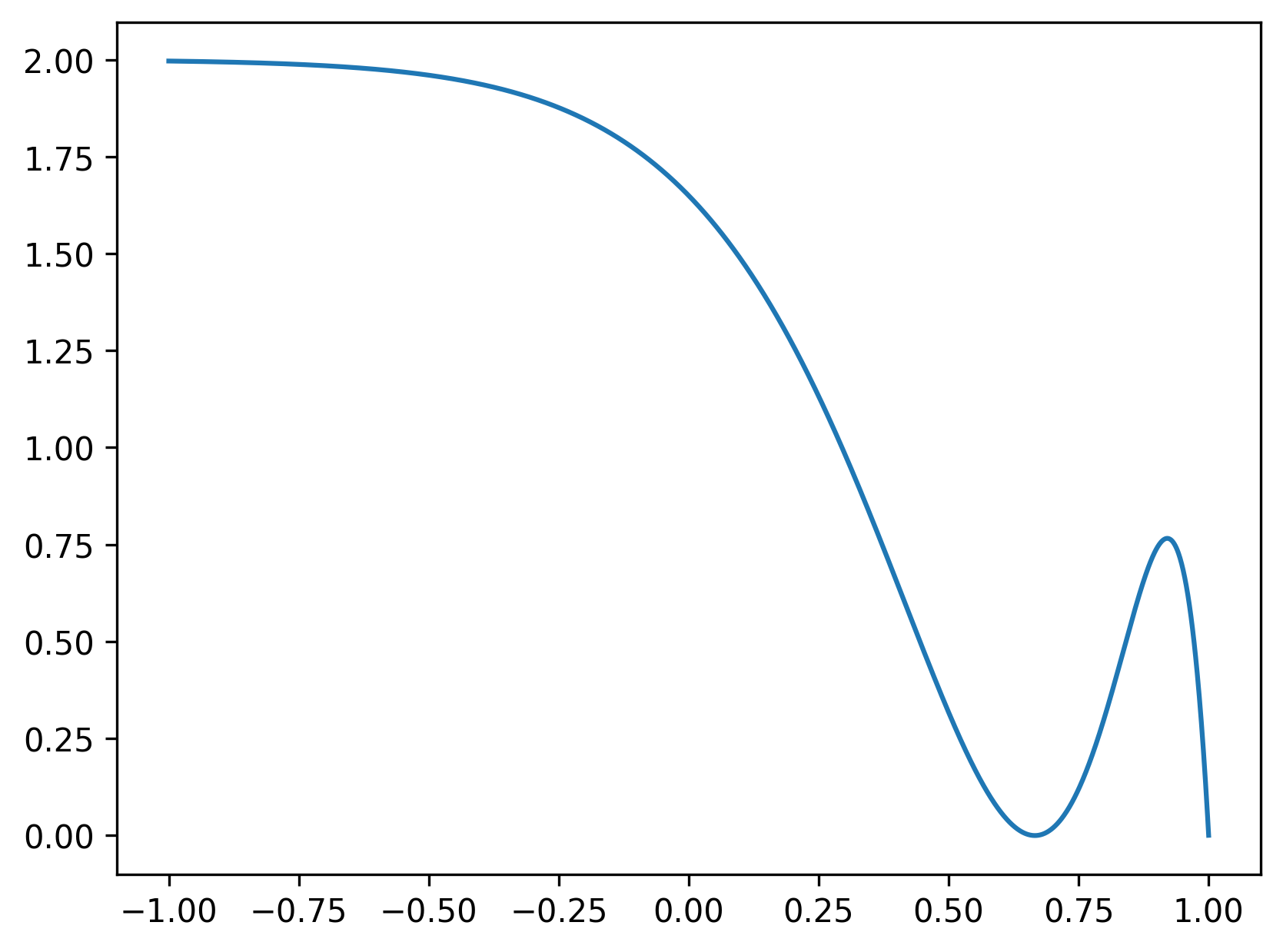}
		\subcaption{Example~\ref{ex:unconstrained-example-1}.}
		\label{fig:unconstrained-doe}
	\end{subfigure}
	\hfill
	\begin{subfigure}[t]{0.45\textwidth}
		\centering
		\includegraphics[width=\textwidth]{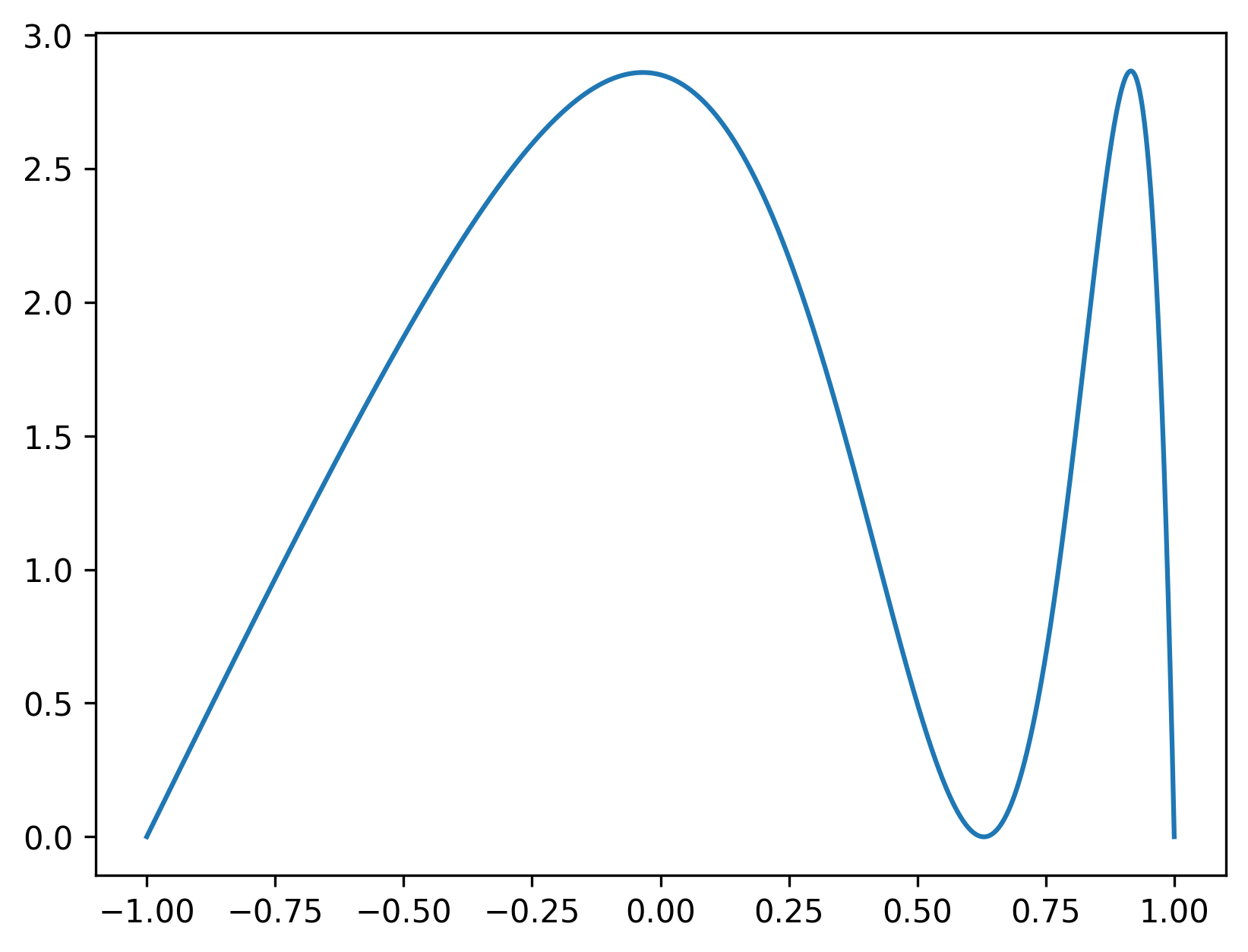}
		\subcaption{Example~\ref{ex:constrained-example-2}.}
		\label{fig:constrained-doe-ex2}
	\end{subfigure}
	\caption{Sensitivity function $x \mapsto \psi^L(\xi^*,\lambda^*,x)$ of the Lagrange function of (a) \eqref{eq:UOED(X)-1} or (b) \eqref{eq:COED(X)-example-2}, respectively, for the design $\xi^*$ and the Lagrange multiplier $\lambda^*$ from the algorithm's terminal iteration.}
	\label{fig:sensitivity-plots-at-termination}
\end{figure}

\begin{ex}\label{ex:constrained-example-1}
In this example, we consider the constrained design problem 
\begin{align}
	\label{eq:COED(X)-example-1}
	\min_{\xi \in \Xi(X)} \Psi_0(\xi) \quad 
	\text{s.t.} \quad \Psi_1(\xi) \le 0 \text{ and } \Psi_2(\xi) = 0
\end{align}
with the design criterion~\eqref{eq:D-criterion} and the affine inequality constraints and affine equality constraints defined by
\begin{align}
	\Psi_1(\xi) := \int_X g_1(x) \d\xi(x)
	\qquad \text{and} \qquad
	\Psi_2(\xi) := \int_X g_2(x) \d\xi(x),
\end{align}
where $g_1(x) := \indfct_{(0,\infty)}(x) - 0.1$ and $g_2(x) := x + 0.5$ for $x \in \X$.
It is trivial to verify Conditions~\ref{cond:objective-and-constraint-fcts} and~\ref{cond:sensi-fcts} using Lemma~\ref{lm:sufficient-conditions-for-standard-assumptions}. Condition~\ref{cond:slater-and-non-one-sidedness}, in turn, is easily verified as well. Indeed, let
\begin{align} \label{eq:X^0-constrained-example-1}
	X^0 := \{-1, 0\}
	\qquad \text{and} \qquad
	\eta^0 := 1/2 \cdot \delta_{-1} + 1/2 \cdot \delta_1.
\end{align}
It then immediately follows that $X^0 \subset X$ and that 
\begin{align} \label{eq:eta^0-strictly-feasible-constrained-example-1}
	\eta^0 \in \dom \Psi_0, \qquad
	\Psi_1(\eta^0) = -1/10 < 0, \qquad
	\Psi_2(\eta^0) = 0
\end{align}
by virtue of~\eqref{eq:det(M(eta_alpha))}. Additionally, the range
\begin{align} \label{eq:not-one-sided-constrained-example-1}
	g_2(X^0) = \{-1/2, 1/2\}
\end{align}
of $X^0$ under the equality-constraint-defining function $g_2$ is obviously not one-sided w.r.t.~$0$. In view of~\eqref{eq:eta^0-strictly-feasible-constrained-example-1} and~\eqref{eq:not-one-sided-constrained-example-1},  Condition~\ref{cond:slater-and-non-one-sidedness} is thus satisfied.
As a consequence, 
Algorithm~\ref{algo:COED-general} with the initial discretization $X^0$ from~\eqref{eq:X^0-constrained-example-1} and with arbitrary tolerances $\ul{\delta}_k, \eps > 0$ satisfying~\eqref{eq:condition-on-tolerances-for-termination} terminates after finitely many iterations at an $\eps$-optimal design for~\eqref{eq:COED(X)-example-1} (Theorem~\ref{thm:termination-thm-for-general-algo-with-eps>0}).  Specifically, with $\eps := 10^{-3}$ and $\ul{\delta}_k := 10^{-4}$, our algorithm terminates after $4$ iterations with the $\eps^*$-optimal design $\xi^*$ from Table~\ref{tab:exponential-growth-examples} with $\eps^* := 3.4068 \cdot 10^{-5} < \eps$. It consists of $4$ design points, while 
\begin{align}
	\frac{d_\theta(d_\theta + 1)}{2} + |I_{\mathrm{ineq},2}| + |\Ieq| + 1 = 6
\end{align}
is the general upper bound on the support cardinality of $\eps$-optimal designs  for~\eqref{eq:COED(X)-example-1} from Corollary~\ref{cor:opt-design-with-finite-support}. 
Figure~\ref{fig:sensitivity-plots-over-iterations} shows the sensitivity function $X \ni x \mapsto \psi^L(\xi^k,\lambda^k,x)$ for all four iterations of our algorithm along with the corresponding minimizers $x^k$ (red crosses). As can be seen, the sensitivity function (when extended from $X$ to the continuous set $\X$) is discontinuous at $x = 0$ provided that
the corresponding Lagrange multiplier $\lambda_1^k > 0$ is strictly positive. Indeed, this is because
\begin{align}
	\psi^L(\xi^k,\lambda^k,x) = \psi_0(\xi^k,x) + \lambda_1^k \cdot (g_1(x)-\Psi_1(\xi^k)) + \lambda_2^k \cdot (g_2(x)-\Psi_2(\xi^k))
	\qquad (x \in \X)
\end{align}
by Lemma~\ref{lm:sufficient-conditions-for-standard-assumptions} and because $g_1$ is discontinuous at $x = 0$, while $\psi_0$ and $g_2$ are continuous. 
$\blacktriangleleft$
\end{ex}
	
\begin{figure}[h]\captionsetup[subfigure]{font=normalsize}
	\centering
	\begin{subfigure}[t]{0.45\textwidth}
		\centering
		\includegraphics[width=\textwidth]{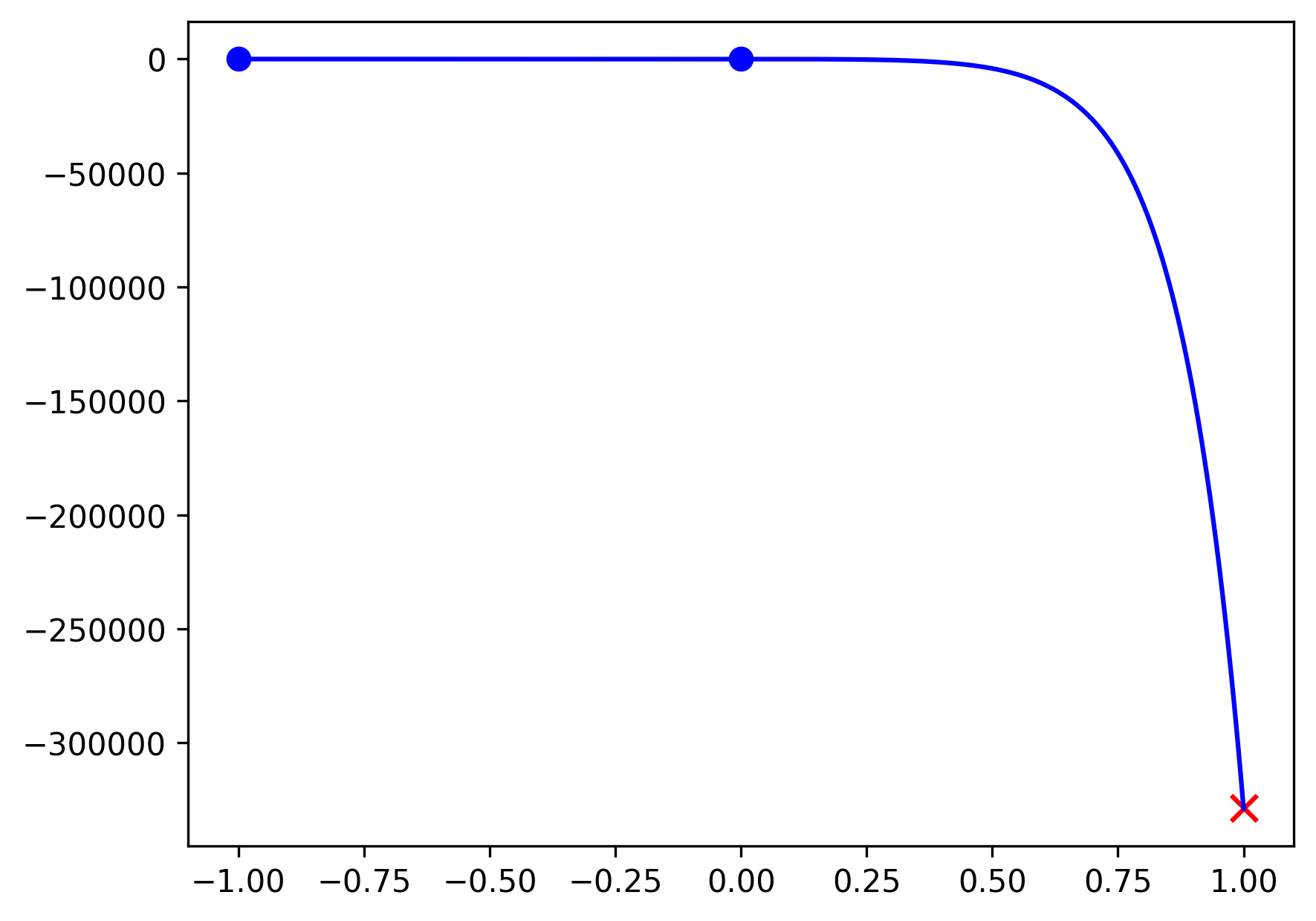}
		\subcaption{Iteration 1.}
		\label{fig:constrained-doe-iter-1}
	\end{subfigure}
	\hfill
	\begin{subfigure}[t]{0.45\textwidth}
		\centering
		\includegraphics[width=\textwidth]{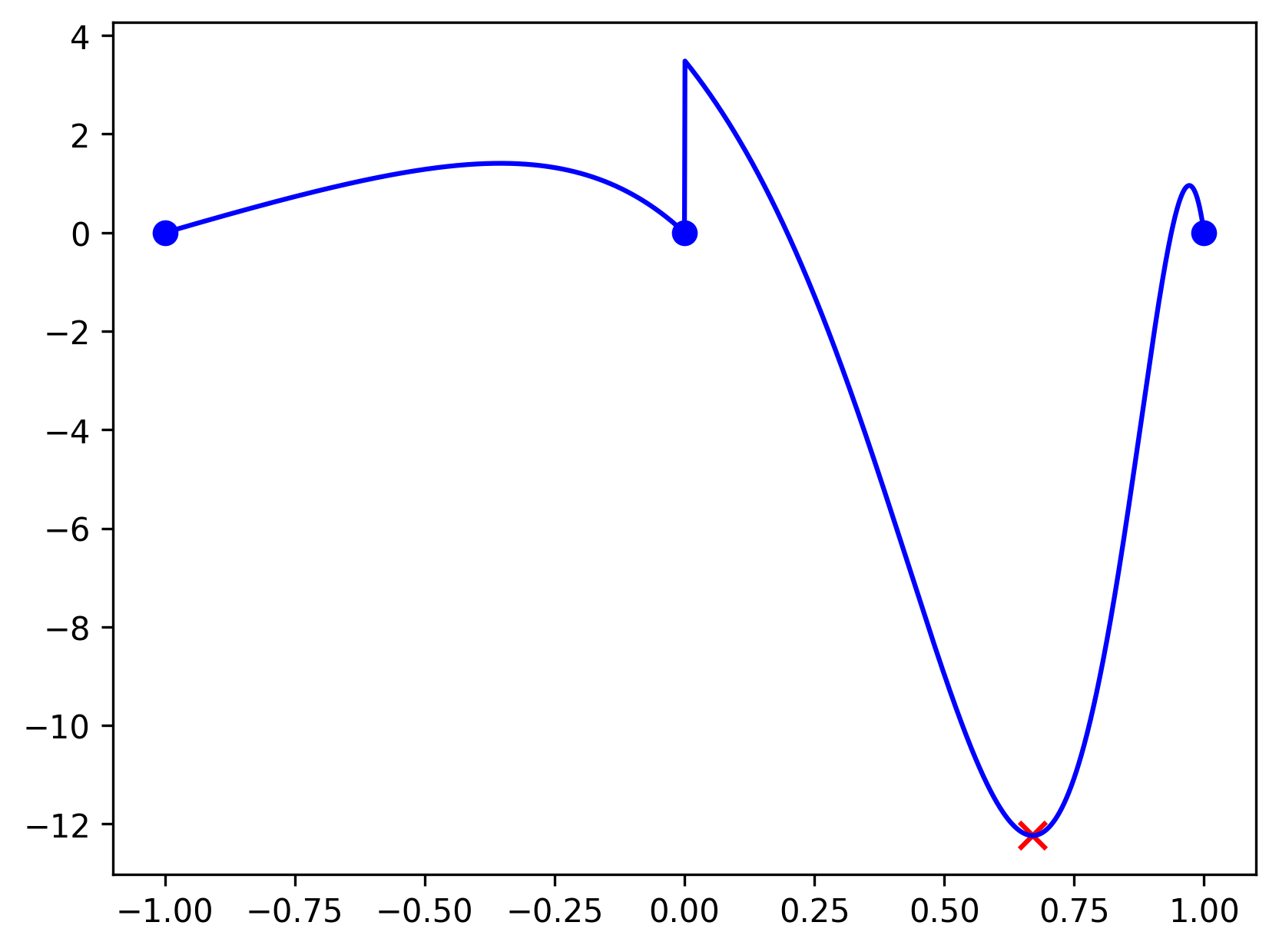}
		\subcaption{Iteration 2.}
		\label{fig:constrained-doe-iter-2}
	\end{subfigure}
	
	\begin{subfigure}[t]{0.45\textwidth}
		\centering
		\includegraphics[width=\textwidth]{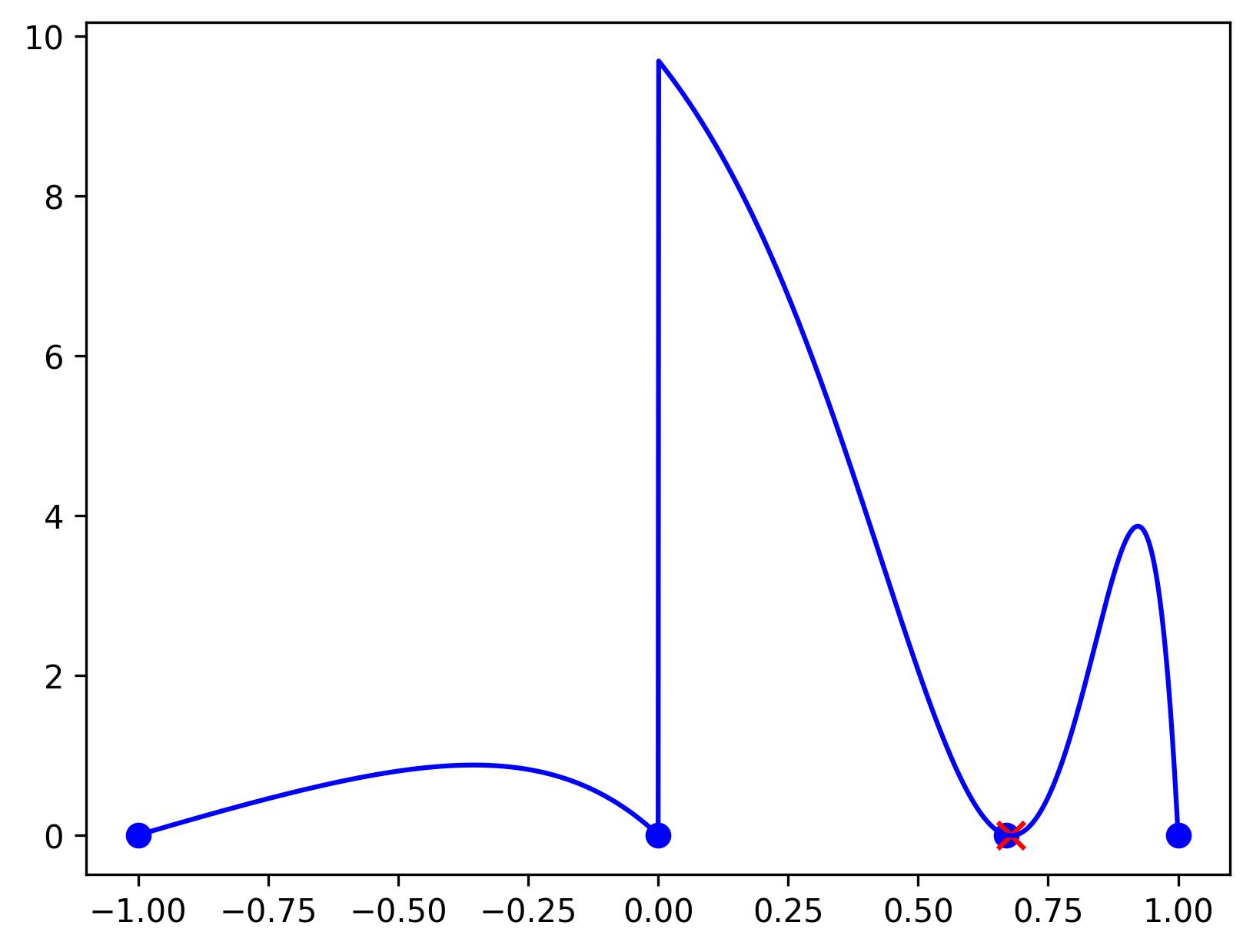}
		\subcaption{Iteration 3.}
		\label{fig:constrained-doe-iter-3}
	\end{subfigure}
	\hfill
	\begin{subfigure}[t]{0.45\textwidth}
		\centering
		\includegraphics[width=\textwidth]{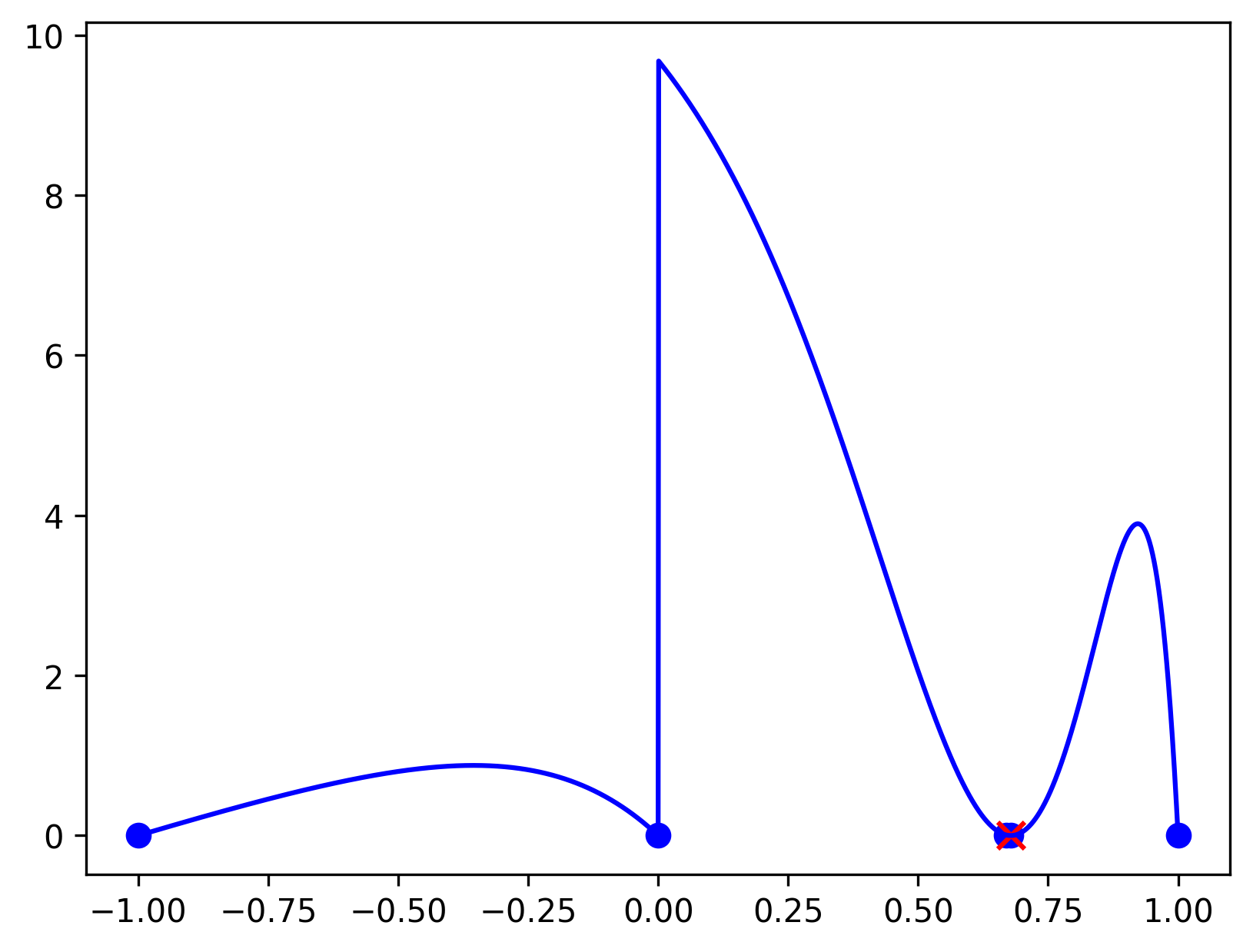}
		\subcaption{Iteration 4.}
		\label{fig:constrained-doe-iter-4}
	\end{subfigure}
	\caption{Sensitivity function $x \mapsto \psi^L(\xi^k,  \lambda^k, x)$ of the Lagrange function of~\eqref{eq:COED(X)-example-1} for the designs $\xi^k$ and the Lagrange multipliers $\lambda^k$ computed in the algorithm's iterations. The discretization set $X^k$ is indicated by the blue points and the sensitivity function's minimizer $x^k$ is marked by a red cross.}
	\label{fig:sensitivity-plots-over-iterations}
\end{figure}

In the next example, 
we also consider a non-affine inequality constraint. Specifically, we require that the value of a different design criterion be below a certain upper bound. We use the A-criterion here, which is defined by
\begin{align} \label{eq:A-criterion}
	\Phi_1(M(\xi)) 
	:= 
	\begin{cases} 
		\tr\left( M(\xi)^{-1} \right), \quad M(\xi)\ \text{is non-singular} \\
		\infty, \quad \text{else}
	\end{cases}.
\end{align}

\begin{ex}\label{ex:constrained-example-2}
In this example, we consider the constrained design problem
\begin{align}
	\label{eq:COED(X)-example-2}
	\min_{\xi \in \Xi(X)} \Psi_0(\xi) \quad 
	\text{s.t.} \quad \Psi_1(\xi) \le 0 \text{ and }
	\Psi_2(\xi) = 0
\end{align}
with the design criterion~\eqref{eq:D-criterion} and with the nonlinear inequality constraint and the linear equality constraint defined by
\begin{align}
	\Psi_1(\xi) := \Phi_1(M(\xi)) - 5
	\qquad \text{and} \qquad
	\Psi_2(\xi) := \int_X g_2(x) \d\xi(x),
\end{align}
where $\Phi_1$ is the A-criterion~\eqref{eq:A-criterion} and $g_2(x) := x + 0.5$ for $x \in \X$ as in the previous example.
It is trivial to verify Conditions~\ref{cond:objective-and-constraint-fcts} and~\ref{cond:sensi-fcts} using Lemma~\ref{lm:sufficient-conditions-for-standard-assumptions}. In order to verify Condition~\ref{cond:slater-and-non-one-sidedness}, in turn, we have to specify a suitable initial discretization $X^0$ and of a suitable design $\eta^0 \in \Xi(X^0)$ on $X^0$. We first try the discretization $X^0 := \{-1, 0\}$ we already used in the previous examples. Since all designs on this discretization are of the form~\eqref{eq:eta_alpha}, it follows from~\eqref{eq:tr(M(eta_alpha)-inverse)} that $\Psi_1(\xi) \ge 2 + \e^6 > 5$ for every $\xi \in \Xi(X^0)$ and therefore $\CD(X^0)$ is infeasible. In particular, $\CD(X^0)$ does not have any strictly feasible point and thus $X^0 := \{-1, 0\}$ cannot be used to verify Condition~\ref{cond:slater-and-non-one-sidedness}. We therefore try another initial discretization now, namely
\begin{align}
	\label{eq:X^0-constrained-example-2}
	X^0 := \{-1,0,1\}
	\qquad \text{and} \qquad
	\eta^0 := \eta_{\alpha,\beta} := 21/40 \cdot \delta_{-1} + 1/40 \cdot \delta_1 + 18/40 \cdot \delta_0
\end{align}
It then immediately follows that $X^0 \subset X$ and that 
\begin{align} \label{eq:eta^0-strictly-feasible-constrained-example-2}
	\eta^0 \in \dom \Psi_0 
	\qquad \text{and} \qquad
	\Psi_2(\eta^0) = 0
\end{align}
by virtue of~\eqref{eq:det(M(eta_alpha,beta))}.  Additionally, it follows by~\eqref{eq:tr(M(eta_alpha,beta)-inverse)} that
\begin{align}
	\tr(M(\eta^0)^{-1}) 
	\le \frac{2\alpha \e^{-6}}{\beta \gamma \e^6} + \frac{2}{\gamma} + \frac{1}{\beta \e^6}
	= \frac{280/3}{\e^{12}} + \frac{40}{9} + \frac{40}{\e^6} < 4.5 + \frac{100}{\e^{12}} + \frac{100}{\e^6} < 5
\end{align}
because $\e^6 > 400$. And finally, the range
\begin{align} \label{eq:not-one-sided-constrained-example-2}
	g_2(X^0) = \{-1/2, 1/2, 3/2\}
\end{align}
of $X^0$ under the equality-constraint-defining function $g_2$ is obviously not one-sided w.r.t.~$0$. In view of~\eqref{eq:eta^0-strictly-feasible-constrained-example-2}-\eqref{eq:not-one-sided-constrained-example-2},  Condition~\ref{cond:slater-and-non-one-sidedness} is thus satisfied.
As a consequence, 
Algorithm~\ref{algo:COED-general} with the initial discretization $X^0$ from~\eqref{eq:X^0-constrained-example-2} and with arbitrary tolerances $\ul{\delta}_k, \eps > 0$ satisfying~\eqref{eq:condition-on-tolerances-for-termination} terminates after finitely many iterations at an $\eps$-optimal design for~\eqref{eq:COED(X)-example-1} (Theorem~\ref{thm:termination-thm-for-general-algo-with-eps>0}).  Specifically, with $\eps := 10^{-3}$ and $\ul{\delta}_k := 10^{-4}$, our algorithm terminates after $3$ iterations with the $\eps^*$-optimal design $\xi^*$ from Table~\ref{tab:exponential-growth-examples} 
with $\eps^* := 4.8579 \cdot 10^{-4} < \eps$. It consists of $3$ design points, while 
\begin{align}
	\frac{d_\theta(d_\theta + 1)}{2} + |\Ieq| + 1 = 5
\end{align}
is the general upper bound on the support cardinality of $\eps$-optimal designs  for~\eqref{eq:COED(X)-example-2} from Corollary~\ref{cor:opt-design-with-finite-support}.  Figure~\ref{fig:sensitivity-plots-at-termination} (b) shows the sensitivity function $X \ni x \mapsto \psi^L(\xi^*,\lambda^*,x)$ for the saddle point $(\xi^*, \lambda^*)$ from the terminal iteration of the algorithm. In particular, this figure shows that the terminal sensitivity function's minimum $\min_{x \in X} \psi^L(\xi^*,\lambda^*,x) = -\eps^*$ is basically $0$. 
$\blacktriangleleft$ 
\end{ex}

\begin{table}[H]
	\setlength{\tabcolsep}{10pt} 
	\renewcommand{\arraystretch}{1.5}
	\caption{Computed $\eps$-optimal designs $\xi^* = \sum_{x\in\supp\xi^*} w_x \delta_x$ and algorithm performance for the experimental design problems with the exponential growth  model~\eqref{eq:exponential-model-function} (Examples~\ref{ex:unconstrained-example-1}, \ref{ex:constrained-example-1} and \ref{ex:constrained-example-2}).}
	\label{tab:exponential-growth-examples}
	\centering
	\begin{tabular}{p{4.5cm}p{3cm}p{3cm}p{3cm}}
		\hline
		& Example~\ref{ex:unconstrained-example-1} & Example~\ref{ex:constrained-example-1} & Example~\ref{ex:constrained-example-2} \\ \hline
		
		\raggedright Computed design $\xi^*$ (pairs $(x_i,w_i)$ of support points $x_i$ and weights $w_i$) & $(0.672, 0.5)$,\newline $(1.0, 0.5)$ & $(-1.0, 0.5846)$,\newline $(0.0, 0.3154)$ \newline $(0.679, 0.0285)$ \newline $ (1.0, 0.0715) $ & $(-1.0, 0.7214)$,\newline $(0.626, 0.1532)$, \newline $ (1.0, 0.1255) $\\
		\raggedright Criterion value $\Psi_0(\xi^*)$ & $-6.4162$ & $-2.6738$ & $-3.8456$ \\
		\raggedright Constraint values \newline $(\Psi_1(\xi^*), \Psi_2(\xi^*) )$ & $-$ & $(0.0, 0.0)$ & $(-2.6412, 0.0)$  \\
		\raggedright Approximation error bound $\eps^*$ & $6.0104 \cdot 10^{-4}$  & $3.4068\cdot 10^{-5} $ &  $4.8579\cdot 10^{-4} $ \\
		\raggedright Iterations & 2 & 4 & 3\\
		\raggedright Support points & 2 & 4 & 3 \\
		\raggedright Computation time (in $\SI{}{\second}$) & 0.1244 & 0.2399 & 0.7182 \\ \hline
	\end{tabular}
\end{table}

\subsection{Applications with a reaction kinetics  model}

In this section, we consider experimental design problems for a reaction kinetics model~\cite{UcBo04, UcBo05} from chemical engineering. It describes a chain of chemical reactions 
\begin{equation}\label{eq:ucinski-chem-representation}
	A \rightleftharpoons B \rightarrow C
\end{equation}
involving three components $A, B, C$, where $B$ is the target product and $C$ is a waste product. Accordingly, we are interested in experimental designs leading to a sufficiently high target product return after a sufficiently short time span. 
As in \cite{UcBo04}, we assume that the forward reactions in~\eqref{eq:ucinski-chem-representation} 
have reaction order $2$, that the backward reaction in~\eqref{eq:ucinski-chem-representation} has reaction order $1$, and that the rate constants $k_i$ of all three kinetic reactions  depend on the temperature $T$ according to the Arrhenius relation
\begin{align}
	\label{eq:arrhenius-relation}
	k_i(T) := \alpha_i \exp\left(-E_i/(RT)\right),
\end{align}
where $R := \SI{ 1.986}{\calorie \cdot \mole^{-1} \kelvin^{-1}}$ is the universal gas constant and $\alpha_i$ and $E_i$ denote the pre-exponential factor and the activation energy of the $i$th reaction, respectively. 
We denote the molar composition of the reactive mixture by 
\begin{align}
	s := (s_1,s_2,s_3) := (a,b,c),
\end{align}
that is, $a, b$ and $c$ represent the molar fractions of the components $A, B$ and $C$. 
As a consequence of the above model assumptions, the evolution of the reactive system can be described by a system of ordinary differential equations of the form
\begin{align}
	\label{eq:ucinski-ivp}
	\dot{s}(t) = g(s(t),T,\theta)
	\qquad \text{and} \qquad
	s(0) = s_0
\end{align}
where the right-hand side $g: \St \times \T \times \Theta \to \R^3$ is defined by
\begin{align}
	g_1(s,T,\theta) &:= -k_1(T,\theta) s_1^2 + k_3(T,\theta) s_2\\
	g_2(s,T,\theta) &:= k_1(T,\theta) s_1^2 - k_2(T,\theta) s_2^2 - k_3(T,\theta) s_2\\
	g_3(s,T,\theta) &:= k_2(T,\theta) s_2^2
\end{align}
and where $k_i(T,\theta) := k_i(T)$ is the model from~\eqref{eq:arrhenius-relation} and the model parameters $\theta$ are given by
\begin{align}
	\theta := (\alpha_1,\alpha_2,\alpha_3,E_1,E_2,E_3)
\end{align}
the pre-exponential factors and the activation energies of the three reactions. 
As the input quantities defining individual experiments we take 
\begin{align}
	\label{eq:ucinski-state-variables}
	x := (\tm, s_0, T) \in \X := [0,\tfin] \times \St_0 \times \T \subset \R^5,
\end{align}
that is, the measurement time $\tm$ (measured in $\SI{}{\hour}$), the initial molar composition $s_0 = (a_0,b_0,c_0)$ (measured in $\SI{}{\mole / \mole}$), and the temperature $T$ (measured in $\SI{}{\kelvin}$). We take the final time to be $\tfin := 10$ and the sets of possible initial compositions and of possible temperatures to be
\begin{gather}
	\St_0 := \big\{ (a_0,b_0,c_0) \in [0.5, 1.0] \times [0.1, 0.7]^2: a_0 + b_0 + c_0 = 1 \big\},\\
	\T := [300, 700].
\end{gather} 
What we are interested in here is the prediction of the composition $s$ of the mixture at the measurement time $\tm$ as a function of the aforementioned input quantities $x$ and model paremeters $\theta$. In other words, the output quantity of interest is $y := s$ and the prediction model of interest is the function $f: \X \times \Theta \to \R^3$ given by
\begin{align}
	\label{eq:reaction-kinetics-model}
	f(x,\theta) := s(\tm,s_0,T,\theta) = s(x,\theta),
\end{align}
where $s(\cdot,s_0,T,\theta): [0,\tfin] \to \R^3$ denotes the solution of the initial-value problem~\eqref{eq:ucinski-ivp} for given initial composition $s_0$, temperature $T$, and model parameters $\theta$. 
As already pointed above, we take the design space $X$ to be a large finite subset of $\X$, namely the uniform grid
\begin{align} \label{eq:chemical-engineering-design-space}
	X := \big\{(\tm,s_0,T) 
	&\in \{1, 2, 3, \ldots, 10\} 
	\times \{0.5, 0.51, 0.52, \ldots, 1.0 \} 
	\times \{0.1, 0.11, 0.12, \ldots, 0.7 \}^2  \notag \\
	&\quad \times \{300, 301, 302, \ldots, 700\}: a_0 + b_0 + c_0 = 1 \big\} \subset \X.
\end{align}
consisting of $1\,988\,960$ grid points in $\X$. Also, the reference parameter value $\ol{\theta}$ and the covariance matrix $\varsigma(x)$ of the measurement errors we define in the same way as~\cite{UcBo04}, namely
\begin{gather}
	\ol{\theta} := \left(\ol{\alpha}_1, \ol{\alpha}_2, \ol{\alpha}_3, \ol{E}_1, \ol{E}_2, \ol{E}_3\right) := (0.7, 0.2, 0.1, 1000, 1000, 1000),\\
	\varsigma(x) := \frac{1}{100} \begin{pmatrix}
		s_1(x,\ol{\theta}) & 0 & 0 \\
		0 & s_2(x,\ol{\theta}) & 0 \\
		0 & 0 & s_3(x,\ol{\theta})
	\end{pmatrix}.
\end{gather}
In contrast to the exponential growth model~\eqref{eq:exponential-model-function}, the reaction kinetics model~\eqref{eq:reaction-kinetics-model} is not explicitly defined by a closed-form expression but implicitly defined by the solution $s(\cdot,s_0,T,\theta)$ of the initial-value problem~\eqref{eq:ucinski-ivp}. Accordingly, the Jacobians $D_\theta f(x,\ol{\theta})$ and the one-point information matrices $m_f(x,\ol{\theta})$ from~\eqref{eq:one-point-information-matrix} are not explicitly defined either. In order to compute them, we exploit the well-known fact that the differentiated solution $t \mapsto D_\theta s(t,s_0,T,\ol{\theta}) \in \R^{3 \times 6}$ is the solution of the linearized initial-value problem
\begin{align}
	\label{eq:ucinski-differentiated-ivp}
	\dot{s}_\theta(t) = D_s g\big(s(t,s_0,T,\ol{\theta}), T, \ol{\theta}\big) s_\theta(t) + D_\theta g\big(s(t,s_0,T,\ol{\theta}), T, \ol{\theta}\big)
	\qquad \text{and} \qquad
	s_\theta(0) = 0
\end{align}
(Corollary 9.3 of~\cite{Amann}), whose right-hand side depends on the solution of the initial-value problem~\eqref{eq:ucinski-ivp}. Instead of solving~\eqref{eq:ucinski-ivp} and~\eqref{eq:ucinski-differentiated-ivp} sequentially, we solve them simultaneously, that is, we solve the initial-value problem
\begin{gather}
	\label{eq:ucinski-overall-ivp}
	\begin{pmatrix}
		\dot{s}(t)\\
		\dot{s}_\theta(t)
	\end{pmatrix}
	=
	\begin{pmatrix}
		g(s(t),T,\ol{\theta})\\
		D_s g\big(s(t), T, \ol{\theta}\big) s_\theta(t) + D_\theta g\big(s(t), T, \ol{\theta}\big)
	\end{pmatrix}
	\quad \text{and} \quad
	\begin{pmatrix}
		s(0)\\
		s_\theta(0)
	\end{pmatrix}
	=
	\begin{pmatrix}
		s_0\\
		0
	\end{pmatrix}.
\end{gather} 
In order to do so, we use the solution method $\mathrm{solve\_ivp}$ 
from the scipy.integrate package~\cite{SciPy}. With the solution $t \mapsto (s(t),s_\theta(t))$ of~\eqref{eq:ucinski-overall-ivp} at hand, we then obtain the required Jacobians and one-point information matrices as $D_\theta f(x,\ol{\theta}) = s_\theta(\tm)$ and 
\begin{align}
	m_f(x,\ol{\theta}) = 100 \cdot s_\theta(\tm)^\top \diag(s_1(\tm), s_2(\tm), s_3(\tm))^{-1} s_\theta(\tm)
\end{align}
for all design points $x \in X$. 
As in the examples with the exponential growth model above, the verification of Conditions~\ref{cond:objective-and-constraint-fcts} and~\ref{cond:sensi-fcts} will again be achieved with the help of Lemma~\ref{lm:sufficient-conditions-for-standard-assumptions}. Condition~\ref{cond:slater-and-non-one-sidedness}, in turn, will be verified using an initial discretization 
\begin{align}
	\label{eq:ucinski-initial-discretization}
	X^0 \subset X_\constr := \big\{x \in X: x_1 < 5 \text{ and } \ret(x) > 4 \big\},
\end{align}
where $x_1 = \tm$ is the measurement time as in~\eqref{eq:ucinski-state-variables} and where 
\begin{align}
	\ret(x) := f_2(x,\ol{\theta})/x_3 = s_2(x,\ol{\theta})/s_{02} = b(\tm,s_0,T,\ol{\theta})/b_0
\end{align}
is the return-on-invest of the target product $B$ at time $\tm$. So, in words, $X_\constr$ is the set of those candidate experiments from $X$ that are taken before $5$ hours have elapsed and result in a target-product return of more than $4$. As it turns out, $X_\constr$ consists of $852$ points and we choose $X^0$ as a $20$-element subset of $X_\constr$ for which the information matrix $\frac{1}{20} \sum_{x \in X^0} m_f(x,\ol{\theta})$ is invertible. We further choose $\eta^0$ as the design with uniformly distributed weights on $X^0$ so that
\begin{align}
	\label{eq:ucinski-strictly-feasible-design}
	\eta^0 := \frac{1}{20} \sum_{x \in X^0} \delta_x
	\qquad \text{and} \qquad
	\Psi_0(\eta^0) < \infty.
\end{align}
With our specific choice of $X^0$, we have $\Psi_0(\eta^0) \approx 43.31$. 

\begin{table}
	\begin{center}
		\renewcommand{\arraystretch}{1.2}
		\caption{Support points $x$ and weights $w_x$ of the computed $\eps$-optimal design $\xi^* = \sum_{x \in \supp\xi^*} w_x \delta_x$ for the unconstrained design  problem~\eqref{eq:unconstrained-oed}, along with the corresponding mole fraction predictions $f_i(x,\ol{\theta}) = s_i(x,\ol{\theta})$ and product return predictions $\ret(x)$.}
		\label{tab:unconstrained-optimal-experimental-design}
		\begin{tabular}{cccccccc} 
			\hline
			$x = (\tm, a_0, b_0, c_0, T)$ & $w_x$ & & $f_1(x,\ol{\theta})$ & $f_2(x,\ol{\theta})$ & $f_3(x,\ol{\theta})$ & & $\ret(x)$ \\
			\hline
			$(5, 0.8, 0.1, 0.1, 300)$ & $0.1290$ & & 0.542 & 0.346 & 0.112 & & 3.4563\\ 
			$(10, 0.8, 0.1, 0.1, 300)$ & $0.0581$ & & 0.429 & 0.430 & 0.141 & & 4.2998\\ 
			$(10, 0.5, 0.4, 0.1, 300)$ & $0.3129$ & & 0.357 & 0.468 & 0.175 & & 1.1691\\ 
			$(2, 0.8, 0.1, 0.1, 700)$ & $0.0217$ & & 0.535 & 0.352 & 0.113 & & 3.5151\\ 
			$(10, 0.8, 0.1, 0.1, 700)$ & $0.2722$ & & 0.302 & 0.436 & 0.262 & & 4.3586\\ 
			$(10, 0.5, 0.4, 0.1, 700)$ & $0.2061$ & & 0.284 & 0.420 & 0.296 & & 1.0500\\ 
			\hline
		\end{tabular}
	\end{center}
\end{table}

\begin{ex}\label{ex:unconstrained-example-2}	
In this example, we consider the unconstrained design problem 
\begin{equation}
	\label{eq:unconstrained-oed}
	\min_{\xi \in \Xi(X)} \Psi_0(\xi)
\end{equation}
with the design criterion~\eqref{eq:D-criterion} for reference. 
It is trivial to verify Conditions~\ref{cond:objective-and-constraint-fcts} and~\ref{cond:sensi-fcts} using Lemma~\ref{lm:sufficient-conditions-for-standard-assumptions}. In view of~\eqref{eq:ucinski-strictly-feasible-design}, Condition~\ref{cond:slater-and-non-one-sidedness} is satisfied as well.
As a consequence, 
Algorithm~\ref{algo:COED-general} with the initial discretization $X^0$ from~\eqref{eq:ucinski-initial-discretization} and with arbitrary tolerances $\ul{\delta}_k, \eps > 0$ satisfying~\eqref{eq:condition-on-tolerances-for-termination} terminates after finitely many iterations at an $\eps$-optimal design for~\eqref{eq:unconstrained-oed} (Theorem~\ref{thm:termination-thm-for-general-algo-with-eps>0}).  Specifically, with $\eps := 10^{-3}$ and $\ul{\delta}_k := 10^{-4}$, our algorithm terminates after $8$ iterations with the $\eps^*$-optimal design $\xi^*$ from Table~\ref{tab:unconstrained-optimal-experimental-design} 
with $\eps^* := 4.8579 \cdot 10^{-4} < \eps$. It consists of $6$ design points, which is well below the general upper bound 
\begin{align}
	\frac{d_\theta(d_\theta + 1)}{2} +1 = 22
\end{align}
on the support cardinality of $\eps$-optimal designs for~\eqref{eq:unconstrained-oed} from Corollary~\ref{cor:opt-design-with-finite-support}. See  Table~\ref{tab:performance-constrained-oed-2}. 
	$\blacktriangleleft$
\end{ex}

In the next example, we add performance and cost constraints on the designs stipulating that the product return when averaged over all design points and the elapsed time averaged over all design points be above $4$ or below $5$, respectively. 

\begin{ex}\label{ex:constrained-example-3}
In this example, we consider the constrained design problem
\begin{align}
	\label{eq:constrained-example}
	\min_{\xi \in \Xi(X)} \Psi_0(\xi) \quad 
	\text{s.t.} \quad \Psi_1(\xi) \le 0 \text{ and }
	\Psi_2(\xi) \le 0
\end{align}
with the design criterion~\eqref{eq:D-criterion} and with the affine inequality constraints  defined by
\begin{align}
	\Psi_1(\xi) & := \int_X \ 4 - \ret(x) \d\xi( x), \\
	\Psi_2(\xi) & := \int_X x_1 - 5 \d\xi(x).
\end{align}
Clearly, the constraints in~\eqref{eq:constrained-example} represent the aforementioned conditions on the average product return and the average measurement time. 
It is trivial to verify Conditions~\ref{cond:objective-and-constraint-fcts} and~\ref{cond:sensi-fcts} using Lemma~\ref{lm:sufficient-conditions-for-standard-assumptions}. In view of~\eqref{eq:ucinski-initial-discretization} and~\eqref{eq:ucinski-strictly-feasible-design}, Condition~\ref{cond:slater-and-non-one-sidedness} is also satisfied. 
As a consequence, 
Algorithm~\ref{algo:COED-general} with the initial discretization $X^0$ from~\eqref{eq:ucinski-initial-discretization} and with arbitrary tolerances $\ul{\delta}_k, \eps > 0$ satisfying~\eqref{eq:condition-on-tolerances-for-termination} terminates after finitely many iterations at an $\eps$-optimal design for~\eqref{eq:unconstrained-oed} (Theorem~\ref{thm:termination-thm-for-general-algo-with-eps>0}).  Specifically, with $\eps := 10^{-3}$ and $\ul{\delta}_k := 10^{-4}$, our algorithm terminates after $9$ iterations with the $\eps^*$-optimal design $\xi^*$ from Table~\ref{tab:constrained-optimal-experimental-design} 
with $\eps^* := 4.8579 \cdot 10^{-4} < \eps$. It consists of $6$ design points, which is well below the general upper bound 
\begin{align}
	\frac{d_\theta(d_\theta + 1)}{2} + |I_{\mathrm{ineq},2}| + 1 = 24
\end{align}
on the support cardinality of $\eps$-optimal designs for~\eqref{eq:constrained-example} from Corollary~\ref{cor:opt-design-with-finite-support}. See  Table~\ref{tab:performance-constrained-oed-2}.
$\blacktriangleleft$
\end{ex}

\begin{table}
\begin{center}
	\renewcommand{\arraystretch}{1.2}
	\caption{Support points $x$ and weights $w_x$ of the computed $\eps$-optimal design $\xi^* = \sum_{x \in \supp\xi^*} w_x \delta_x$ for the constrained design  problem~\eqref{eq:constrained-example}, along with the corresponding mole fraction predictions $f_i(x,\ol{\theta}) = s_i(x,\ol{\theta})$ and product return predictions $\ret(x)$.}
	\label{tab:constrained-optimal-experimental-design}
	\begin{tabular}{cccccccc} 
		\hline
		$x = (\tm, a_0, b_0, c_0, T)$ & $w_x$ & & $f_1(x,\ol{\theta})$ & $f_2(x,\ol{\theta})$ & $f_3(x,\ol{\theta})$ & & $\ret(x)$ \\
		\hline
		$(4, 0.8, 0.1, 0.1, 300)$ & $0.0807$ & & 0.577 & 0.315 & 0.108 & & 3.1503 \\ 
		$(10, 0.8, 0.1, 0.1, 300)$ & $0.0606$ & & 0.429 & 0.430 & 0.141 & & 4.2998 \\ 
		$(10, 0.5, 0.4, 0.1, 300)$ & $0.0458$ & & 0.357 & 0.468 & 0.175 & & 1.1691 \\ 
		$(3, 0.8, 0.1, 0.1, 700)$ & $0.3281$ & & 0.469 & 0.404 & 0.127 & & 4.0421 \\ 
		$(4, 0.8, 0.1, 0.1, 700)$ & $0.3699$ & & 0.422 & 0.434 & 0.144 & & 4.3374 \\ 
		$(10, 0.8, 0.1, 0.1, 700)$ & $0.1150$ & & 0.302 & 0.436 & 0.262 & & 4.3586 \\ 
		\hline
	\end{tabular}
\end{center}
\end{table}

\begin{table}[H]
\begin{center}
	\renewcommand{\arraystretch}{1.4}
	\caption{Algorithm performance for the experimental design problems for the reaction kinetics model~\eqref{eq:reaction-kinetics-model}.}
	\label{tab:performance-constrained-oed-2}
		\begin{tabular}{p{5.5cm}p{0.1cm}p{3cm}p{3cm}} 
			\hline
			& & Example~\ref{ex:unconstrained-example-2} & Example~\ref{ex:constrained-example-3}   \\
			\hline
			Criterion value $\Psi_0(\xi^*)$& & $22.8570$& $\phantom{-}27.7649$   \\
			Constraint value $\Psi_1(\xi^*)$ & & $1.4595$ & $-6.9389\cdot 10^{-18}$ \\
			Constraint value $\Psi_2(\xi^*)$ && $4.1813$ & $\phantom{-}0.0$  \\
			\raggedright Approximation error bound &  & $2.6459\cdot 10^{-6}$ & $\phantom{-}3.3856\cdot 10^{-5}$  \\
			\raggedright Iterations & & $8$ & $\phantom{-}9$ \\
			Support points & & $6$ & $\phantom{-}6$  \\
			Computation time (in $\SI{}{\second}$) &  & $301.01$ s & $\phantom{-}302.72$ s  \\
			\hline
		\end{tabular}
\end{center}
\end{table}


\section*{Acknowledgments}

We gratefully acknowledge funding from the Deutsche Forschungsgemeinschaft (DFG, German Research Foundation) through the research unit ``FOR 5359: Deep Learning on Sparse Chemical Process Data''.

\begin{small}

\end{small}


\begin{thebibliography}{}

\bibitem{Amann} H. Amann: Ordinary differential equations. An introduction to nonlinear analysis. DeGruyter (1990)

\bibitem{AmannEscher} H. Amann, J. Escher: Analysis I, II, III. Birkh\"auser (2005, 2008, 2009)

\bibitem{AtFe88} A.C. Atkinson, V.V. Fedorov: \emph{The optimum design of experiments in the presence of uncontrolled variability and prior information.} In: Y. Dodge, V.V. Fedorov, H. Wynn (eds.): Optimal design and analysis of experiments. North-Holland (1988), 327-344

\bibitem{AtDoTo} A.C. Atkinson, A.N. Donev, R.D. Tobias: Optimum experimental design, with SAS. Oxford University Press (2007)

\bibitem{At73} C. Atwood: \emph{Sequences converging to D-optimal designs of experiments.} Ann. Statist.~\textbf{1} (1973), 342-352

\bibitem{Bhatia} R. Bhatia: Matrix analysis. Springer (1997)

\bibitem{BlFa76} J.W. Blankenship, J.E. Falk: \emph{Infinitely constrained optimization problems.} J. Optim. Th. Appl.~\textbf{19} (1976), 261-281

\bibitem{Bo86} D. B\"ohning: \emph{A vertex-exchange method in D-optimal design theory.} Metrika~\textbf{33} (1986), 337-347

\bibitem{BuSc24} M. Bubel, J. Schmid, V. Kozachynskyi, E. Esche, M. Bortz: \emph{Sequential optimal experimental design for vapor-liquid equilibrium modeling.} Chem. Eng. Sci.~300 (2024), 120566

\bibitem{CaCu68} M.D. Canon, C.D. Cullum: \emph{A tight upper bound on the rate of convergence of the Frank-Wolfe algorithm.} SIAM J. Contr.~\textbf{6} (1968), 509-516

\bibitem{CoTh80} D. Cook, L.A. Thibodeau: \emph{Marginally restricted D-optimal designs.} Amer. Statist. Assoc.~\textbf{75} (1980), 366-371

\bibitem{CoFe95} D. Cook, V.V. Fedorov: \emph{Constrained optimization of experimental design.} Statistics \textbf{26} (1995), 129-178

\bibitem{DaGiMa09} P. Daniele, S. Giuffr\'{e}, A. Maugeri: \emph{Remarks on general infinite dimensional duality with cone and equality constraints.} Comm. Appl. Math.~\textbf{13} (2009), 567-578

\bibitem{Do11} M.B. Donato: \emph{The infinite dimensional Lagrange multiplier rule for convex optimization problems.} J. Funct. Anal.~\textbf{261} (2011), 2083-2093

\bibitem{DuAt25} B.P.M. Duarte, A.C. Atkinson:  \emph{Algorithms for optimal design of experiments.} In: M. Lovric (ed.), International Encyclopedia of Statistical Science. Springer (2025) 

\bibitem{DupuisEllis} P. Dupuis, R.S. Ellis: A weak convergence approach to the theory of large deviations. Wiley (1997)

\bibitem{Fe} V.V. Fedorov: Theory of optimal experiments. Academic Press (1972)

\bibitem{Fe89} V.V. Fedorov: \emph{Optimal design with bounded density: optimization algorithms of the exchange type. J. Statist. Plann. Inference~\textbf{22} (1989), 1-13}

\bibitem{FeLe} V.V. Fedorov, S.L. Leonov: Optimal design for nonlinear response models. Chapman \& Hall (2014)

\bibitem{Ga86} A. Gaivoronski: \emph{Linearization methods for optimization of functionals which depend on probability measures.} In: A. Pr\'{e}kopa, R.J.B. Wets (eds.): Stochastic Programming 84 Part II. Mathematical Programming Studies~\textbf{28} (1986), 157-181

\bibitem{HaFiRi20} R. Harman, L. Filov\'{a}, P. Richt\'{a}rik: \emph{A ranomized exchange algorithm for computing optimal approximate designs of experiments.} J. Am. Statist. Assoc.~\textbf{115} (2020), 348-361

\bibitem{HeLe20} R. Herzog, E. Legler: \emph{First-order methods for optimal experimental design problems with bound constraints.} arXiv:2004.08084 (2020)

\bibitem{HuHs93} M.L. Huang, M.C. Hsu: \emph{Marginally restricted linear-optimal designs.} J. Statist. Plann. Inference~\textbf{35} (1993), 251-266

\bibitem{Ja13} M. Jaggi: \emph{Revisiting Frank–Wolfe: projection-free sparse convex optimization.} 
Proceedings of the 30th International Conference on Machine Learning (ICML), PMLR (2013), 427-435

\bibitem{Jahn} J. Jahn: Introduction to the theory of nonlinear optimization. 3rd edition, Springer (2007)

\bibitem{Ki74} J. Kiefer: \emph{General equivalence theory for optimum designs (approximate theory).} Ann. Stat.~\textbf{2} (1974), 849-879

\bibitem{LaJa15} S. Lacoste-Julien, M. Jaggi: \emph{On the global linear convergence of Frank-Wolfe optimization variants.} Advances in Neural Information Processing Systems~\textbf{28} (2015), 496-504

\bibitem{Mangasarian} O.L. Mangasarian: Nonlinear programming. McGraw-Hill (1994)

\bibitem{MagnusNeudecker} J.R. Magnus, H. Neudecker: Matrix differential calculus with applications in statistics and econometrics. 3rd edition, Wiley (2007)

\bibitem{MoZu02} I. Molchanov, S. Zuyev: \emph{Steepest descent algorithms in a space of measuers.} Stat. Comp.~\textbf{12} (2002), 115-123 

\bibitem{MoZu04} I. Molchanov, S. Zuyev: \emph{Optimisation in space of measures and optimal design.} ESAIM Prob. Stat.~\textbf{8} (2004), 12-24

\bibitem{Parthasarathy} K.R. Parthasarathy: Probability measures on metric spaces. Academic Press (1967)

\bibitem{Pr04} L. Pronzato: \emph{A minimax equivalence theorem for optimum bounded design measures.} Stat. Prob. Lett.~\textbf{68} (2004), 325-331

\bibitem{PrPa} L. Pronzato, A. P\'{a}zman: Design of experiments in nonlinear models. Asymptotic normality, optimality criteria and small-sample properties. Springer (2013)

\bibitem{Pu} F. Pukelsheim: Optimal design of experiments. SIAM Classics of Applied Mathematics (2006)

\bibitem{Ro76} S.M. Robinson: \emph{Stability theory for systems of inequalities in nonlinear programming, part II: differentiable nonlinear systems.} SIAM J. Numer. Anal.~\textbf{13} (1976), 497-513

\bibitem{Rockafellar} R.T. Rockafellar: Convex analysis. Princeton University Press (1970)

\bibitem{Sc11} K. Schittkowski: \emph{A robust implementation of a sequential quadratic programming algorithm with successive error restoration.} Opt. Lett.~\textbf{5} (2011), 283–296

\bibitem{Sc14} K. Schittkowski: \emph{NLPQLP: A Fortran implementation of a sequential quadratic programming algorithm with distributed non-monotone line search.} User’s guide, Version 4.2 (2014)

\bibitem{ScSeBo} J. Schmid, P. Seufert, M. Bortz: \emph{Adaptive discretization algorithms for locally optimal experimental design.} arXiv:2406.01541 (2024)

\bibitem{SiTiTo78} S.D. Silvey, D.H. Titterington, B. Torsney: \emph{An algorithm for optimal designs on a design space.} Comm. Statist. Theory Methods~\textbf{14} (1978), 1379-1389

\bibitem{Silvey} S.D. Silvey: Optimal design. An introduction to the theory for parameter estimation. Chapman \& Hall (1980)

\bibitem{UcBo04} D. Uci\'{n}ski, B. Bogacka: \emph{T-optimum designs for multiresponse dynamic heteroscedastic models.} In: A. Di Bucchianico, H. L\"auter, H.P. Wynn (eds.): Contributions to statistics. Springer (2004), 191-199

\bibitem{UcBo05} D. Uci\'{n}ski, B. Bogacka: \emph{T-optimum designs for discrimination between two multiresponse dynamic models.} J. R. Statist. Soc. B~\textbf{67} (2005), 3-18

\bibitem{VaSe21} C. Vanaret, P. Seufert, J. Schwientek, G. Karpov, G. Ryzhakov, I. Oseledets, N. Asprion, M. Bortz:  \emph{Two-phase approaches to optimal model-based design of experiments: how many experiments and which ones?} Comp. Chem. Eng.~\textbf{146} (2021), 107218

\bibitem{SciPy} P. Virtanen, R. Gommers, T.E. Oliphant, M. Haberland, T. Reddy, D. Cournapeau, E. Burovski, P. Peterson, W. Weckesser, J. Bright, S.J. van der Walt, M. Brett, J. Wilson, K.J. Millman, N. Mayorov, A.R.J. Nelson, E. Jones, R. Kern, E. Larson, C.J. Carey, {\.I}. Polat, Y. Feng, E.W. Moore, J. VanderPlas, D. Laxalde, J. Perktold, R. Cimrman, I. Henriksen, A.E. Quintero, C.R. Harris, A.M. Archibald, A. Ribeiro, F. Pedregosa, P. van Mulbregt,  SciPy 1.0 Contributors: \emph{SciPy 1.0: Fundamental algorithms for scientific computing in Python.} Nature Methods~\textbf{17} (2020), 261-272

\bibitem{Wy70} H.P. Wynn: \emph{The sequential generation of D-Optimal experimental designs.} Ann. Math.  Statist.~\textbf{41} (1970), 1655-1664

\bibitem{YaBiTa13} M. Yang, S. Biedermann, E. Tang: \emph{On optimal designs for nonlinear models: a general and efficient algorithm.} J. Am. Statist. Assoc.~\textbf{108} (2013), 1411-1420

\bibitem{Yu10} Y. Yu: \emph{Monotonic convergence of a general algorithm for computing optimal designs.} The Annals of Statistics~\textbf{38} (2010), 1593

\bibitem{Yu11} Y. Yu: \emph{D-optimal designs via a cocktail algorithm.} Stat. Comp.~\textbf{21} (2011), 475-481

\bibitem{ZoKu79} J. Zowe, S. Kurcyusz: \emph{Regularity and stability for the mathematical programming problem in Banach spaces.} Appl. Math. Optim.~\textbf{5} (1979), 49-62

\end{thebibliography}
\end{document}